\newtheorem{thm}{Theorem}[section]
\newtheorem*{thm*}{Theorem}
\newtheorem{lemma}[thm]{Lemma}
\newtheorem{prop}[thm]{Proposition}
\newtheorem{cor}[thm]{Corollary}
\newtheorem{defi}[thm]{Definition}
\newtheorem{ques}[thm]{Question}
\theoremstyle{remark}
\newtheorem{rmk}[thm]{Remark}
\numberwithin{figure}{section}
\numberwithin{equation}{section}
\newcommand\restr[2]{{
  \left.\kern-\nulldelimiterspace 
  #1 
  \vphantom{\big|} 
  \right|_{#2} 
  }}
\title{Double branched covers of knotoids}
\author[A. Barbensi, D. Buck, H. A. Harrington, M. Lackenby]{Agnese Barbensi, Dorothy Buck, \\ Heather A. Harrington, Marc Lackenby}
\address{AB, HAH, ML: Mathematical Institute, University of Oxford, Oxford, UK.}
\address{DB: Department of Mathematical Sciences, University of Bath, Bath, UK and Mathematics/Biology, Trinity College of Arts \& Sciences, Duke University, Durham, NC, USA.}
\begin{document}

\begin{abstract}
By using double branched covers, we prove that there is a $1$-$1$ correspondence between the set of knotoids in $S^2$, up to orientation reversion and rotation, and knots with a strong inversion, up to conjugacy. This correspondence allows us to study knotoids through tools and invariants coming from knot theory. In particular, concepts from geometrisation generalise to knotoids, allowing us to characterise reversibility and other properties in the hyperbolic case. Moreover, with our construction we are able to detect both the trivial knotoid in $S^2$ and the trivial knotoid in $D^2$. 
 
\end{abstract}

\maketitle
\section{Introduction}
Knotoids were recently defined by V.Turaev \cite{turaev} as a generalisation of knots in $S^3$. More precisely, knotoids are defined as equivalence classes of diagrams of oriented arcs in $S^2$ or in $D^2$ up to an appropriate set of moves and isotopies. Some examples of knotoids are shown in Figure \ref{fig:examplesofknotoids}
\begin{figure}[h]
\includegraphics[width=9cm]{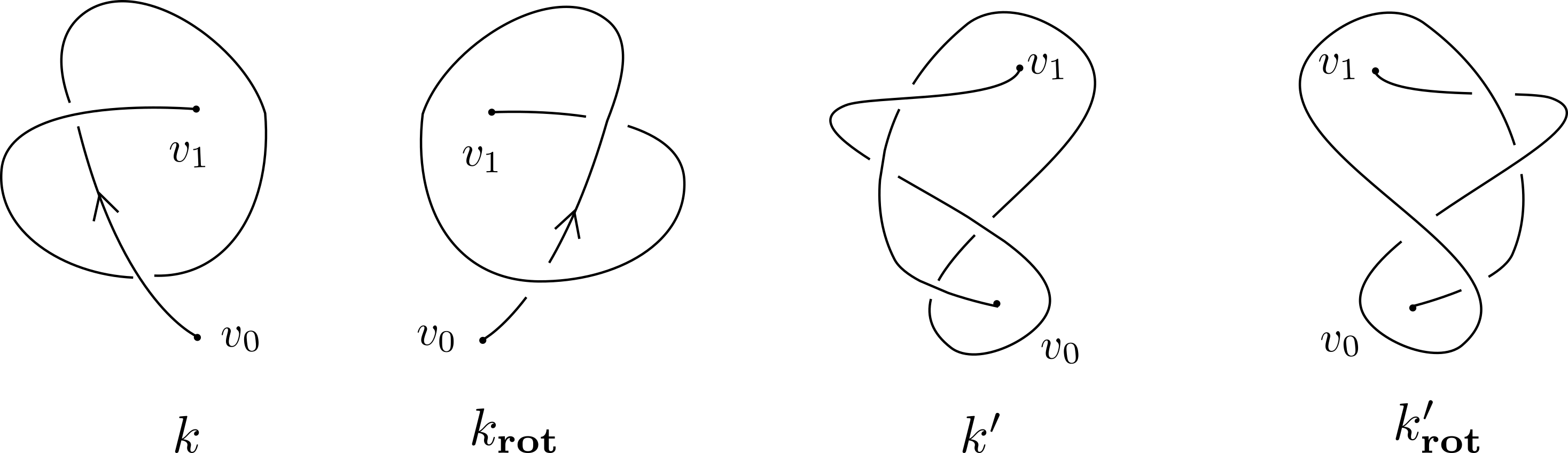}
\caption{Two knotoids $k$, $k'$ and their rotations $k_{\textbf{rot}}$, $k'_{\textbf{rot}}$. }
\label{fig:examplesofknotoids}
\end{figure}

Like knots, knotoids admit natural involutive operations such as mirror reflection and reversion. In addition, it is possible to define a further modification for knotoids called \emph{rotation}, formally defined in Section \ref{sec:Preliminares}. As an example, we show two knotoids $k$ and $k'$ with their corresponding rotations $k_{\textbf{rot}}$ and $k'_{\textbf{rot}}$ in Figure \ref{fig:examplesofknotoids}. Several invariants for knotoids have been adapted from classical knot theory, such as various versions of the bracket polynomial (see \emph{e.g} \cite{turaev}, \cite{knotoids}), and there are several well defined maps that associate a classical knot to a knotoid (see \emph{e.g} \cite{turaev}, \cite{knotoids}, \cite{knotoidstorus}). Often, non-equivalent knotoids share the same image under these maps, and it is possible to exhibit examples of non-equivalent knotoids with the same bracket polynomials. 

Here we present a complete invariant, that associates a knot to a knotoid through a double branched cover construction. Knotoids admit a $3$-dimensional interpretation as equivalence classes of embedded oriented arcs in $\mathbb{R}^3$ with endpoints lying on two fixed vertical lines. In this setting, given a knotoid $k$, its preimage in the double cover of $\mathbb{R}^3$ branched along these lines is a simple closed curve, that can be viewed as giving a classical knot $K$ in $S^3$. By construction, knots arising as pre-images of knotoids are \emph{strongly invertible}, that is, there exists an involution $\tau$ of $S^3$ mapping the knot to itself, preserving the orientation of $S^3$ and reversing the one of the knot. We exploit properties of strongly invertible knots to prove our main result in Section \ref{sec:strong}.

\begin{thm}\label{thm:completo}
There is a $1$-$1$ correspondence between unoriented knotoids, up to rotation, and knots $K$ with a strong inversion $\tau$, up to conjugacy. 
\end{thm}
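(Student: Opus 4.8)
The plan is to make the map ``knotoid $\mapsto$ strongly invertible knot'' of the Introduction explicit through the double branched cover, show it descends to the stated quotients, and produce an inverse; the essential input will be the positive solution of the Smith conjecture together with Waldhausen's classification of involutions of $S^3$, which together force the deck transformation of the branched cover to be standard and make the construction reversible. Concretely, using the $3$-dimensional interpretation of Section~\ref{sec:Preliminares} I would represent a knotoid $k$ in $S^2$ by an embedded arc $\alpha\subset S^3$ whose two endpoints lie on a fixed unknot $U$ and which otherwise misses $U$. Let $p\colon\Sigma\to S^3$ be the double cover branched over $U$; since $U$ is unknotted, $\Sigma\cong S^3$ and the deck involution $\tau$ is rotation by $\pi$ about the round circle $\mathrm{Fix}(\tau)=p^{-1}(U)$. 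Because each endpoint of $\alpha$ lies on the branch locus, the preimage $K:=p^{-1}(\alpha)$ is a single embedded circle meeting $\mathrm{Fix}(\tau)$ in exactly two points, and $\tau$ carries $K$ to itself reversing its orientation; thus $(K,\tau)$ is a knot with a strong inversion, and I set $\Phi(k):=[(K,\tau)]$, its conjugacy class.

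\textbf{Well-definedness and the quotients.} Next I would check $\Phi$ is constant on equivalence classes: an isotopy of $\alpha$ in $(S^3,U)$ extends to an ambient isotopy of $S^3$ preserving $U$, which lifts to a $\tau$-equivariant ambient isotopy of $\Sigma$, yielding a conjugacy of the resulting strongly invertible knots. The pair $(K,\tau)$ does not see the orientation of $\alpha$ (reversing $\alpha$ merely reverses $K$, which is immaterial since $\tau$ reverses $K$ in any case), so $\Phi$ factors through \emph{unoriented} knotoids; and rotation of the knotoid is induced by an orientation-preserving symmetry of the model $(S^3,U)$ (identified in Section~\ref{sec:Preliminares}), which descends to an orientation-preserving diffeomorphism of $\Sigma$ and hence contributes only a conjugacy, so $\Phi$ further factors through the quotient by rotation. (The mirror operation corresponds to an orientation-reversing symmetry of $(S^3,U)$, consistent with our not quotienting by it.)

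\textbf{The inverse and the two composites.} For injectivity and surjectivity I would construct an inverse $\Psi$. Given a knot $K$ with a strong inversion $\tau$: by the Smith conjecture $\mathrm{Fix}(\tau)$ is an unknot, and by Waldhausen's theorem $\tau$ is conjugate to the standard involution, so $S^3/\tau\cong S^3$ with branch locus the unknot $\bar U=\mathrm{Fix}(\tau)/\tau$; since $\tau$ reverses $K$, the map $\tau|_K$ is a reflection of the circle $K$ with fixed points $K\cap\mathrm{Fix}(\tau)$, so $\bar\alpha:=K/\tau$ is an embedded arc in $S^3/\tau$ with both endpoints on $\bar U$ and otherwise disjoint from it. Identifying $(S^3/\tau,\bar U)$ with the model $(S^3,U)$ turns $\bar\alpha$ into the arc of a knotoid; two identifications differ by the orientation-preserving mapping class group of $(S^3,U)$, so the knotoid is well defined precisely up to reversal and rotation, and this is $\Psi([(K,\tau)])$. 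Finally I would chase the definitions: passing to the branched cover and back to the quotient recovers $(S^3,U,\alpha)$ up to the permitted symmetries, giving $\Psi\circ\Phi=\mathrm{id}$, while $p^{-1}(\bar\alpha)=K$ with the deck involution $\tau$ (the branched covering $S^3\to S^3/\tau$ is exactly $p$) gives $\Phi\circ\Psi=\mathrm{id}$.

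\textbf{Main obstacle.} The real content lies in the inverse direction: canonically reconstructing the picture downstairs requires knowing that $\mathrm{Fix}(\tau)$ is unknotted and that $\tau$ is the standard involution --- precisely where the Smith conjecture and Waldhausen's classification of involutions of $S^3$ are indispensable, with no elementary substitute. The remaining genuine task is the symmetry bookkeeping: verifying that the ambiguity in the reconstruction is exactly ``reversal together with rotation'', i.e.\ identifying the orientation-preserving mapping class group of $(S^3,U)$ with the group these two knotoid operations generate, and matching the generators correctly.
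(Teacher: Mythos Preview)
Your construction is the paper's: the forward map is the double branched cover together with its deck involution, the inverse passes through the Smith conjecture to obtain an arc with endpoints on an unknot in the quotient $S^3$, and the two composites are checked directly. The only substantive difference is in the bookkeeping you flag as the ``remaining genuine task''. The paper does not compute a mapping class group; it routes everything through Turaev's bijection $t\colon\mathbb{K}(S^2)\to\Theta^s$ with labelled simple $\theta$-curves (Theorem~\ref{thm:semigroupsiso}) and its quotient $t_\approx$: the data downstairs is the $\theta$-curve $p(\mathrm{Fix}(\tau))\cup p(K)$ with $e_0=p(K)$ distinguished but the vertex labels and the $e_\pm$ labels free, and the four labellings correspond exactly to $k,\,-k,\,k_{\textbf{rot}},\,-k_{\textbf{rot}}$ by the discussion in Section~\ref{miserve2}.

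Your proposed route via the orientation-preserving mapping class group of $(S^3,U)$ does not work as stated. That group is only $\mathbb{Z}_2$ (the unknot exterior is a solid torus, whose orientation-preserving mapping class group is $\mathbb{Z}_2$), not $\mathbb{Z}_2\times\mathbb{Z}_2$; and since isotopies of the pair $(S^3,U)$ slide points freely along $U$, this mapping class group does not act in a well-defined way on vertex- or edge-labels, so it cannot by itself account for the reversal/rotation ambiguity. The fix is to carry the two marked points (the endpoints of $\bar\alpha$) as part of the structure being identified with the model, which is exactly the $\theta$-curve language the paper uses; once you do this the $\mathbb{Z}_2\times\mathbb{Z}_2$ of label choices is the correct ambiguity and matches reversal and rotation as in Section~\ref{miserve2}.
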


As a corollary of Theorem \ref{thm:completo}, we have the following.

\begin{cor}\label{cor:completo2}
Given any torus knot $K_t$ there is exactly $1$ knotoid associated to it, up to rotation and reversion. Given any strongly invertible hyperbolic knot $K_h$ there either are either $1$ or $2$ knotoids associated to it up to reversion and rotation, depending on whether or not $K_h$ is periodic with period $2$. In general, given any strongly invertible knot $K$ there are only finitely many knotoids associated to it. 
\end{cor}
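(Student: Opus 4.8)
The plan is to reduce, via Theorem~\ref{thm:completo}, to a counting problem about strong inversions, and then to read off the three assertions from the structure of symmetry groups of knots furnished by geometrisation. By Theorem~\ref{thm:completo}, the knotoids associated with a knot $K$, taken up to rotation and reversion, are in bijection with the strong inversions on $K$ up to (orientation-preserving) conjugacy, so in each case it suffices to count conjugacy classes of strong inversions on $K$.

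The soft parts come first. For any knot $K$ the group $\pi_0\mathrm{Diff}(S^3,K)$ is finite --- immediate for the unknot, a consequence of Mostow rigidity when $S^3\setminus K$ is hyperbolic and of the Seifert-fibred structure of the complement when $K$ is a torus knot, and in general a consequence of the JSJ decomposition (the symmetry group acts on the finite JSJ graph with finite vertex stabilisers) together with uniqueness of the prime decomposition for composite $K$; since strong inversions are involutions in this finite group, only finitely many occur up to conjugacy, which gives the general finiteness statement. For a torus knot $K_t$ one knows moreover that there is a unique strong inversion up to conjugacy --- for a nontrivial torus knot because $\pi_0\mathrm{Diff}(S^3,K_t)\cong\mathbb{Z}/2$, generated by a strong inversion, and the unknot is checked directly --- so there is exactly one associated knotoid up to rotation and reversion.

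For a hyperbolic knot $K_h$, let $G=\pi_0\mathrm{Diff}^+(S^3,K_h)$ be the group of orientation-preserving symmetries, which is finite by Mostow rigidity. Using the known fact that the symmetry group of a hyperbolic knot complement is cyclic or dihedral, the subgroup $G_0\leq G$ of symmetries preserving the orientations of both $S^3$ and $K_h$ is cyclic, of order $q$ say, and --- assuming $K_h$ carries a strong inversion $\tau$ --- the group $G$ is the dihedral group $G_0\rtimes\langle\tau\rangle$ (the cyclic alternative forces $q$ odd and a unique strong inversion). The strong inversions are exactly the reflections of this dihedral group, and reflections of $D_q$ form a single conjugacy class if $q$ is odd and two classes if $q$ is even. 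Finally $q$ is even precisely when $G_0$ contains an involution, which --- being an orientation-preserving involution of $S^3$ fixing $K_h$ together with its orientation --- is a period-$2$ symmetry with axis disjoint from $K_h$; that is, $q$ is even exactly when $K_h$ is $2$-periodic, giving the stated count of $1$ or $2$.

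The hyperbolic case is where the real work lies, and within it the delicate point is that conjugacy must be taken with respect to orientation-preserving diffeomorphisms only: one has to confirm that in the $2$-periodic case the two conjugacy classes genuinely yield inequivalent knotoids, i.e.\ that no symmetry --- in particular no amphichiral symmetry of $S^3$ --- merges them. I expect this to require controlling the action of the full symmetry group on the two axes of the strong inversions, for example via their linking numbers with the axis of the period-$2$ symmetry, or via the induced action on the first homology of the branched covers constructed in Section~\ref{sec:strong}. The torus-knot and finiteness statements are, by comparison, routine.
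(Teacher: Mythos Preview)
Your approach is essentially the paper's own: reduce via Theorem~\ref{thm:completo} to counting conjugacy classes of strong inversions, then invoke the relevant facts about symmetry groups. The paper simply cites Sakuma's result (stated here as Theorem~\ref{thm:hyper}) for the torus and hyperbolic counts and Kojima's finiteness result for the general statement; you instead re-derive Sakuma's count from the dihedral structure of $\mathrm{Sym}^+(S^3,K_h)$, which is correct and perfectly reasonable.

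Your last paragraph, however, is worrying about a non-issue. By Definition~\ref{defi:strongknots}, equivalence of strongly invertible knots is already conjugacy in $\mathrm{Sym}^+(S^3,K)$ --- orientation-preserving diffeomorphisms only. So once you have shown, as you do in your third paragraph, that the reflections of the dihedral group $G$ fall into two $G$-conjugacy classes when $q$ is even, you are done: Theorem~\ref{thm:completo} converts this directly into two inequivalent knotoids up to rotation and reversion. No amphichiral symmetry enters the equivalence relation, so there is nothing further to check. (The paper does discuss the amphichiral case later, in Section~\ref{sec:inam}, but only to observe the extra phenomenon that the two knotoids are then mirrors of one another --- not to rule out a collapse.)

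One small imprecision: when you say the involution in $G_0$ is ``a period-$2$ symmetry with axis disjoint from $K_h$'', you are describing only cyclic period $2$. The involution in $G_0$ could instead act freely on $S^3$ (conjugate to the antipodal map), giving free period $2$; the paper's statement of Theorem~\ref{thm:hyper} explicitly allows both. Your dichotomy ``$q$ even $\Leftrightarrow$ period $2$'' is correct once you include the free case.
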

In particular, Corollary \ref{cor:completo2} implies that there are at most $4$ oriented knotoids associated to every torus knot, and at most $8$ associated to every hyperbolic knot. In fact, we will obtain more precise information about these numbers of knotoids in Corollary \ref{cor:completo3} below. We emphasise that Theorem \ref{thm:completo} and Corollary \ref{cor:completo2} provide a powerful link between knot theory and knotoids, allowing us to borrow all the sophisticated tools developed to distinguish knots to study knotoids. In particular, we can extend the concepts from geometrisation to knotoids: we will call \emph{hyperbolic} (respectively \emph{torus}) knotoids those lifting to hyperbolic (respectively torus) knots. Even if our invariant cannnot distinguish between a knotoid $k$, its reverse $-k$ and its rotation $k_{\textbf{rot}}$, in the case of hyperbolic knotoids these symmetries are completely characterised. A knotoid $k$ is called \emph{reversible} (respectively \emph{rotatable}) if it is equivalent to $-k$ (respectively $k_{\textbf{rot}}$).
 
\begin{thm}\label{llll}
A hyperbolic, oriented knotoid $k \in \mathbb{K}(S^2) $ is reversible if and only if its double branched cover has cyclic period $2$. Analogously, it is equivalent to the reverse of its rotation if and only if its double branched cover has free period $2$.
\end{thm}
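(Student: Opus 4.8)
The plan is to reduce both equivalences to the structure of the (finite) symmetry group of the underlying hyperbolic knot. Write $\mathrm{Sym}^{+}(S^{3},K)$ for the group of isotopy classes of orientation-preserving diffeomorphisms of $S^{3}$ carrying $K$ to itself; by Mostow rigidity applied to the hyperbolic manifold $S^{3}\setminus K$ this group is finite, and for a hyperbolic knot it is cyclic or dihedral. It contains the strong inversion $\tau$ produced by the double branched cover, whose fixed-point set is an unknot $U=\mathrm{Fix}(\tau)$ meeting $K$ in exactly the two points $a,b$ that lie over the endpoints of the knotoid arc. The first step is to record the dictionary between the operations on the oriented knotoid $k$ and symmetries of the triple $(S^{3},K,\tau)$ that is implicit in the proof of Theorem \ref{thm:completo}: an equivalence $k\simeq -k$ is realised by an orientation-preserving diffeomorphism $g$ of $S^{3}$ with $g(K)=K$, $g\tau g^{-1}=\tau$ (so $g$ commutes with $\tau$, as $\langle\tau\rangle$ has order two), $g$ interchanging $a$ and $b$, and $g|_{U}$ reversing the orientation of $U$; an equivalence $k\simeq -k_{\textbf{rot}}$ is realised by such a $g$ with $g|_{U}$ instead preserving the orientation of $U$, equivalently acting freely on $U$. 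In both cases $g|_{K}$ is orientation-reversing, so $g$ is itself a strong inversion commuting with $\tau$.

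Granting this dictionary, the ``if'' directions follow from one construction. Suppose $K$ admits a period-two symmetry $\rho$, cyclic or free. Then $\rho|_{K}$ is a free involution of the circle $K$, i.e.\ a $\pi$-rotation, so $\rho$ preserves the orientation of $K$; hence $\rho$ is not one of the reflections of the dihedral group, so it lies in its cyclic subgroup; and since $\mathrm{Sym}^{+}(S^{3},K)$ contains the two distinct involutions $\rho$ and $\tau$ it cannot be cyclic, so $\mathrm{Sym}^{+}(S^{3},K)=D_{n}$ with $n$ even and $\rho$ its unique central involution. In particular $\rho$ commutes with $\tau$, so $g:=\rho\tau$ fixes $K$ setwise, commutes with $\tau$, and is orientation-reversing on $K$ (as $\rho$ preserves and $\tau$ reverses the orientation of $K$); thus $g$ is a strong inversion. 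Because $\tau|_{U}=\mathrm{id}_{U}$ we have $g|_{U}=\rho|_{U}$, and $\rho$ interchanges $a$ and $b$ (it preserves $U\cap K=\{a,b\}$, and cannot fix $a\in K$ since $\rho|_{K}$ is fixed-point free). So the remaining question is whether $\rho|_{U}$ reverses or preserves the orientation of $U$.

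This last point is the crux and the main obstacle; it concerns the orientation-preserving action of $\langle\tau,\rho\rangle\cong(\mathbb{Z}/2)^{2}$ on $S^{3}$, whose three nontrivial elements are $\tau$ (fixed set the circle $U$, which meets $K$), $\rho$ (fixed set empty if $\rho$ is a free period, an unknotted circle disjoint from $K$ if $\rho$ is a cyclic period), and $g=\tau\rho$ (fixed set an unknotted circle $U_{g}$ meeting $K$, since $g$ is a strong inversion). I would use the fact that, for an orientation-preserving $(\mathbb{Z}/2)^{2}$-action on $S^{3}$, a nontrivial element acts freely precisely when the fixed circles of the other two are disjoint, and otherwise the fixed sets of all three involutions pairwise intersect --- a statement one can extract from the classification of such actions (classical for $(\mathbb{Z}/2)^{2}$, in general a consequence of geometrisation), or, since $S^{3}\setminus K$ is hyperbolic, from the observation that a product of two $\pi$-rotations of $\mathbb{H}^{3}$ about distinct geodesics is an involution only when the geodesics meet orthogonally. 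If $\rho$ is a free period it acts freely on $S^{3}$, so $\rho|_{U}=g|_{U}$ is fixed-point free and, by the dictionary, $k\simeq -k_{\textbf{rot}}$. If $\rho$ is a cyclic period, then none of $\tau,\rho,g$ acts freely, so $\mathrm{Fix}(\rho)$ meets $U$; hence $\rho|_{U}$ has a fixed point and, being an involution of a circle that interchanges $a$ and $b$, is a reflection and reverses the orientation of $U$, so $k$ is reversible.

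For the ``only if'' directions I would run this backwards. An equivalence $k\simeq -k$ (resp.\ $k\simeq -k_{\textbf{rot}}$) yields, via the dictionary, a strong inversion $g$ commuting with $\tau$, interchanging $a$ and $b$, with $g|_{U}$ orientation-reversing (resp.\ orientation-preserving). Put $\rho:=g\tau$: an orientation-preserving involution of $S^{3}$ commuting with $\tau$, with $\rho|_{K}$ the product of the two orientation-reversing involutions $g|_{K}$ and $\tau|_{K}$ of the circle $K$. As $g$ interchanges $a,b=\mathrm{Fix}(\tau|_{K})$ while $\mathrm{Fix}(g|_{K})=U_{g}\cap K$ avoids $\{a,b\}$, these reflections share no fixed point, so $\rho|_{K}$ is a free $\pi$-rotation; hence $\mathrm{Fix}(\rho)\cap K=\emptyset$ and, by Smith theory and Waldhausen's theorem on involutions of $S^{3}$, $\mathrm{Fix}(\rho)$ is empty or an unknotted circle, so $\rho$ is a period-two symmetry of $K$. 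It is cyclic exactly when $\mathrm{Fix}(\rho)\neq\emptyset$; since $\rho|_{U}=g|_{U}$, this holds when $g|_{U}$ is orientation-reversing (a reflection has fixed points), i.e.\ in the reversible case, while in the $-k_{\textbf{rot}}$ case $g|_{U}$ is free, so $\mathrm{Fix}(\rho)\cap U=\emptyset$, and the $(\mathbb{Z}/2)^{2}$-fact applied to $\langle\tau,\rho\rangle$ (were $\mathrm{Fix}(\rho)$ nonempty, none of its involutions would be free, forcing the fixed circles to meet) gives $\mathrm{Fix}(\rho)=\emptyset$, i.e.\ $\rho$ is a free period. The genuinely substantial inputs are thus the dictionary of the first step --- where Theorem \ref{thm:completo} and the geometry of the branched cover enter --- and the lemma on $(\mathbb{Z}/2)^{2}$-actions; everything else is bookkeeping with involutions of circles.
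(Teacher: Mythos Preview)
Your argument is correct and takes a genuinely different route from the paper. The paper works downstairs with the $\theta$-curve $t(k)$: it shows (Lemma~\ref{propp}) that reversibility corresponds to a symmetry of $t(k)$ preserving each edge and swapping the vertices, upgrades this to an order-two homeomorphism via Mostow rigidity on the hyperbolic orbifold $(S^{3}\setminus N(\gamma_{S}(k)))/\tau$ (Proposition~\ref{lemma:xxx}), and then invokes a result of Kodama--Sakuma (Theorem~\ref{thm:thetaandinvertible}) identifying such $\theta$-curve involutions with cyclic versus free period $2$ of the knot. You instead lift everything to the strongly invertible knot $(K,\tau)$ and argue directly with the cyclic/dihedral structure of $\mathrm{Sym}^{+}(S^{3},K)$ together with a lemma on fixed-point sets of an orientation-preserving $(\mathbb{Z}/2)^{2}$-action on $S^{3}$. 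In effect you reprove the relevant content of Kodama--Sakuma rather than quoting it, which makes your treatment more self-contained; the paper's route is shorter but outsources the heart of the matter to an external reference.

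One step deserves to be made explicit. Your dictionary only hands you a finite-order element $g\in\mathrm{Sym}^{+}(S^{3},K)$ commuting with $\tau$ and reversing the orientation of $K$; you then assert that $g$ is a strong inversion, and later that $\rho=g\tau$ is an involution, both of which require $g^{2}=1$. This does follow, but not just from ``$g|_{K}$ is orientation-reversing'': the point is that the action of the finite group $\mathrm{Sym}^{+}(S^{3},K)$ on the circle $K$ is faithful (a nontrivial isometry of the hyperbolic manifold $S^{3}\setminus K$ cannot fix $K$ pointwise), and any finite-order orientation-reversing homeomorphism of $S^{1}$ has order exactly two. Equivalently, in the dihedral picture the elements reversing $K$ are precisely the reflections. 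With this line added, your construction of $\rho$ as an honest period-two symmetry, and the subsequent $(\mathbb{Z}/2)^{2}$ analysis separating the cyclic and free cases, goes through cleanly.
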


Furthermore, hyperbolic knotoids are never rotatable.

\begin{thm}\label{marcrot}
A hyperbolic knotoid is never rotatable.
\end{thm}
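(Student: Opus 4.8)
The plan is to pass to the double branched cover and show that a rotatable hyperbolic knotoid would carry a symmetry that a hyperbolic knot cannot admit. Let $k$ be a hyperbolic knotoid and let $(K,\tau)$ be its double branched cover, so that $K\subset S^3$ is a hyperbolic knot, $\tau$ is a strong inversion, and $\mathrm{Fix}(\tau)$ is an unknot $U$ meeting $K$ transversely in two points $\tilde p,\tilde q$. Recall from the three-dimensional interpretation that $k$ is read off from the quotient $S^3/\tau\cong(S^3,U)$: the image of $K$ is an arc $a$ with $\partial a=\{p,q\}\subset U$ (the images of $\tilde p,\tilde q$), and the two subarcs $\gamma,\delta$ of $U\smallsetminus\{p,q\}$ record exactly the difference between $k$ and $k_{\textbf{rot}}$. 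The first step is to make this precise: I claim that $k\simeq k_{\textbf{rot}}$ is equivalent to the existence of an orientation-preserving self-homeomorphism $g$ of $(S^3,U)$ with $g(a)=a$, $g(p)=p$ and $g(q)=q$ — so that $g$ preserves the orientation of the arc $a$ — but with $g(\gamma)=\delta$; in particular $g\neq\mathrm{id}$. This is precisely the feature distinguishing rotation from reversion (which interchanges $p$ and $q$) and from $k\mapsto -k_{\textbf{rot}}$, which is why the conclusion here differs from that of Theorem \ref{llll}.

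Next I would lift $g$ through the branched covering $S^3\to S^3/\tau$, choosing the lift $\rho$ that preserves the orientation of $K$ (the two lifts differ by $\tau$, which reverses it). Then $\rho$ is an orientation-preserving diffeomorphism of $S^3$ commuting with $\tau$, with $\rho(K)=K$, preserving the orientation of $K$, fixing the point $\tilde p$ (the unique preimage of $p$), and $\rho\neq\mathrm{id}$ (it projects to $g\neq\mathrm{id}$, while $\tau$ projects to the identity). Since $K$ is hyperbolic, $\mathrm{Sym}(S^3,K)$ is finite, so $\rho$ has finite order.

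Finally I would derive the contradiction. As $\rho|_K$ is a finite-order, orientation-preserving self-homeomorphism of the circle $K$ with a fixed point $\tilde p$, it must be the identity, so $\rho$ fixes $K$ pointwise. Choosing a $\rho$-invariant Riemannian metric on $S^3$, $\rho$ is a nontrivial finite-order isometry, hence $\mathrm{Fix}(\rho)$ is a totally geodesic $1$-submanifold containing the circle $K$; thus the component of $\mathrm{Fix}(\rho)$ through $\tilde p$ is $K$ itself. Therefore a nontrivial finite cyclic group acts on $S^3$ with fixed-point set containing $K$, and the Smith conjecture (a consequence of geometrisation) forces $K$ to be the unknot, contradicting hyperbolicity. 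Hence no hyperbolic knotoid is rotatable.

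I expect the main obstacle to be the first step: faithfully translating the combinatorial operation of rotation through the double branched cover, i.e. verifying that rotation is realised by a self-homeomorphism that fixes both endpoints of $a$ (and its orientation) while interchanging $\gamma$ and $\delta$, and then checking that the chosen lift $\rho$ has all the asserted properties. Once that bookkeeping is in place, the topological conclusion is a routine application of geometrisation and of the finiteness of the symmetry group of a hyperbolic knot.
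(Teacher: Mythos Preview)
Your overall strategy matches the paper's: translate rotatability into a self-symmetry of the $\theta$-curve that swaps $e_-,e_+$ while fixing $e_0$ and both vertices, lift to the branched double cover, and contradict the Smith Conjecture. Your bookkeeping in the first two steps is correct (and, incidentally, the step you flagged as the likely obstacle is \emph{not} the problem --- it follows directly from the $\theta$-curve correspondence).

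The genuine gap is the sentence ``Since $K$ is hyperbolic, $\mathrm{Sym}(S^3,K)$ is finite, so $\rho$ has finite order.'' Finiteness of $\mathrm{Sym}(S^3,K)$ is a statement about the \emph{mapping class group}; it tells you only that the isotopy class $[\rho]$ has finite order, not that the diffeomorphism $\rho$ does. Replacing $\rho$ by a finite-order representative in its isotopy class destroys exactly the property you need next: the replacement has no reason to fix $\tilde p$, so you cannot conclude that its restriction to $K$ is the identity. Worse, since your $\rho$ preserves both the orientation of $S^3$ and of $K$, it acts trivially on the peripheral torus; for a hyperbolic knot the isometric representative of such a class may well be the identity, in which case there is no nontrivial finite-order map at all and the Smith argument evaporates. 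The later step ``choose a $\rho$-invariant Riemannian metric'' presupposes the finite order you have not established, so it cannot rescue the argument.

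The paper closes this gap by passing to the hyperbolic \emph{orbifold} $\mathcal{O}=(S^3\setminus N(K))/\tau$ before invoking rigidity. Your $g$ is a self-homeomorphism of $\mathcal{O}$; Mostow rigidity for orbifolds replaces it by an isometry $\bar h$, which is automatically of finite order and --- crucially --- still swaps the two singular arcs, because those are intrinsic to the orbifold structure. One then checks $\bar h^2=\mathrm{id}$ on the Euclidean pillowcase boundary and lifts $\bar h$ to an isometry $\phi$ of $S^3\setminus N(K)$; this $\phi$ is finite order by construction and retains the needed relation to $\mathrm{Fix}(\tau)$, so the Smith contradiction goes through. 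In short, the rigidity step has to be carried out \emph{downstairs}, where the branch locus is part of the structure, rather than upstairs in $\mathrm{Sym}(S^3,K)$, where it is forgotten.
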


As a consequence of these results we obtain the following.

\begin{cor}\label{cor:completo3}
Given any strongly invertible hyperbolic knot $K$ there are exactly $4$ oriented knotoids associated to it. Moreover, one of the following holds.
\begin{itemize}
\item If $K$ has cyclic period $2$, these are two inequivalent reversible knotoids $k^1$, $k^2$ and their rotations $k^1_{\textbf{rot}}$, $k^2_{\textbf{rot}}$;

 \item if $K$ has free period $2$, these are two inequivalent knotoids $k^1$, $k^2$ (each equivalent to the reverse of its rotation) and their reverses $-k^1$, $-k^2$;
 \item if $K$ does not have period $2$, these are a knotoid $k$, its reverse $-k$, its rotation $k_{\textbf{rot}}$ and its reverse rotation $-k_{\textbf{rot}}$. 
 \end{itemize}

\end{cor}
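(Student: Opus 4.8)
The plan is to deduce the statement from Corollary~\ref{cor:completo2} together with Theorems~\ref{llll} and~\ref{marcrot}, by organising the oriented knotoids associated to $K$ into orbits under the natural symmetries and then counting.

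First I would fix notation: let $\mathcal{O}$ be the set of equivalence classes of oriented knotoids whose double branched cover is the given strongly invertible hyperbolic knot $K$. Reversion $k\mapsto -k$ and rotation $k\mapsto k_{\textbf{rot}}$ are commuting involutions on knotoids (Section~\ref{sec:Preliminares}), and since the double branched cover is unchanged both by reversing the orientation of the arc and by the $\pi$-rotation interchanging the two branch lines, both operations preserve $\mathcal{O}$. Thus the Klein four-group $G=\langle\text{reversion},\text{rotation}\rangle$ acts on $\mathcal{O}$, which is nonempty because $K$ is strongly invertible; the orbit of a knotoid $k$ lies in $\{k,-k,k_{\textbf{rot}},-k_{\textbf{rot}}\}$, so it has size $1$, $2$ or $4$. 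Next I would identify the set of $G$-orbits: forgetting the orientation and then the rotation defines a map from $\mathcal{O}$ to the set of unoriented knotoids up to rotation associated to $K$, and I would check that its fibres are exactly the $G$-orbits. By Theorem~\ref{thm:completo} the target is in bijection with the conjugacy classes of strong inversions of $K$, and by Corollary~\ref{cor:completo2} there is one such class when $K$ has no period $2$ and there are two otherwise; hence $\mathcal{O}$ has one $G$-orbit in the first case and two in the second.

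Then I would record the constraints. By Theorem~\ref{marcrot} no element of $\mathcal{O}$ is rotatable, so no orbit contains a point with $k=k_{\textbf{rot}}$; in particular every orbit has size $2$ or $4$. By Theorem~\ref{llll}, applied to each element of $\mathcal{O}$ and to its rotation (all of which have double branched cover $K$), a knotoid in $\mathcal{O}$ is reversible exactly when $K$ has cyclic period $2$, and is equivalent to the reverse of its rotation exactly when $K$ has free period $2$; these two alternatives are mutually exclusive, since if both held for some $k$ then $k\sim -k$ and $k\sim -k_{\textbf{rot}}$ would force $k\sim k_{\textbf{rot}}$, contradicting Theorem~\ref{marcrot}. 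Hence an orbit that is not free collapses by exactly one of the identifications $k\sim -k$ (orbit $\{k,k_{\textbf{rot}}\}$, size $2$) or $k\sim -k_{\textbf{rot}}$ (orbit $\{k,-k\}$, size $2$), and which one occurs is dictated uniformly by the period-$2$ behaviour of $K$.

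Finally I would run the three cases. If $K$ has no period $2$: one orbit, which admits none of the three identifications and is therefore free, so $\mathcal{O}=\{k,-k,k_{\textbf{rot}},-k_{\textbf{rot}}\}$ — the third bullet. If $K$ has cyclic period $2$: two orbits, each reversible, each equal to $\{k^i,k^i_{\textbf{rot}}\}$ with $k^i\ne k^i_{\textbf{rot}}$ by Theorem~\ref{marcrot}, so $|\mathcal{O}|=4$ — the first bullet. If $K$ has free period $2$: two orbits, each satisfying $k^i_{\textbf{rot}}\sim -k^i$ and equal to $\{k^i,-k^i\}$ with $k^i\ne -k^i$ since $K$ is not cyclically periodic, so $|\mathcal{O}|=4$ — the second bullet. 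The step I expect to need the most care is the identification of the $G$-orbits with the count of Corollary~\ref{cor:completo2}: one must verify that the maps "forget orientation" and "forget rotation" have exactly the expected fibres, so that distinct unoriented-up-to-rotation knotoids cannot become identified once orientations are reintroduced, and conversely that the at most four knotoids listed in each orbit really exhaust the part of $\mathcal{O}$ lying over a given unoriented knotoid. Everything else is bookkeeping on top of Theorems~\ref{llll} and~\ref{marcrot} and the mutual exclusivity just noted.
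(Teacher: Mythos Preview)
Your proposal is correct and follows essentially the same route as the paper: both arguments combine the count of strong inversions from Corollary~\ref{cor:completo2} (equivalently Theorem~\ref{thm:hyper}) with Theorems~\ref{llll} and~\ref{marcrot} to pin down, in each of the three cases, which of the four knotoids $k,-k,k_{\textbf{rot}},-k_{\textbf{rot}}$ coincide. Your packaging via the Klein four-group action on $\mathcal{O}$ is a tidy way to organise the bookkeeping that the paper carries out by direct case analysis, and your derivation of the mutual exclusivity of cyclic and free period $2$ is exactly the content of the paper's Remark~\ref{rmk:onlyoneinvo}; the step you flag as needing care (identifying $G$-orbits with the classes counted in Corollary~\ref{cor:completo2}) is precisely the bijection of Theorem~\ref{thm:completo}, so no additional work is required there.
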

As a further corollary, we are able to count the number of inequivalent involutions in the symmetry group of some particular composite knot.

\begin{prop}\label{compositeknots}
Consider a knot $K$ isotopic to the connected sum of $\#_{i=1}^n K^i_h$, where $n \geq 2$ and every $K_h^i$ is a strongly invertible, hyperbolic knot. Suppose that these hyperbolic knots are pairwise distinct. Then, the number of non-equivalent strong involutions of $K$ is equal to $4^{n-1}(n!)$.
\end{prop}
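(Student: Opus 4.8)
The plan is to reduce the count to a count of knotoids. By Theorem~\ref{thm:completo}, the strong involutions of $K$ up to conjugacy are in bijection with the unoriented knotoids up to rotation whose double branched cover is $K$. Let $r$ and $\rho$ denote reversion and rotation of knotoids, which commute and square to the identity, and set $G=\langle r,\rho\rangle=\{1,r,\rho,r\rho\}$. An unoriented knotoid up to rotation is exactly a $G$-orbit on the set $\mathcal{S}$ of oriented knotoids $k$ with $\mathrm{dbc}(k)=K$, so it is enough to show that $\mathcal{S}$ has $4^{n}n!$ elements and that every $G$-orbit on $\mathcal{S}$ has exactly $4$ elements; the statement then follows.

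To understand $\mathcal{S}$ I would first record that the double branched cover carries the knotoid product to connected sum: in the three-dimensional model of knotoids as arcs with endpoints on two fixed lines, a product $k_{1}\cdot k_{2}$ is obtained by stacking, and the $2$-sphere separating the two levels lifts to a connected-sum sphere, so $\mathrm{dbc}(k_{1}\cdot k_{2})=\mathrm{dbc}(k_{1})\#\mathrm{dbc}(k_{2})$. For the converse, suppose $\mathrm{dbc}(k)=K\cong K^{1}_{h}\#\cdots\#K^{n}_{h}$; making the $n-1$ connected-sum spheres invariant under the associated strong inversion (equivariant sphere theorem) and pushing them down to the quotient $S^{3}$ writes $k$ as an ordered product $k=b_{1}\cdots b_{n}$ of prime knotoids. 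Since each $\mathrm{dbc}(b_{j})$ is a prime knot and $\#_{j=1}^{n}\mathrm{dbc}(b_{j})\cong K$, uniqueness of the prime decomposition of knots shows that $\mathrm{dbc}(b_{1}),\dots,\mathrm{dbc}(b_{n})$ is a reordering of $K^{1}_{h},\dots,K^{n}_{h}$, each summand occurring once because the $K^{i}_{h}$ are pairwise distinct. Moreover this ordered factorisation is unique, because the connected-sum spheres of a knotoid are unique up to isotopy --- which I would deduce from the corresponding uniqueness for $K$ transported through the double branched cover --- so the list $b_{1},\dots,b_{n}$ is a well-defined invariant of $k$. By Corollary~\ref{cor:completo3} there are exactly $4$ oriented knotoids with double branched cover $K^{i}_{h}$, so $\mathcal{S}$ consists precisely of the products obtained by listing the $n$ summands in one of $n!$ orders and choosing, above each, one of the $4$ available knotoids; by uniqueness these $4^{n}n!$ knotoids are pairwise distinct.

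Now I would check the orbit sizes. From the definitions, reversion reverses the order of a product and reverses each factor, $r(b_{1}\cdots b_{n})=r(b_{n})\cdots r(b_{1})$, whereas rotation preserves the order and rotates each factor, $\rho(b_{1}\cdots b_{n})=\rho(b_{1})\cdots\rho(b_{n})$; also $\mathrm{dbc}$ is unchanged by $r$ and by $\rho$. If $k=b_{1}\cdots b_{n}$ were fixed by $r$ or by $r\rho$, then comparing the unique ordered factorisations of $k$ and of its image and applying $\mathrm{dbc}$ would force the $j$-th summand to be isotopic to the $(n+1-j)$-th summand for every $j$; since the $K^{i}_{h}$ are distinct this means $j=n+1-j$, impossible for $n\ge 2$. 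If $k$ were fixed by $\rho$, then every $b_{j}$ would be rotatable, contradicting Theorem~\ref{marcrot} since $b_{j}$ is a hyperbolic knotoid. Hence no non-trivial element of $G$ fixes any $k\in\mathcal{S}$, and since $r$ and $\rho$ commute the four knotoids $k,r(k),\rho(k),r\rho(k)$ are then pairwise distinct, so every $G$-orbit on $\mathcal{S}$ has exactly $4$ elements. Therefore the number of strong involutions of $K$ up to conjugacy is $|\mathcal{S}|/4=4^{n-1}n!$.

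The step I expect to be the main obstacle is the uniqueness claim of the second paragraph: establishing that the equivariant decomposition exists and that the resulting ordered prime factorisation of a knotoid is a genuine invariant, so that the $4^{n}n!$ combinations really do give distinct knotoids rather than coinciding unexpectedly. This rests on equivariant versions of the sphere theorem and of the Schoenflies and JSJ uniqueness theorems applied to $K$ together with its strong inversion, as well as the bookkeeping matching the knotoid connected-sum spheres with the companion tori in the exterior of $K$. By comparison, reading off the effect of $r$ and $\rho$ on products from Turaev's definitions, used in the third paragraph, is routine.
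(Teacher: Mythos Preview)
Your overall strategy coincides with the paper's: count the oriented knotoids $k$ with $\gamma_S(k)\cong K$, show there are $4^{n}n!$ of them, and then divide by $4$ using the action of reversion and rotation. The orbit--size computation in your third paragraph is correct and in fact more explicit than what the paper writes; the paper just records the formulas $(k_1\cdots k_m)_{\mathbf{rot}}=k_{1\,\mathbf{rot}}\cdots k_{m\,\mathbf{rot}}$ and $-(k_1\cdots k_m)=(-k_m)\cdots(-k_1)$ and declares the claim to follow.

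The one substantive difference is in how you justify existence and uniqueness of the ordered prime factorisation of $k$. You propose to pull the connected--sum spheres of $K$ back through the involution via an equivariant sphere theorem and then argue uniqueness by transporting the JSJ/Schoenflies statements across the branched cover. That can be made to work, but it is unnecessary here: the paper simply invokes Turaev's prime decomposition theorem for knotoids (Theorem~\ref{unicadecomp}) together with Theorem~\ref{thm:multandcon} (primes lift to primes and $\gamma_S$ takes products to connected sums). From these two ingredients one gets directly that any $k$ with $\gamma_S(k)=K$ factors uniquely as an ordered product of $n$ prime knotoids whose lifts are a permutation of the $K_h^i$; distinctness of the $K_h^i$ rules out knot--type factors, so the only ambiguity in Turaev's theorem (commuting a knot--type factor past others) does not arise. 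This replaces your ``main obstacle'' paragraph entirely and is what you should cite instead of the equivariant machinery.
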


As there is an algorithm to decide whether two hyperbolic knots are equivalent (see \cite{manning} and \cite{kupe}) and there is an algorithm to decide whether two involutions of a hyperbolic knot complement are conjugate (see \emph{e.g.} Theorems $8.2$ and $8.3$ of \cite{Lackenbynew}), Theorem \ref{thm:completo}  implies the following stronger result.

\begin{thm}\label{thm:algo}
Given two hyperbolic knotoids $k_1$ and $k_2$, there is an algorithm to determine whether $k_1$ and $k_2$ are equivalent as oriented knotoids.  
\end{thm}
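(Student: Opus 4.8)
The plan is to reduce the oriented-equivalence problem for hyperbolic knotoids to three algorithmic black boxes -- recognition of hyperbolic knots up to equivalence (\cite{manning}, \cite{kupe}), the conjugacy problem for involutions of a hyperbolic knot complement (Theorems 8.2 and 8.3 of \cite{Lackenbynew}), and computability of the cyclic and free periods of a hyperbolic knot -- and then to invoke Theorem \ref{thm:completo}, refined by Theorems \ref{llll} and \ref{marcrot} and Corollary \ref{cor:completo3}, to recover the orientation data that the branched double cover does not see directly.

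First I would, from diagrams of $k_1$ and $k_2$, realise each as an embedded arc in $\mathbb{R}^3$ with endpoints on the two fixed vertical lines and pass to the double cover of $S^3$ branched over those lines; lifting the arcs produces strongly invertible knots $(K_1,\tau_1)$ and $(K_2,\tau_2)$, together with the extra bookkeeping data that records, for each $i$, which of the two arcs of $\mathrm{Fix}(\tau_i)$ projects to the segment joining the endpoints of $k_i$, and an orientation of $K_i$ compatible with that of $k_i$. This is a finite combinatorial construction, and by hypothesis $K_1$ and $K_2$ are hyperbolic. Next, run the algorithm of \cite{manning} and \cite{kupe} deciding equivalence of hyperbolic knots: if $K_1$ and $K_2$ are inequivalent then, by Theorem \ref{thm:completo}, $k_1$ and $k_2$ are not equivalent even up to rotation and reversion, so we output ``no''. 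Otherwise fix an orientation-preserving homeomorphism $h$ of $S^3$ with $h(K_1)=K_2$, set $\tau_1'=h\tau_1 h^{-1}$ -- still a strong inversion of $K_2$ -- and use Theorems 8.2 and 8.3 of \cite{Lackenbynew} to decide whether $\tau_1'$ and $\tau_2$ are conjugate by a symmetry of $(S^3,K_2)$; by geometrisation and Mostow rigidity every such symmetry is isotopic to an element of the finite, computable isometry group of the complement, so this is decidable. If they are not conjugate, then $(K_1,\tau_1)\not\sim(K_2,\tau_2)$ and we again output ``no''. If they are, then by Theorem \ref{thm:completo} together with Corollary \ref{cor:completo3} the knotoid $k_2$ is one of the four oriented knotoids $k_1$, $-k_1$, $(k_1)_{\textbf{rot}}$, $-(k_1)_{\textbf{rot}}$ associated with $K:=K_2$.

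It then remains to decide which of these four $k_2$ is, and in particular whether it equals $k_1$. Transporting all data to $(K,\tau)$ via $h$ and the chosen conjugacy, both $k_1$ and $k_2$ are recorded by elements $c_1,c_2$ of the group $(\mathbb{Z}/2)^2$ generated by ``swap the two arcs of $\mathrm{Fix}(\tau)$'' (rotation) and ``reverse the orientation of $K$'' (reversion), and computing $c_1$ and $c_2$ is algorithmic because $h$, the conjugacy, and the branched coverings are all explicit. By Theorems \ref{llll} and \ref{marcrot} the subgroup $H\le(\mathbb{Z}/2)^2$ of classes giving the same oriented knotoid as $k_1$ is governed entirely by the period structure of $K$: $H$ contains the reversion class exactly when $K$ has cyclic period $2$, contains the reverse-rotation class exactly when $K$ has free period $2$, and never contains the pure rotation class. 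Since the cyclic and free periods of a hyperbolic knot can be computed -- list the order-two isometries of the complement that extend over $S^3$, then check which reverse the orientation of $K$ and whether their fixed loci meet $K$ -- we can determine $H$, and output ``yes'' precisely when $c_1 c_2^{-1}\in H$.

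The main obstacle is this last step: making the dictionary between oriented knotoids and strongly invertible knots constructive at the level of the $(\mathbb{Z}/2)^2$ of extra data, i.e.\ tracking the distinguished fixed arc and the induced orientation of the lifted knot through $h$ and through the conjugating symmetry, and checking that the abstract identifications among $k_1,-k_1,(k_1)_{\textbf{rot}},-(k_1)_{\textbf{rot}}$ predicted by Corollary \ref{cor:completo3} really are the ones realised by these explicit maps. The other ingredients -- hyperbolic knot recognition, conjugacy of complement involutions (with Gordon--Luecke to move freely between a hyperbolic knot and its exterior), and period computation -- are used as stated, and assembling them into the algorithm above is then only bookkeeping.
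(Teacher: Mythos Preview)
Your reduction to hyperbolic knot recognition, the conjugacy problem for involutions, and period computation, together with your invocation of Theorem \ref{thm:completo} and Corollary \ref{cor:completo3}, matches the paper's argument exactly up to the point where one has established that $k_2$ lies in the set $\{k_1, -k_1, (k_1)_{\textbf{rot}}, -(k_1)_{\textbf{rot}}\}$. The final step, however, is genuinely different. The paper does not attempt to transport the $(\mathbb{Z}/2)^2$ labelling through an explicit conjugating homeomorphism; instead it runs a naive Reidemeister search from the given diagram of $k_1$, halting when the result coincides with one of the fixed diagrams of $k_2$, $-k_2$, $(k_2)_{\textbf{rot}}$, $-(k_2)_{\textbf{rot}}$. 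Corollary \ref{cor:completo3} guarantees that exactly one of these is equivalent to $k_1$, so the search terminates and simultaneously identifies which candidate $k_1$ is. Your route is conceptually tighter and avoids an uncontrolled search, at the price of requiring the cited algorithms to output the homeomorphism $h$ and the conjugating symmetry explicitly (which they do, since for a hyperbolic complement one simply enumerates the finite isometry group) and of the bookkeeping you correctly flag as the main obstacle. One small correction to that bookkeeping: the orientation of $k_i$ does not lift to a well-defined orientation of $K_i$, because the two lifts of $e_0$ both run from $\tilde v_0$ to $\tilde v_1$ and so do not glue to a consistent orientation of the circle $K_i$; what actually records reversion upstairs is the labelling of the two points $K_i \cap \mathrm{Fix}(\tau_i)$ (equivalently, of the vertices of the $\theta$-curve), not an orientation of $K_i$.
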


Our construction enables us to distinguish knotoids sharing the same class in $S^2$ that are inequivalent as knotoids in $D^2$ (precise definitions are given in Section \ref{sec:Preliminares}). In particular, we can detect the trivial planar knotoid $k_0^{pl}$.

\begin{thm}\label{thm:banaleintro}
A planar knotoid $k $  lifts to a knot isotopic to the core of the solid torus if and only if $k$ is the trivial planar knotoid $k^{\textrm{pl}}_0$.
 
\end{thm}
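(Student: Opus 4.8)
The plan is to treat the two implications separately. For the ($\Leftarrow$) direction I would argue by a direct computation: represent $k^{\textrm{pl}}_0$ by an unknotted crossingless arc lying in a single horizontal level of the three-dimensional model for planar knotoids, and read off its lift from the branched-cover construction. Its preimage is obtained by doubling the arc along the two branch lines, and one checks that the resulting simple closed curve wraps exactly once around the solid torus and is unentangled with everything else, hence is isotopic to a core. Before turning to the converse I would also record the elementary fact that a knot $K$ in a solid torus $W$ is isotopic to a core if and only if its exterior $W \setminus N(K)$ is homeomorphic to $T^2 \times I$, so that the two formulations may be used interchangeably.

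For the ($\Rightarrow$) direction, suppose the planar knotoid $k$ lifts to a knot $K \subset W$ with $W \setminus N(K) \cong T^2 \times I$. As a useful first reduction I would pass to the underlying knotoid in $S^2$: since $K$ is a core of $W$ it is in particular the unknot once viewed in the ambient $S^3$, and the unknot carries a unique strong inversion up to conjugacy (a classical consequence of the positive solution of the Smith conjecture and its equivariant refinements), so by Theorem~\ref{thm:completo} the image of $k$ in $\mathbb{K}(S^2)$ is the trivial knotoid. This removes all classical knotting and isolates the genuinely planar information, namely the position inside $W$ of $K$, of the restriction $\tau$ of the deck transformation, and of the branch locus.

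The heart of the argument is then equivariant. Since $\tau$ preserves $K$ and reverses its orientation, it acts on $W \setminus N(K) \cong T^2 \times I$ with fixed-point set a union of properly embedded arcs joining the two boundary tori (the arcs of the branch locus cut open by $K$). Using the classification of finite-order diffeomorphisms of $T^2 \times I$ — it admits a flat structure, so such a diffeomorphism is conjugate to an isometry, and up to conjugacy there is a unique orientation-preserving involution with one-dimensional fixed set — one obtains that $\tau$ is standard on the product; re-gluing the invariant neighbourhood $N(K)$, which carries the standard branched-cover involution of a ball containing a trivial $2$-string tangle, then shows that $(W,K,\tau)$ together with its branch locus is the standard model, and passing to the quotient identifies the knotoid diagram of $k$ with the trivial arc, i.e. $k = k^{\textrm{pl}}_0$. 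I expect the main obstacle to be precisely this step: one must upgrade the purely topological fact that $K$ is a core into control of the branched-cover involution, and then check that standardness of $\tau$ on $W \setminus N(K)$ propagates across the equivariant re-gluing of $N(K)$ and the reassembly of the branch arcs. An alternative and perhaps cleaner route to the same conclusion is to use the equivariant sphere theorem to produce a $\tau$-invariant $2$-sphere meeting $K$ in two points, push its image in the quotient ball to a trivializing disk for the knotoid diagram, and cut along it to reduce the complexity of $k$, arguing by induction that the process terminates at $k^{\textrm{pl}}_0$; this is essentially an equivariant Heegaard-uniqueness argument for $S^3$.
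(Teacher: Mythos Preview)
Your outline follows the same skeleton as the paper: the trivial direction is immediate, and for the converse one restricts the covering involution $\tau$ to the complement $W\setminus N(K)\cong T^2\times I$ and invokes the classification of involutions there. The paper uses precisely this, citing Kim--Tollefson to reduce $\tau$ on $T^2\times I$ to a product $\bar\tau\times\mathrm{id}_I$, and then the classical fact that $T^2$ has a unique orientation-preserving involution with four fixed points.

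Where you diverge is in the final step, and this is also where your proposal is incomplete. You want to show equivariantly that $(W,K,\tau,\mathrm{Fix}(\tau))$ is the standard model by re-gluing $N(K)$, but you correctly flag that matching the conjugating homeomorphisms across $\partial N(K)$ is the obstacle, and you do not resolve it. The paper avoids this by working downstairs: once $T^2\times I/\tau$ is identified with the complement of the trivial knotoid in the cylinder $Y=D^2\times I$, the remaining issue is that this identification is only a homeomorphism of pairs, not an isotopy. The paper closes the gap by computing $MCG(Y;\,p\times I,\,q\times I)\cong\mathbb{Z}$, generated by a single Dehn twist along a rectangle encircling the trivial arc, which visibly fixes the line-isotopy class of $k_0^{\mathrm{pl}}$. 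This mapping-class-group computation is the substantive content you are missing; your equivariant re-gluing would need an equivalent statement upstairs (essentially the $\tau$-equivariant mapping class group of $T^2$), which you have not supplied.

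Two smaller points. Your ``first reduction'' via Theorem~\ref{thm:completo} and uniqueness of the strong inversion on the unknot is logically fine but idle: you do not use the conclusion that $\iota(k)$ is trivial in the rest of the argument, and in the paper's order Theorem~\ref{thm:completo} is proved after the present theorem, so the detour would be a forward reference. Your alternative route via the equivariant sphere theorem is plausible but is not what the paper does and would itself need care to set up the induction on complexity.
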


\subsection*{Structure of the paper}
The paper is structured as follows. After recalling some basics on knotoids in Section \ref{sec:Preliminares}, we present the map defined by the double branched cover in Section \ref{sec:dbc}. We recall the $1$-$1$ correspondence between knotoids and isotopy classes of simple $\theta$-curves, following \cite{turaev}, in Section \ref{sec:dbc}. In Section \ref{sec:dbctheta}, results from \cite{theta} are translated in terms of knotoids. In particular, Theorem \ref{thm:banalegen} allows us to detect the trivial knotoid $k_0$ among all the other knotoids. In Section \ref{sec:trivialdetection} we prove a slightly more powerful version of Theorem \ref{thm:banalegen}, Theorem \ref{thm:banaleintro}, enabling the detection of the trivial planar knotoid $k^{\textrm{pl}}_0$ (see Section \ref{sec:Preliminares} for the precise definitions). Section \ref{sec:strong} is devoted to the proof of Theorems \ref{thm:completo} and \ref{thm:algo}. In Section \ref{sec:inam} we prove Theorem \ref{llll} and Theorem \ref{marcrot} together with Corollary \ref{cor:completo3} and Proposition \ref{compositeknots}. In Section \ref{sec:dimos} we show that our construction can be used to distinguish between planar knotoids that are equivalent in $S^2$.
Finally, in Section \ref{sec:gauss} we describe an algorithm that produces the Gauss code for the lift of a knotoid $k$ given the Gauss code for $k$.

All maps and manifolds are assumed to be smooth, and we use the following notation:
\begin{itemize}
 \item $\mathbb{K}(X)$ and $\mathbb{K}(X)/_{\sim}$ are the sets of oriented and unoriented knotoid diagrams, respectively, up to equivalence in $X$, where $X = S^2$ or $ \mathbb{R}^2$;
 \item $\mathbb{K}(X)/_{\approx}$ is the set of unoriented knotoid diagrams up to equivalence in $X$, where $X = S^2$ or $ \mathbb{R}^2$, and up to rotation;
 \item $\Theta^s$ is the set of simple labelled $\theta$-curves in $S^3$;
 \item $\Theta^s/_ \sim$ and $\Theta^s/_\approx$ are the sets of simple labelled $\theta$-curves in $S^3$ up to relabelling the vertices, and up to relabelling the vertices and the edges $e_-$ and $e_+$, respectively;
 \item $\mathcal{K}(Y)$ is the set of knots in $Y$, where $Y = S^3$ or $ S^1 \times D^2$;
 \item $\mathcal{K}_{S.I.}(S^3)$ is the subset of $\mathcal{K}(S^3)$ consisting of knots that admit a strong inversion;
 \item $\mathcal{K}SI(S^3)$ is the set of strongly invertible knots $(K,\tau)$ in $S^3$.
\end{itemize}

\vspace{0.8cm}
\textbf{Acknowledgements:}
A.B. would like to thank D. Celoria, M. Golla and M. Nagel for interesting conversations. A.B. is supported by the EPSRC grant ``Algebraic and topological approaches for genomic data in molecular biology'' EP/R005125/1. D.B. gratefully acknowledges support from the Leverhulme Trust, Grant RP2013-K-017. The authors are grateful to D. Goundaroulis for providing us with the examples of Section \ref{sec:dimos}. The authors would like to thank the referees, whose helpful suggestions led to considerable improvements to the paper.

\section{Preliminares}\label{sec:Preliminares}
A \emph{knotoid diagram} in $S^2$ is a generic immersion of the interval $[0,1]$ in $S^2$ with finitely many transverse double points endowed with over/under-crossing data. The images of the points $0$ and $1$ are distinct from the other points and from each other. The \emph{endpoints} of a knotoid diagram are called the \emph{tail} and the \emph{head} respectively, and denoted by $v_0$ and $v_1$. Knotoid diagrams are oriented from the tail to the head, see Figures \ref{fig:iotanonsurg} and \ref{fig:overclosure}. 

\begin{defi}
A \emph{knotoid} is an equivalence class of knotoid diagrams on the sphere considered up to isotopies of $S^2$ and the three classical Reidemeister moves (see Figure \ref{reidemoves}), performed away from the endpoints.  

\end{defi}

It is not permitted to pull the strand adjacent to an endpoint over/under a transversal strand as shown in Figure \ref{fig:forbidden}. Notice that allowing such moves produces a trivial theory: namely, any knotoid diagram can be transformed into a crossingless one by a finite sequence of forbidden moves. 

\begin{figure}[h]
\includegraphics[width=5cm]{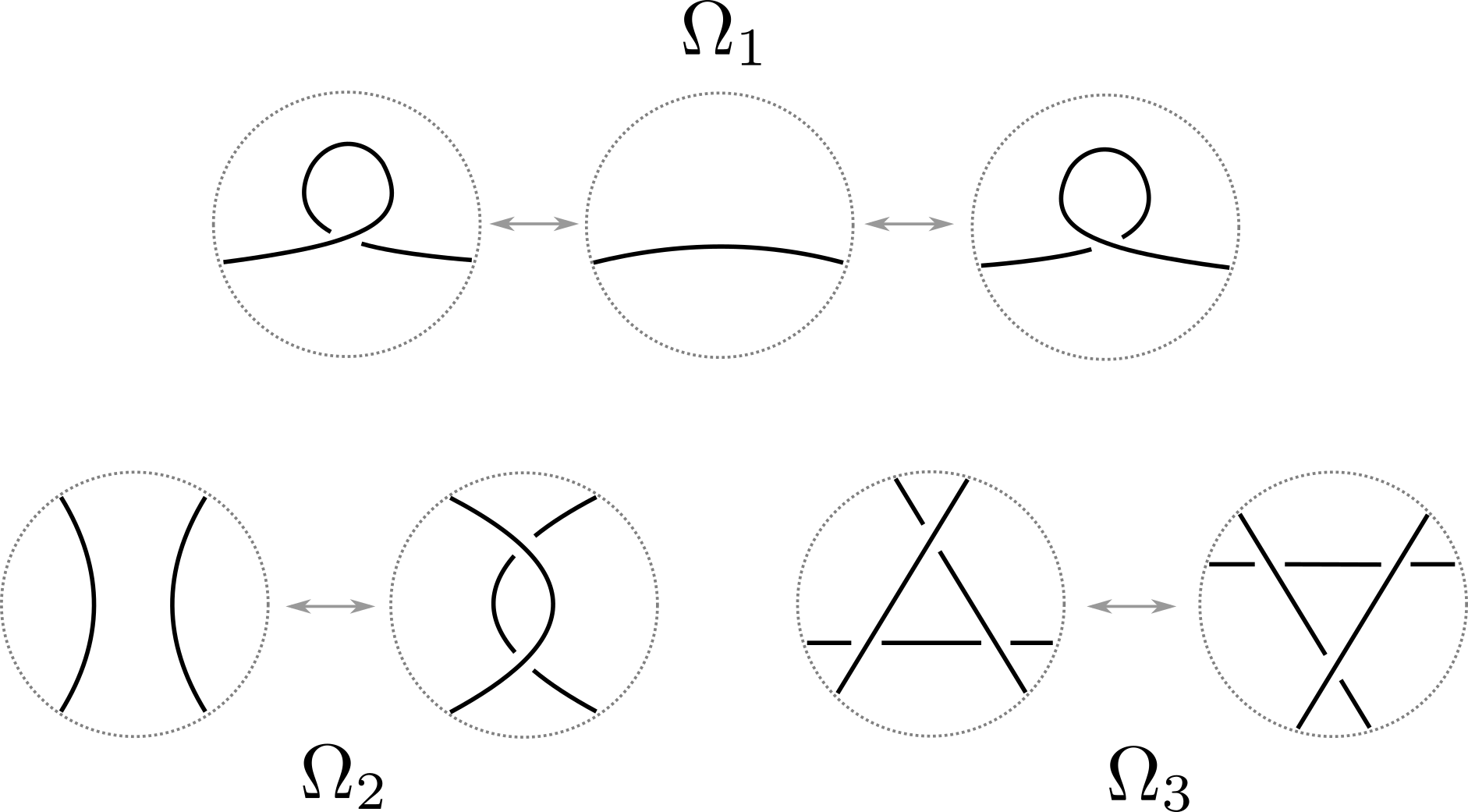}
\caption{The classical Reidemeister moves.}
\label{reidemoves}
\end{figure}

The \emph{trivial knotoid} $k_0$ is the equivalence class of the crossingless knotoid diagram.\\

\begin{figure}[h]
\includegraphics[width=7cm]{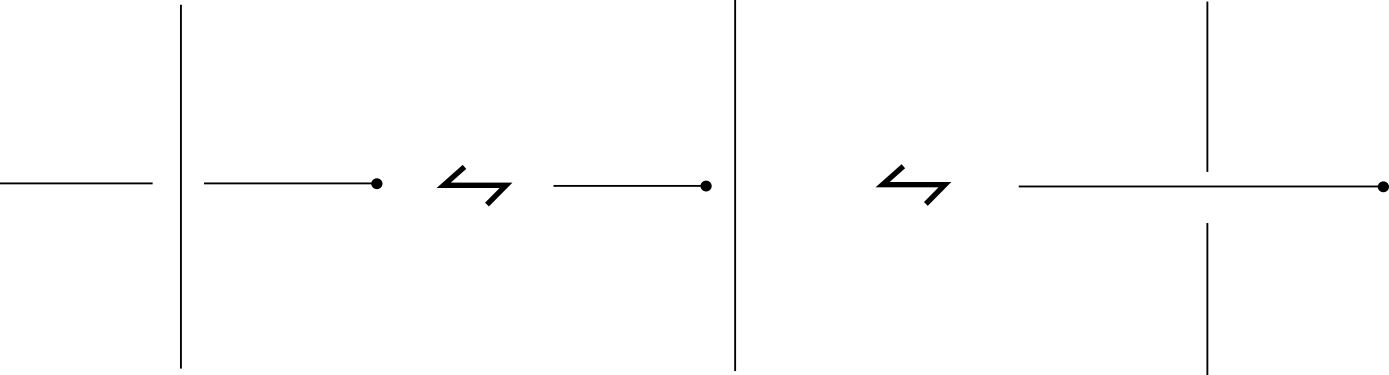}
\caption{Forbidden moves.}
\label{fig:forbidden}
\end{figure}

Any knotoid $k$ in $S^2$ can be represented by several knotoid diagrams in $\mathbb{R}^2$, by choosing different stereographic projections. Two knotoid diagrams in $\mathbb{R}^2$ are said to be \emph{equivalent} if they are related by planar isotopies and a finite sequence of Reidemeister moves, performed away from the endpoints. We will denote by $k^{\textrm{pl}}_0$ the equivalence class in $\mathbb{R}^2$ of the crossingless knotoid diagram.

Let us denote the set of oriented knotoids in the plane and in the sphere by $\mathbb{K}(\mathbb{R}^2)$ and $\mathbb{K}(S^2)$ respectively. We can define the map $$\iota: \mathbb{K}(\mathbb{R}^2) \longrightarrow \mathbb{K}(S^2) $$
induced by the inclusion $\mathbb{R}^2 \hookrightarrow S^2 = \mathbb{R}^2 \cup \infty$. The map $\iota$ is surjective but not injective (see Figure \ref{fig:iotanonsurg} for an example, or \cite{knotoids} for more details).

\begin{figure}[h]
\includegraphics[width=5cm]{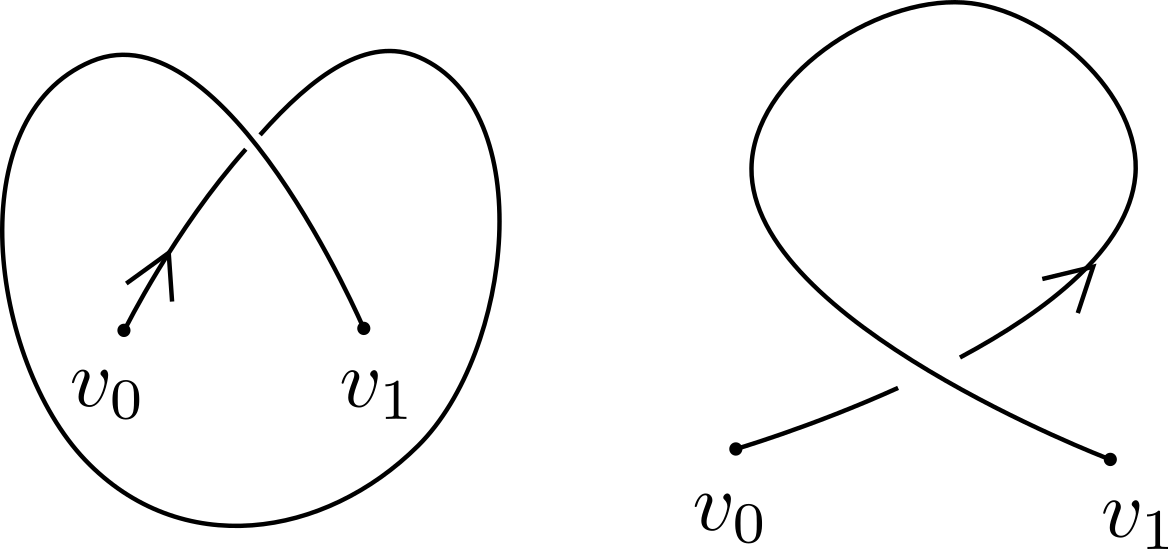}
\caption{The diagrams in the picture represent inequivalent knotoids $k_1, k_2$ in $\mathbb{K}(\mathbb{R}^2)$, but they are both sent into the trivial knotoid in $\mathbb{K}(S^2)$ under the map $\iota$.}
\label{fig:iotanonsurg}
\end{figure}

There is a natural way to associate two different knots to any knotoid through the \emph{underpass closure map} and the \emph{overpass closure map}, defined in \cite{turaev} and \cite{knotoids} and denoted by $\omega_-$ and $\omega_+$ respectively. Given a diagram representing a knotoid $k$, a diagram representing $\omega_-(k)$ (respectively $\omega_+ (k)$)  is obtained by connecting the two endpoints by an arc embedded in $S^2$ which is declared to go under (respectively over) each strand it meets during the connection.

\begin{rmk}\label{notenough}
Different knotoids may have the same image under $\omega_+$ and $\omega_-$, see for example the knotoids in Figure \ref{fig:overclosure}. In Section \ref{sec:dbc} we are going to present a more subtle way to associate a knot to a knotoid, which allows for a finer classification.

\end{rmk}

\begin{figure}[h]
\includegraphics[width=5cm]{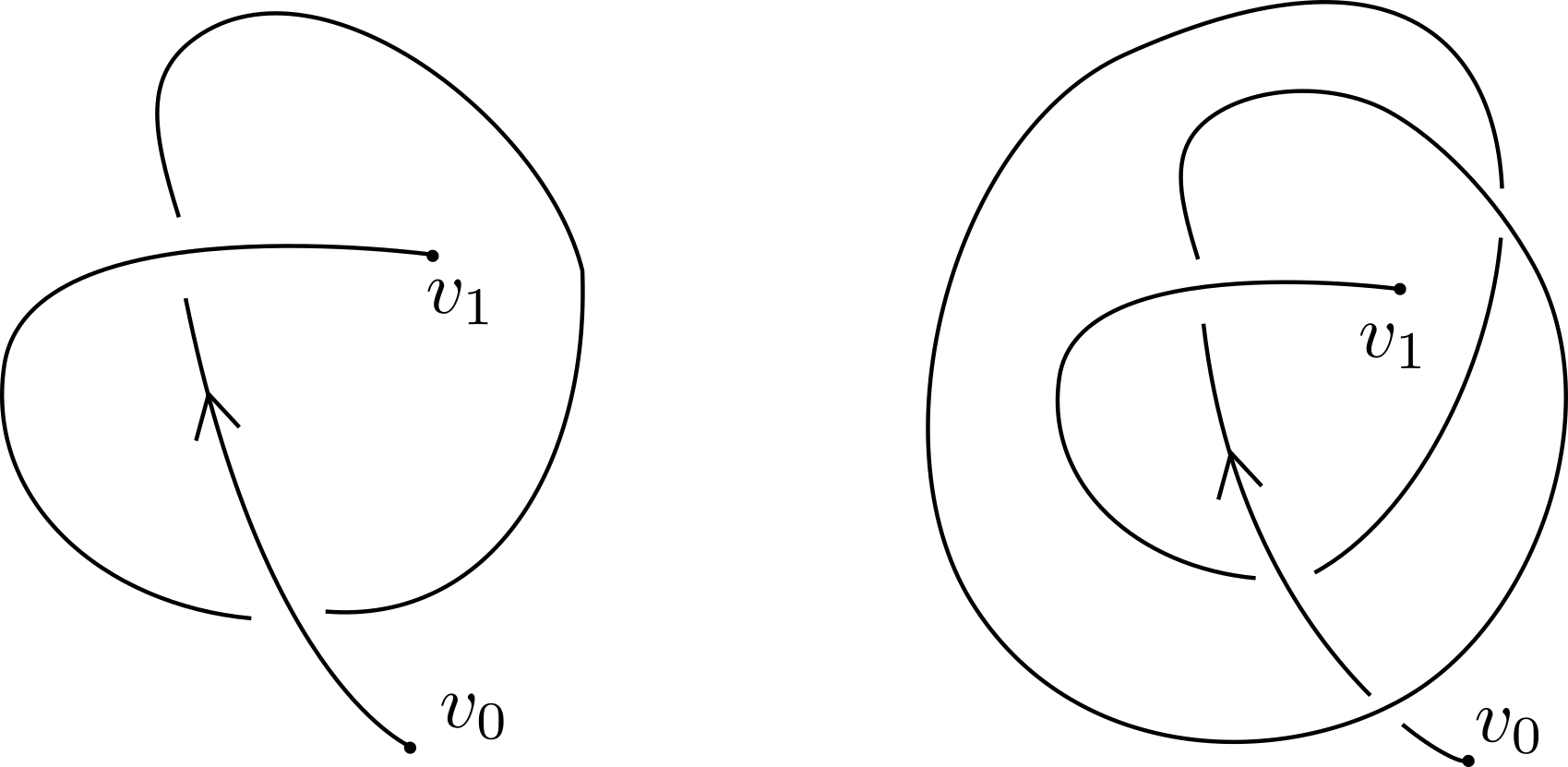}
\caption{The images under $\omega_-$ of the two knotoids in the figure are both the trefoil knot, and the images under $\omega_+$ are both the trivial knot, but the knotoids are not equivalent.}
\label{fig:overclosure}
\end{figure}

Moreover, every knot $ K \subset  S^3$ arises as the image under $\omega_{\pm} $ of a knotoid diagram. Indeed, choose any diagram representing $K$, and cut out an arc that does not contain any crossings, or that contains only crossings which are overcrossings (respectively undercrossings). This results in creating a knotoid diagram, whose image under $\omega_+$ (respectively $\omega_-$) is the starting knot $K$. It is important to notice that different choices of arcs in $K$ may result in non-equivalent knotoid diagrams. However, choosing the arc to be crossingless induces a well defined injective map $\alpha$ from the set of knots in $S^3$ to $\mathbb{K}(S^2)$ (see \cite{turaev}, \cite{knotoids} for more details).  

\begin{defi}
Knotoids in $\mathbb{K}(S^2)$ that are contained in the image of $\alpha$ are called \emph{knot-type knotoids}. Equivalently, a knotoid is a knot-type knotoid if and only if it admits a diagram in which the endpoints lie in the same region (see Figure \ref{fig:iotanonsurg}). 
The other knotoids are called proper knotoids (see Figure \ref{fig:examplesofknotoids}).
\end{defi}

There is a $1$-$1$ correspondence between knot-type knotoids and classical knots: knot-type knotoids may be thought as long knots, and (see \emph{e.g.} \cite{long}) closing the endpoints of a long knot produces a classical knot carrying the same knotting information. Thus, we can conclude that a knot-type knotoid can be considered the same as the knot it represents.

\subsection{Involutions}\label{sec:symmetries}
Turaev \cite{turaev} defines three commuting involutive operations on knotoids in $\mathbb{K}(S^2)$. These operations are called \emph{reversion}, \emph{mirror reflection} and \emph{symmetry}. The first two operations are borrowed from knot theory: reversion has the effect of changing the orientation of a knotoid, or, in other words, of exchanging the labels of the endpoints, and mirror reflection transforms a knotoid into a knotoid represented by the same diagrams with all the crossings changed. 
The third involution is defined by the extension to $S^2$ of the reflection of the plane $\mathbb{R}^2$ along the horizontal line passing through the endpoints, see Figure \ref{fig:symmetries}.

\begin{figure}[h]
\includegraphics[width=12cm]{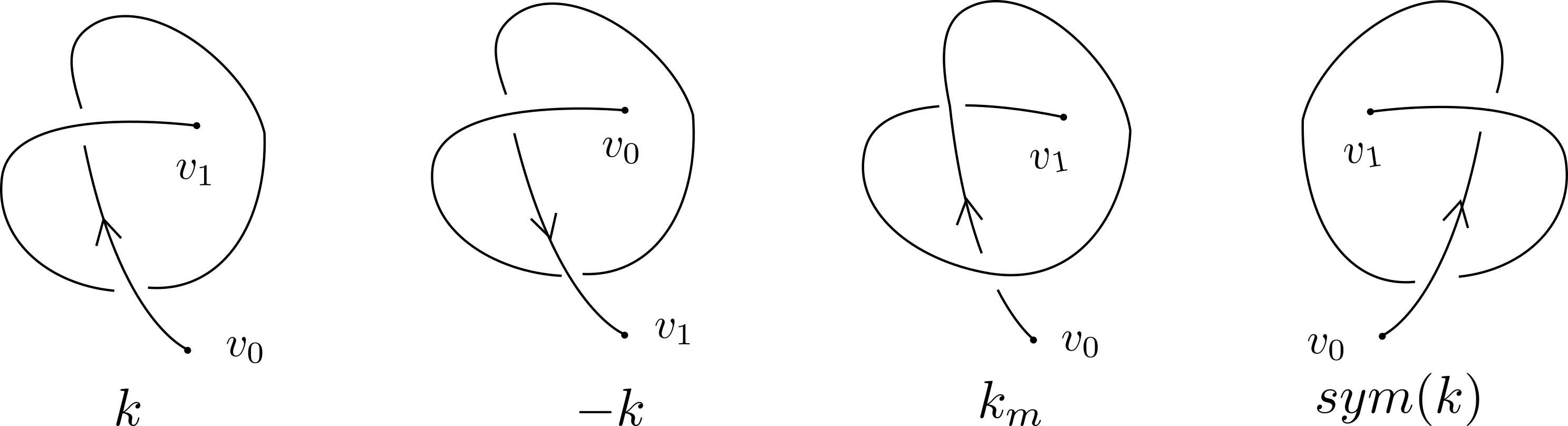}
\caption{From left to right, a knotoid $k$, its reverse $-k$, its mirror image $k_m$ and its symmetric $sym(k)$.}
\label{fig:symmetries}
\end{figure}

We will find it useful to define the involution obtained as the composition of symmetry and mirror reflection.

\begin{defi}\label{rotatable}
Two knotoids $k$ and $k_{\textbf{rot}} \in \mathbb{K}(S^2)$ differ by a \emph{rotation} if they have diagrams that are obtained from each other by reflecting the sphere of the diagram in a line through the endpoints of the knotoids, followed by a mirror reflection, see Figure \ref{fig:examplesofknotoids}. If a knotoid is unchanged by a rotation, we say that it is \emph{rotatable}.
\end{defi}

\begin{rmk}\label{noinvoknots}
Note that for knot-type knotoids symmetry and mirror reflection coincide (see \cite{turaev}). In particular, every knot-type knotoid is rotatable.
 
\end{rmk}

\subsection{Multiplication of Knotoids}\label{sub:molt}
In \cite{turaev} an analogue for the connected sum of knots is defined: the multiplication of knotoids. Note that each endpoint of a knotoid diagram $k$ in $S^2$ admits a neighbourhood $D$ such that $k$ intersects it in exactly one arc (a radius) of $D$. Such a neighbourhood is called a \emph{regular neighbourhood} of the endpoint. Given two diagrams in $S^2$ representing the knotoids $k_1$ and $k_2$, equipped with a regular neighbourhood $D_1$ for the head of $k_1$ and $D_2$ for the tail of $k_2$, the \emph{product knotoid} $k = k_1 \cdot k_2$ is defined as the equivalence class in $\mathbb{K}(S^2)$ of the diagram obtained by gluing $S^2 \setminus \text{int}(D_1)$ to $S^2 \setminus \text{int}(D_2)$ through an orientation-reversing homeomorphism $\partial D_1 \longrightarrow \partial D_2$ mapping the only point in $\partial D_1 \cap k_1$ to the only point in $\partial D_2 \cap k_2$. Note that this operation is not commutative (see \cite{turaev}, Section $4$).

\begin{defi}
 A knotoid $k$ in $\mathbb{K}(S^2)$ is called \emph{prime} if it is not the trivial knotoid and $k = k_1 \cdot k_2$ implies that either $k_1$ or $k_2$ is the trivial knotoid.
 
\end{defi}

This multiplication operation has been extensively studied in \cite{turaev}, where the following result on prime decomposition is proven.

\begin{thm}[Theorem $4.2$, \cite{turaev}]\label{unicadecomp}
Every knotoid $k$ in $\mathbb{K}(S^2)$ expands as a product of prime knotoids. 
\end{thm}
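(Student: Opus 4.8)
The plan is to reduce the statement to a well-foundedness property of the divisibility relation on $\mathbb{K}(S^2)$: there is no infinite chain $k = k^{(0)}, k^{(1)}, k^{(2)}, \dots$ in which each $k^{(i+1)}$ is a nontrivial knotoid occurring as a factor of $k^{(i)}$, say $k^{(i)} = k^{(i+1)}\cdot k'$ or $k^{(i)} = k'\cdot k^{(i+1)}$ with $k'$ also nontrivial. Granting this, existence of a prime factorisation is the usual induction: $k_0$ is the empty product; if $k$ is prime we are done; otherwise $k = k_1\cdot k_2$ with $k_1,k_2$ nontrivial and we apply the inductive hypothesis to each, discarding any trivial factors that appear since $k_0$ is a two-sided identity for the multiplication. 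So the entire content is the construction of a complexity $c\colon \mathbb{K}(S^2)\to\mathbb{Z}_{\ge 0}$ which vanishes only on $k_0$ and satisfies $c(k_1),c(k_2) < c(k)$ whenever $k = k_1\cdot k_2$ with both factors nontrivial.

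The most economical candidate is the crossing number. It vanishes only on $k_0$, since a crossingless knotoid diagram is an embedded arc in $S^2$, unique up to isotopy. Gluing minimal diagrams of $k_1$ and $k_2$ along the boundaries of regular neighbourhoods of an endpoint, as in the definition of the product, yields a diagram of $k$, so $c(k)\le c(k_1)+c(k_2)$; since $c(k_{3-i})\ge 1$ for nontrivial factors, the desired strict inequality $c(k_i) < c(k)$ is equivalent to the reverse estimate $c(k_1)+c(k_2)\le c(k)$. This is exactly the sticking point: it amounts to saying that a minimal diagram of a composite knotoid can be arranged so that a circle meeting it transversally in a single point splits it into minimal diagrams of the two factors, i.e.\ to additivity of the crossing number under knotoid multiplication, whose analogue for connected sums of knots is a famous open problem. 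I do not expect this route to succeed directly.

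I would therefore pass to the three-dimensional model. By the correspondence recalled in Section \ref{sec:dbc}, a knotoid in $S^2$ is the same datum as a (suitably labelled) simple $\theta$-curve $\Theta = e_0\cup e_-\cup e_+\subset S^3$. Under this dictionary a product $k = k_1\cdot k_2$ corresponds to a vertex connected sum: there is a $2$-sphere $S\subset S^3$ meeting $\Theta$ transversally in three points, one in the interior of each edge, such that the two balls it bounds carry, after filling each with a trivial vertex, the $\theta$-curves of $k_1$ and $k_2$ --- a circle meeting a knotoid diagram transversally in one point separates its two endpoints, and the sphere built from it can then be isotoped to meet each of $e_0$, $e_-$ and $e_+$ exactly once. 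The knotoid is prime precisely when every such decomposing $3$-punctured sphere is inessential, i.e.\ boundary-parallel into a regular neighbourhood of a vertex (equivalently, one of its two sides is trivial). Existence of a prime factorisation is then a Haken--Kneser finiteness statement in the exterior $S^3\setminus\mathrm{int}\,N(\Theta)$, a compact $3$-manifold with genus-two boundary: put the decomposing spheres in normal form with respect to a fixed triangulation, use that a system of pairwise disjoint, pairwise non-parallel, essential $3$-holed spheres in it has cardinality bounded in terms of the triangulation, and cut $\Theta$ along a maximal such system to display $k$ as a product of finitely many prime knotoids.

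On this route the main obstacle is not the finiteness, which is standard once set up, but the dictionary: one must check that knotoid multiplication matches vertex sum along $3$-punctured spheres of exactly this type, that primeness in the sense of Section \ref{sub:molt} matches the absence of an essential such sphere, and that the relabelling conventions for $\theta$-curves are compatible with isotoping these spheres. Should one prefer to avoid normal surface theory, the alternative is to exhibit a genuine numerical invariant of knotoids that is additive, or additive up to an additive constant, under multiplication --- a bridge-number analogue counting the maxima of a Morse-position diagram being the natural guess --- and to feed it into the induction of the first paragraph; that simply relocates the difficulty into a (possibly delicate) combinatorial additivity statement.
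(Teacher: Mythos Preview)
The paper does not prove this statement at all: it is quoted verbatim as Theorem~4.2 of Turaev's paper \cite{turaev} and used as a black box, so there is no in-paper argument to compare your proposal against.

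That said, your diagnosis of the two routes is accurate. The crossing-number approach really does founder on additivity, and you are right to abandon it. Your second route---passing through the semigroup isomorphism $t:\mathbb{K}(S^2)\to\Theta^s$ of Theorem~\ref{thm:semigroupsiso} and then running a Haken--Kneser finiteness argument on essential thrice-punctured spheres in the $\theta$-curve exterior---is essentially the strategy Turaev uses in \cite{turaev}: he transports the problem to $\Theta^s$ and invokes the known prime decomposition for $\theta$-curves (due to Motohashi and to Wolcott, who establish exactly the finiteness you describe via normal surfaces in the handlebody complement). So your outline is correct in substance and matches the literature; the ``dictionary'' worries you flag are handled once and for all by Theorem~\ref{thm:semigroupsiso}, since $t$ is a \emph{semigroup} isomorphism, so primeness and factorisations correspond automatically. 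The one point to tighten is that your decomposing sphere must be isotoped to be \emph{standard} with respect to the labelling (hitting each of $e_0,e_-,e_+$ once), which is exactly what the vertex-multiplication picture in Section~\ref{sub:molt} and Figure~\ref{fig:thetamulti} records; once that is said, the rest of your sketch goes through.
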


Moreover, the expansion as a product is unique up to the identity $$k \cdot k' = k' \cdot k$$ where $k'$ is a knot-type knotoid, and the multiplication operation turns $\mathbb{K}(S^2)$ into a semigroup. 

\begin{rmk}\label{rmk:prodpiano}
Since the surface in which the diagram of $k = k_1 \cdot k_2$ lies is the $2$-sphere obtained as the connected sum between the $2$-spheres containing the diagrams of $k_1$ and $k_2$, the operation of multiplication is well defined only in $\mathbb{K}(S^2)$. A diagram in the plane for $k$ can be obtained by drawing the tail of $k_2$ in the \emph{external} region of the diagram, as shown in Figure \ref{fig:prodotto}.
 
\end{rmk}

\begin{figure}[h]
\includegraphics[width=4cm]{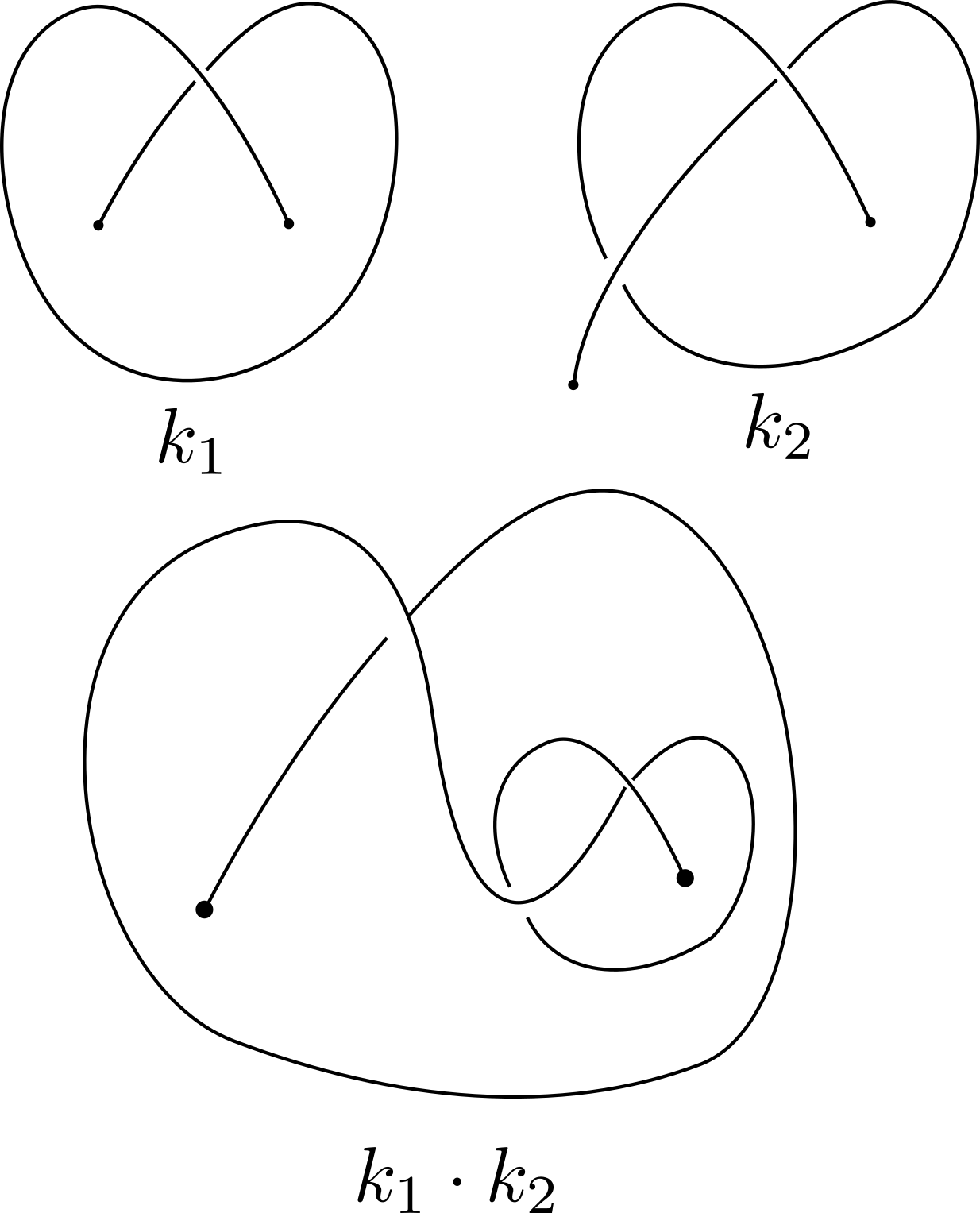}
\caption{On the bottom line, a diagram representing the product $k_1 \cdot k_2$ of the knotoids in the upper line.}
\label{fig:prodotto}
\end{figure}

Note that the orientation is required in order to define the multiplication operation. In particular, given a knotoid $k \in \mathbb{K}(S^2)$,  call $-k$ the knotoid represented by the same diagrams as $k$, but with opposite orientation. Then, the following relations hold. 

\begin{itemize}
 \item $k_1 \cdot k_2 = - (-k_2 \cdot -k_1) $
 \item $-k_1 \cdot k_2 = - (-k_2 \cdot k_1) $
 \item $k_1 \cdot -k_2 = - (k_2 \cdot -k_1) $
 \item $k_2 \cdot k_1 = - (-k_1 \cdot -k_2) $
\end{itemize}

We will sometimes find it useful to suppress the orientation. To this end, we will call $\mathbb{K}(S^2)/_{\sim}$ and $\mathbb{K}(\mathbb{R}^2)/_{\sim}$ the sets of unoriented knotoids in the sphere and in the plane, respectively. Note that, in general, the products $k_1 \cdot k_2$, $-k_1 \cdot k_2$, $k_1 \cdot -k_2$ and $k_2 \cdot k_1$ represent non-equivalent classes of unoriented knotoids.

\subsection{Bracket polynomial}

The bracket polynomial of oriented knotoids in $\mathbb{K}(S^2)$ or in $\mathbb{K}(\mathbb{R})^2$ was defined in \cite{turaev}, by extending the state expansion of the bracket polynomial of knots. The definition can be given in terms of a skein relation, with the appropriate normalisations, as for the bracket polynomial of knots. A normalisation of the bracket polynomial of knotoids gives rise to a knotoid invariant generalising the Jones polynomial of knots (after a change of variable). A version of the bracket polynomial (extended bracket polynomial, defined in \cite{turaev}) is used in \cite{tableknotoids} to distinguish knotoids taken from a list containing diagrams with up to $5$ crossings. \\

Although bracket polynomials are useful invariants, it is fairly simple to produce examples of oriented knotoids that cannot be distinguished by them. One way to construct such examples is by using the concept of \emph{mutation}.
Recall that the mutation of an oriented knot $K$ can be described as follows. 
Consider a diagram for $K$, and a $2$-tangle $R$ as in Figure \ref{fig:tangle}. New knots $K'_i$ can be formed by replacing the tangle $R$ with the tangle $R' = \rho_i(R)$ given by rotating $R$ by $\pi$ in one of three ways described on the right side of Figure \ref{fig:tangle}. Each of these three knots is called a mutant of $K$.

\begin{figure}[h]
\includegraphics[width=7cm]{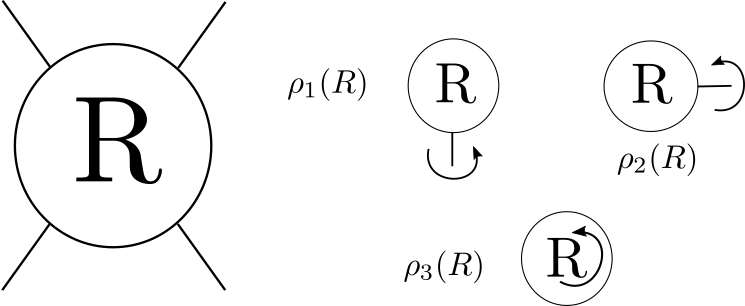}
\caption{A portion of a knot diagram for $K$ contained in the $2$-tangle $R$. By rotating $R$ as in the right side of the picture, we obtain new knots $K'_1$, $K'_2$ and $K'_3$. Each of these knots is called a mutant of $K$.}
\label{fig:tangle}
\end{figure}
The probably best known example of non-equivalent mutant knots is the Conway and Kinoshita-Teresaka pair shown in Figure \ref{fig:mutanti1}. 

\begin{figure}[h]
\includegraphics[width=6cm]{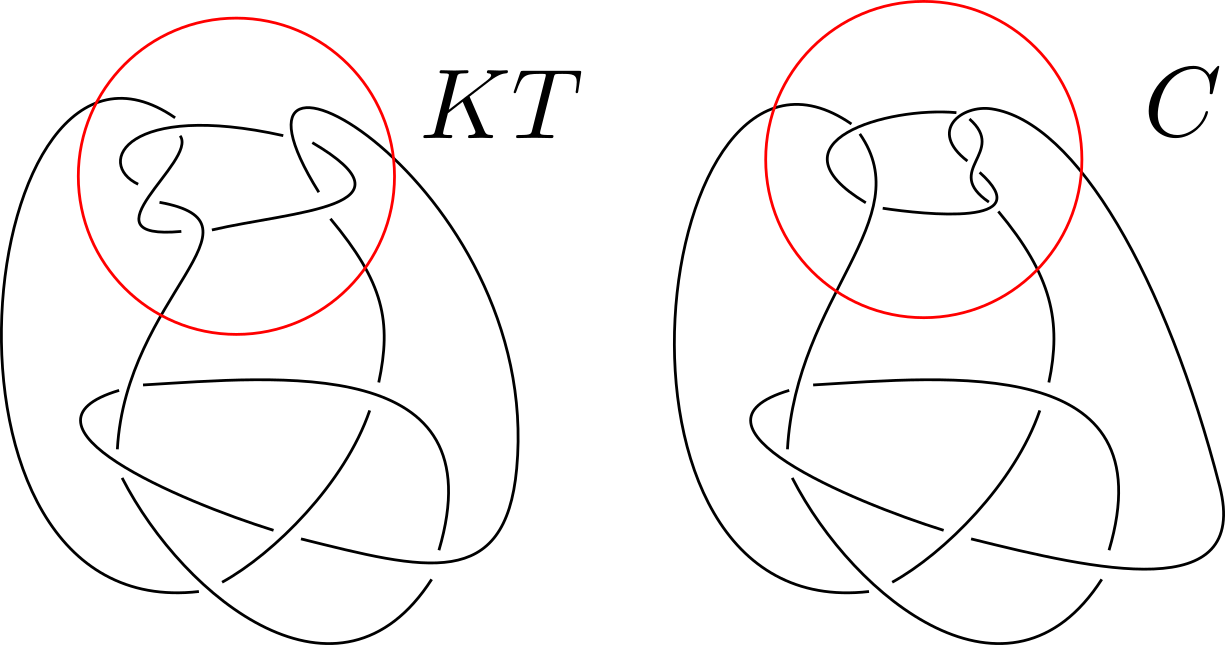}
\caption{The Kinoshita-Teresaka knot $KT$ (on the left) and the Conway knot $C$ (on the right). It was shown by Gabai (\cite{gabai}) that the genus of $KT$ is $2$ while $C$ has genus $3$, thus, they are inequivalent.}
\label{fig:mutanti1}
\end{figure}

Mutation can be generalised to knotoids, by requiring that both the endpoints of a knotoid $k$ lie outside of the tangle $R$ that is rotated.

\begin{rmk}\label{rmk:poliuguale}
It is well known that mutant knots share the same bracket and Jones polynomial. The same result also holds for knotoids, and a proof can be produced in exactly the same way as for knots by using Conway's linear skein theory (see \emph{e.g.} \cite{mutation1} and \cite{mutation2}).
\end{rmk}

Consider knot-tpe knotoids associated to the knots $KT$ and $C$ shown in Figure \ref{fig:mutanti1}; by construction they are non-equivalent, and Remark \ref{rmk:poliuguale} implies that they share the same bracket polynomials. By taking the products $k_1 = KT \cdot k $, $k_2 = C \cdot k $, where $k$ is any proper knotoid (see Figure \ref{fig:mutanti2}) we obtain two proper knotoids\footnote{As a consequence of \cite{turaev}, Theorem $4.2$, the product of two knotoids $k_1$ and $k_2$ is a knot-type knotoid if and only if both $k_1$ and $k_2$ are knot-type knotoids.} with the same bracket polynomials. We will prove in Section \ref{sec:dbctheta} that $k_1$ and $k_2$ are non-equivalent.

\begin{figure}[h]
\includegraphics[width=5.5cm]{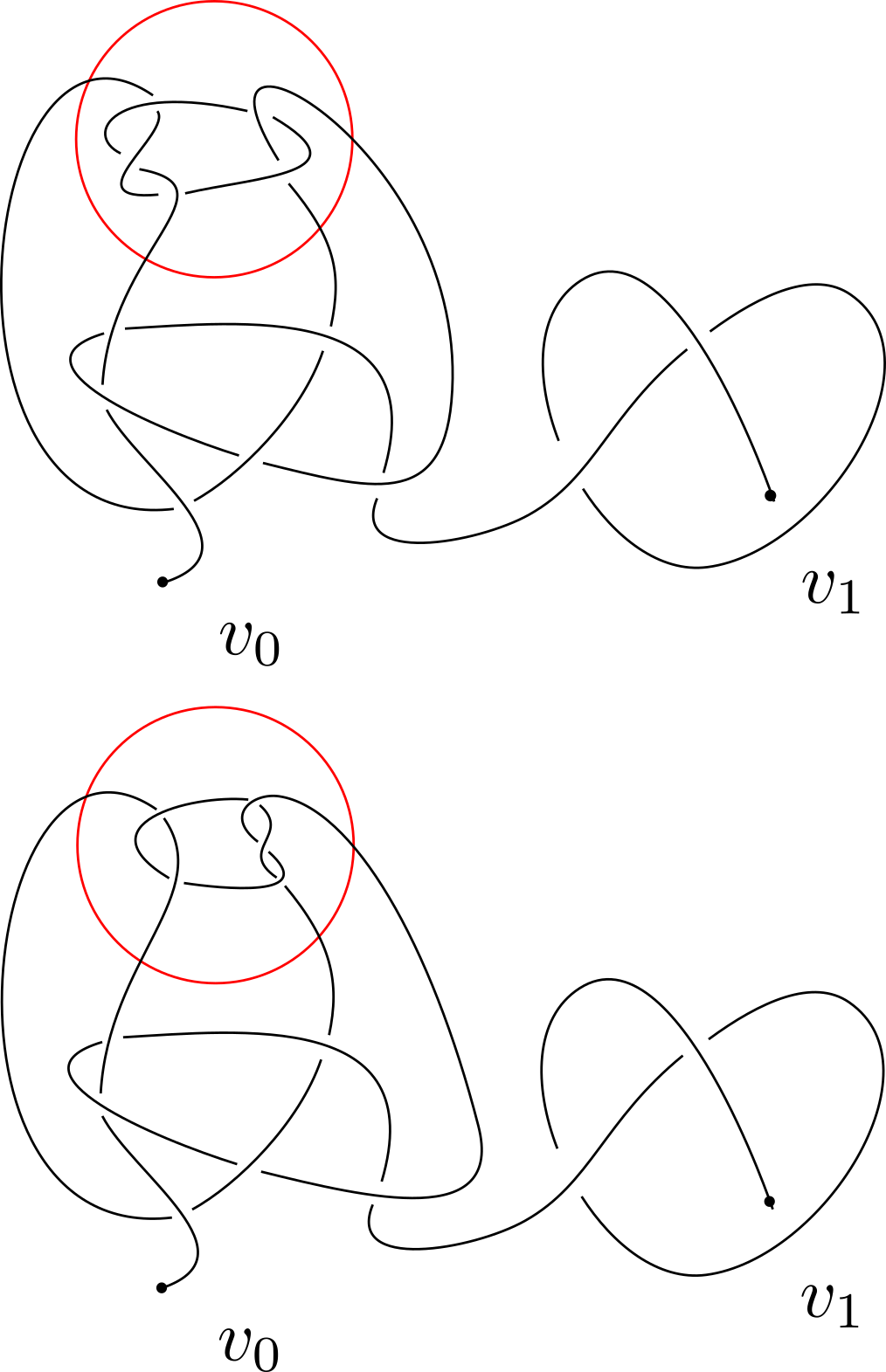}
\caption{The mutant knotoids $k_1 = KT \cdot k $ and $k_2 = C \cdot k $ share the same bracket polynomials.}
\label{fig:mutanti2}
\end{figure}

\section{Double Branched Covers}\label{sec:dbc}
As described in \cite{knotoids}, it is possible to give a $3$-dimensional definition of knotoids, as embedded arcs in $\mathbb{R}^3$, up to a particular isotopy notion. 
\subsection{Knotoids as embedded arcs}\label{subsec:embedded}
Consider a knotoid diagram $k$ in $\mathbb{R}^2$, and identify the plane of the diagram with $\mathbb{R}^2 \times \{0\} \subset \mathbb{R}^3$. We can embed $k$ in $\mathbb{R}^3$ by pushing the overpasses of the diagram into the upper half-space, and the underpasses into the lower one. The endpoints $v_0$ and $v_1$ of $k$ are attached to two lines $t\times \mathbb{R}, h \times \mathbb{R}$ perpendicular to the $xy$ plane.

Two embedded arcs in $\mathbb{R}^3$ with endpoints lying on these two lines are said to be \emph{line isotopic} if there is a smooth ambient isotopy of the pair
($\mathbb{R}^3 ,t\times \mathbb{R} \cup h \times \mathbb{R} $) taking one curve to the other, endpoints to endpoints, and leaving each one of the special lines invariant. Conversely, an embedded curve in $\mathbb{R}^3$ whose projection on the $xy$-plane is generic (plus the additional data of over and under passings) defines a knotoid diagram (see Figure \ref{fig:differentproj}).

\begin{figure}[h]
\includegraphics[width=5cm]{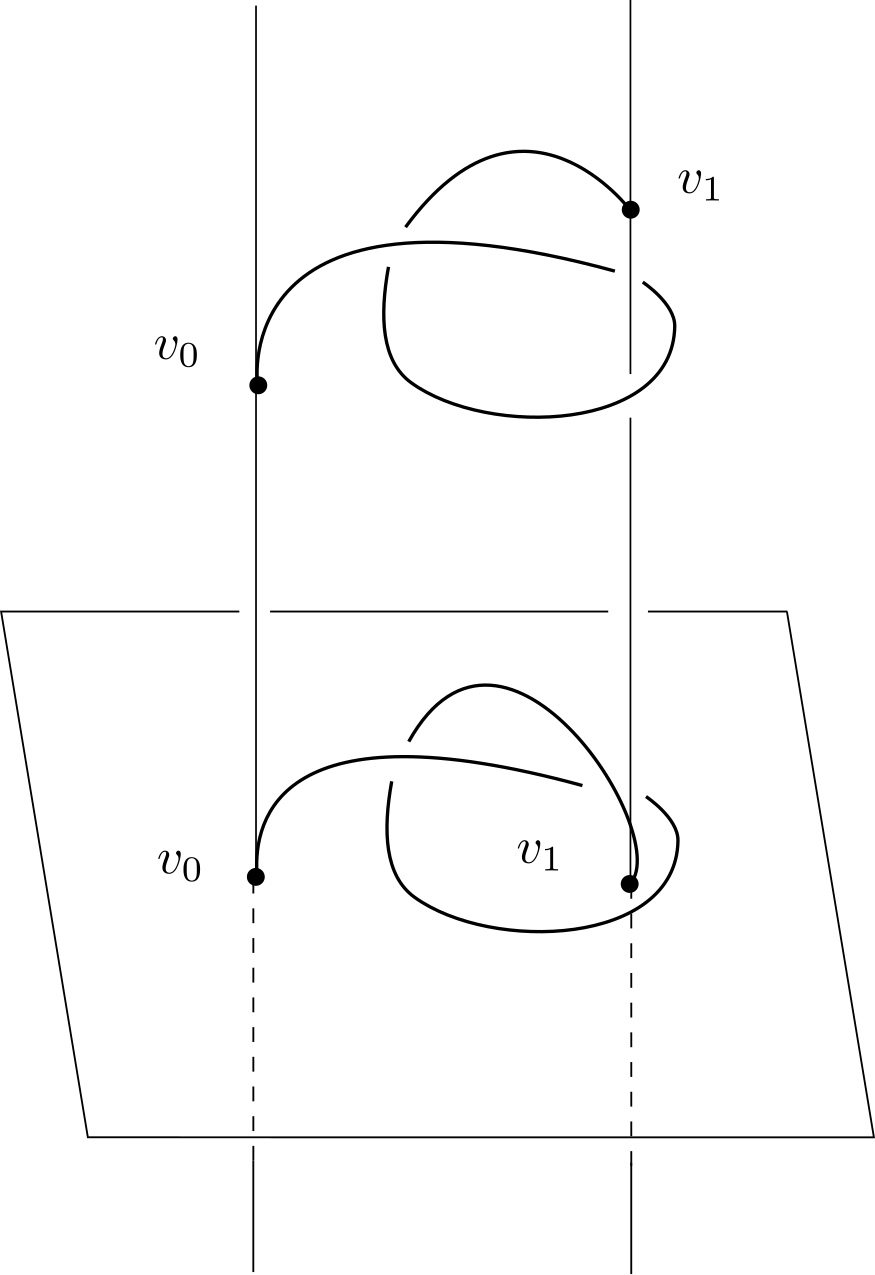}
\caption{On the top, the curve in $\mathbb{R}^3$ obtained from the knotoid diagram on the bottom.}
\label{fig:differentproj}
\end{figure}

There is a $1$-$1$ correspondence (see \cite{knotoids}, Theorem $2.1$ and Corollary $2.2$) between the set of oriented knotoids in $\mathbb{R}^2$ and the set of line-isotopy classes of smooth oriented arcs in $\mathbb{R}^3$, with endpoints attached to two lines perpendicular to the $xy$ plane.

Similarly, given a knotoid $k$ in $\mathbb{K}(S^2)$ we can construct an embedded arc in $S^2 \times I$ with the same procedure. Now the endpoints are attached to two lines perpendicular to the sphere $S^2 \times \{pt\}$. Theorem $2.1$ and Corollary $2.2$ in \cite{knotoids} extend naturally to this setting. 

\begin{rmk}
The notion of line isotopy between open arcs is explored also in \cite{railknotoids}, where \emph{rail knotoids} are introduced. As for line isotopy classes of embedded arcs, rail knotoids corresponds bijectively with planar knotoids. Moreover, the rail knotoid approach is related to the study of genus $2$ handlebodies.  
\end{rmk}

\begin{rmk}\label{rotation3d}
There is an easy way to visualise rotation of knotoids in this setting. Indeed, consider a knotoid $k$ and its rotation $k_{\textbf{rot}}$. If we view the knotoids as embedded arcs in $\mathbb{R}^3$ with endpoints in $t\times \mathbb{R}, h \times \mathbb{R}$, then they differ from each other by applying a rotation through an angle $\pi$ along a horizontal line going through $t\times \mathbb{R}$ and $h \times \mathbb{R}$.
\end{rmk}

\subsection{Knotoids and $\theta$-curves}\label{miserve2}
Consider a knotoid as an embedded curve in $S^2 \times I$, with endpoints attached to the two special lines. We can compactify the manifold by collapsing $S^2 \times \partial I$ to two points, obtaining an embedded curve in $S^3$ with endpoints lying on an unknotted circle, as in Figure \ref{fig:compattifica}.

\begin{figure}[h]
\includegraphics[width=9cm]{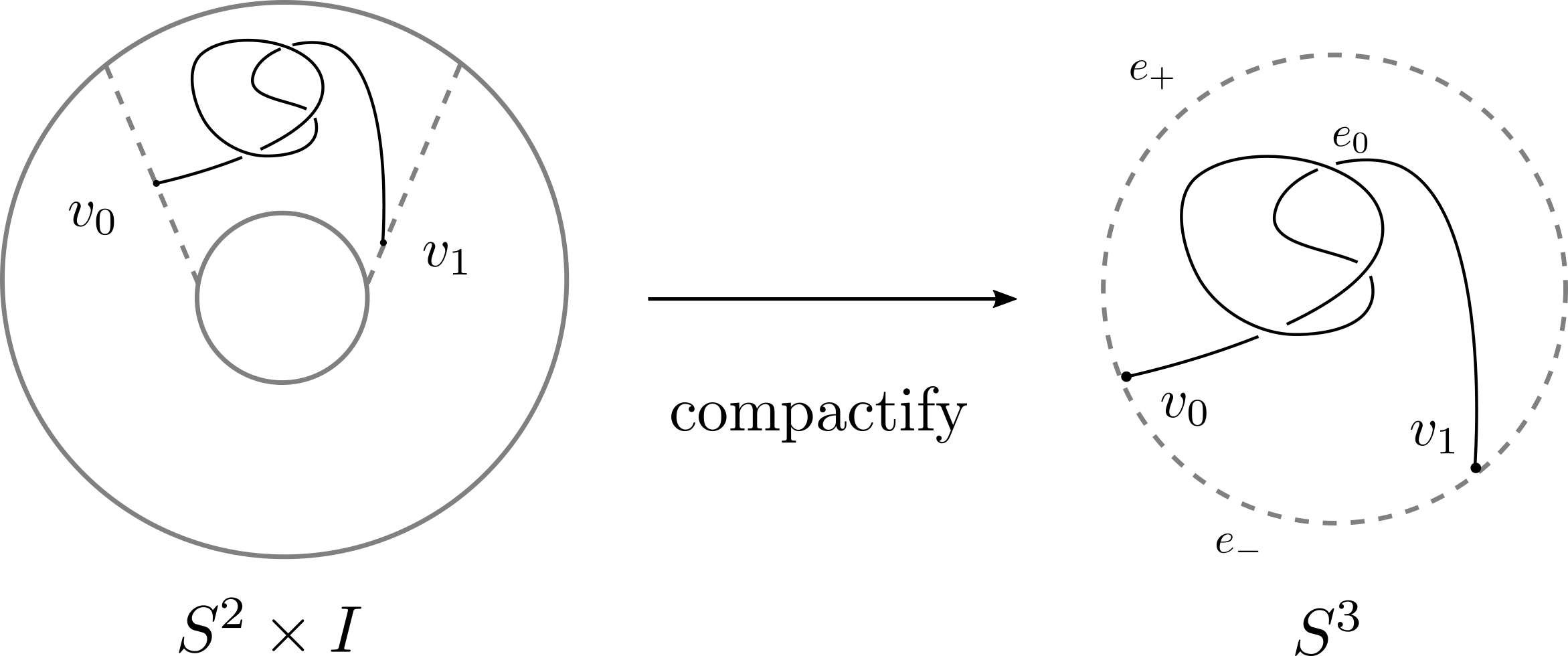}
\caption{On the left, a knotoid seen as an embedded curve in $S^2 \times I$, with endpoints lying on the dotted lines. By collapsing  $S^2 \times \partial I$ to two points, we obtain an embedded arc in $S^3$, with endpoints lying on a dotted circle (the projection of the dotted lines). }
\label{fig:compattifica}
\end{figure}

The union of the embedded curve with this unknotted circle is a $\theta$-curve. 

\begin{defi}
A \emph{labelled} $\theta$\emph{-curve} is a graph embedded in $S^3$ with $2$ vertices, $v_0$ and $v_1$, and $3$ edges, $e_+, e_-$ and $e_0$, each of which joins $v_0$ to $v_1$. The curves $e_0 \cup e_-$, $e_- \cup e_+$ and $e_0 \cup e_+$ are called the \emph{constituent knots} of the $\theta$-curve. We will call two labelled $\theta$-curves \emph{isotopic} if they are related by an ambient isotopy preserving the labels of the vertices and the edges. A $\theta$-curve is called \emph{simple} if its constituent knot $e_- \cup e_+$ is the trivial knot.
 
\end{defi}

Thus, we can associate a simple labelled $\theta$-curve to a knotoid $k \in \mathbb{K}(S^2)$, whose vertices are the endpoints of $k$ and with $e_0 =k$. We label the remaining edges of the $\theta$-curve in the following way. The edge containing the image of $S^2 \times \{ 1\}$ under the collapsing map is labelled $e_+$. The edge containing the image of $S^2 \times \{ -1 \}$ is labelled $e_-$. We will call the unknotted circle $e_- \cup e_+$ the \emph{preferred constituent unknot} of the $\theta$-curve.
It is shown in \cite{turaev} that this construction induces a well defined map $t$ between the set of oriented knotoids $\mathbb{K}(S^2)$ and the set $\Theta^s$ of isotopy classes of simple labelled $\theta$-curves. Moreover, $\Theta^s$ endowed with the vertex-multiplication operation (for a definition of vertex-multiplication see \emph{e.g. \cite{theta2}}) is a semigroup, and the following theorem holds.

\begin{thm}[Theorem $6.2$ in \cite{turaev}]\label{thm:semigroupsiso}
The map $t: \mathbb{K}(S^2) \longrightarrow \Theta^s$ is a semigroup isomorphism.
\end{thm}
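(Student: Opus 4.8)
The plan is to factor $t$ through the three‑dimensional model of knotoids, build an explicit two‑sided inverse, and check multiplicativity directly from the definitions. Recall from Section~\ref{subsec:embedded} that $\mathbb{K}(S^2)$ is canonically identified with the set of line‑isotopy classes of embedded oriented arcs in $S^2\times I$ whose endpoints lie on the two fixed vertical lines $\ell_0=\{x_0\}\times I$ and $\ell_1=\{x_1\}\times I$ (the $S^2$‑version of Theorem~2.1 and Corollary~2.2 of \cite{knotoids}). Collapsing $S^2\times\{0\}$ and $S^2\times\{1\}$ to points $c_-$ and $c_+$ carries $(S^2\times I,\ \ell_0\cup\ell_1)$ to $(S^3,\,U)$ with $U$ an unknotted circle through $c_\pm$, and carries the arc to the edge $e_0$ of a labelled $\theta$‑curve $t(k)$, the remaining edges $e_+,e_-$ being the two sub‑arcs of $U$ cut off by $v_0,v_1$ and labelled according to whether they contain $c_+$ or $c_-$. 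A line isotopy may be taken to be the identity near $S^2\times\partial I$, so it extends across the collapse to a label‑preserving ambient isotopy of $(S^3,U)$; hence $t$ is well defined, and $t(k)\in\Theta^s$ since $e_+\cup e_-=U$ is unknotted.

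\textbf{Inverse and bijectivity.} Given $\theta\in\Theta^s$, pick interior points $c_+\in e_+$, $c_-\in e_-$. The circle $e_+\cup e_-$ is unknotted and carries the four points $v_0,c_+,v_1,c_-$ in this cyclic order; any such marked unknot can be ambient‑isotoped in $S^3$ to the standard round circle with these points in a standard position. Deleting $c_\pm$ identifies $S^3\setminus\{c_\pm\}$ with $S^2\times\mathbb{R}$ so that the arcs of $e_+\cup e_-$ through $v_0$ and through $v_1$ become $\ell_0,\ell_1$ and $e_0$ becomes an embedded arc from $v_0$ to $v_1$; projecting it generically to $S^2$ gives a knotoid $s(\theta)$ with $t(s(\theta))=\theta$ (the $e_\pm$ labels match because we removed $c_\pm$ from $e_\pm$). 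This proves surjectivity. For injectivity, suppose a label‑preserving ambient isotopy $\Phi$ of $S^3$ carries $t(k_1)$ to $t(k_2)$. Since $\Phi$ fixes the edges $e_\pm$ together with their endpoints, one may first arrange $\Phi$ to fix chosen points $c_\pm\in e_\pm$ and then to fix small balls $B_\pm$ around them pointwise; restricting $\Phi$ to $S^3\setminus(\mathrm{int}\,B_+\cup\mathrm{int}\,B_-)\cong S^2\times I$ yields a line isotopy from $k_1$ to $k_2$, so $k_1=k_2$ in $\mathbb{K}(S^2)$. (The same reasoning shows that $s$ does not depend on the choices, so $s$ is a genuine inverse of $t$.)

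\textbf{Multiplicativity.} The product $k_1\cdot k_2$ is obtained by excising regular neighbourhoods of the head of $k_1$ and the tail of $k_2$ from the respective $2$‑spheres and regluing so as to match the single intersection point of the arc with each boundary circle. In the collapsed picture this excision is the removal of a ball neighbourhood of the corresponding vertex of $t(k_1)$ and of $t(k_2)$ — whose boundary sphere meets the $\theta$‑curve in one point on each of $e_0,e_+,e_-$ — and the regluing matches these strands edge by edge; this is exactly the vertex‑multiplication of $\theta$‑curves. The preferred constituent knot of the product is the connected sum of the two unknots $e_+\cup e_-$, hence again an unknot, so the product $\theta$‑curve is simple, and comparing gluings edge by edge gives $t(k_1\cdot k_2)=t(k_1)\cdot t(k_2)$. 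A bijective semigroup homomorphism is an isomorphism, so the theorem follows.

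\textbf{Main difficulty.} The delicate points are the ``standardness'' inputs used for the inverse map: that an unknot in $S^3$ carrying four cyclically ordered marked points is unique up to ambient isotopy, with the consequent control over the identification $S^3\setminus\{c_\pm\}\cong S^2\times\mathbb{R}$; and the modification of a $\theta$‑curve‑preserving isotopy into one that is the identity near the cone points. This is where the simplicity hypothesis is genuinely needed, and where one must verify that no extra symmetry of $(S^2\times I,\ \ell_0\cup\ell_1)$ — interchanging the two lines, or reversing the $I$‑coordinate — is introduced; the labels on the vertices and on the edges $e_+,e_-$ are precisely what rule these out.
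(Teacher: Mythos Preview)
The paper does not give its own proof of this statement; it is quoted as Theorem~6.2 of Turaev~\cite{turaev}, and only the inverse map is sketched (put the $\theta$-curve in \emph{standard position} with $e_+$ in the upper half-space and $e_-$ in the lower one, both projecting to the same planar arc, then project $e_0$). Your construction of the inverse by deleting interior points $c_\pm\in e_\pm$ and identifying $S^3\setminus\{c_\pm\}$ with $S^2\times\mathbb{R}$ is the same idea in different packaging, and your outline of well-definedness, bijectivity and multiplicativity follows Turaev's argument; so there is nothing further in this paper to compare against, and your proposal is a reasonable fleshing-out of the cited result.
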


The inverse map $t^{-1}$ associates a knotoid in $\mathbb{K}(S^2)$ to a labelled simple $\theta$-curve in the following way. Any element $\theta$ of $\Theta^s$ can be isotoped to lie in $\mathbb{R}^3 \subset S^3 $, with the edge $e_+$ contained in the upper half-space, and $e_-$ in the lower one, in such a way that they both project to a same arc $a$ in $\mathbb{R}^2$ connecting $v_0$ to $v_1$. We say that the $\theta$-curve is in \emph{standard} position. The projection of the edge $e_0$ to $\mathbb{R}^2$ defines the associated knotoid (see \cite{turaev} for more details).

It should be clear that the $\theta$-curves associated to a knotoid and its reverse differ by exchanging the labels of the vertices. Consider now a knotoid $k$ and its rotation $k_{\textbf{rot}}$. Their $\theta$-curves differ from each other simply by swapping the labels on the $e_-$ and $e_+$ edges, and leaving all other labels unchanged. To see this, arrange the $\theta$-curve $t(k)$ in standard position. Suppose that we swap the labels $e_-$ and $e_+$. Then, we can isotope the $\theta$-curve in a way that reinstates $e_+$ as lying above the horizontal plane and $e_-$ as lying below it. After projecting, we get the knotoid $k_{\textbf{rot}}$. 
Thus, swapping the labels of the edges $e_-$ and $e_+$ takes the $\theta$-curve $t(k)$ to $t(k_{\textbf{rot}})$, see Figure \ref{fig:thetainvertite2}.

\begin{figure}[h]
\includegraphics[width=9cm]{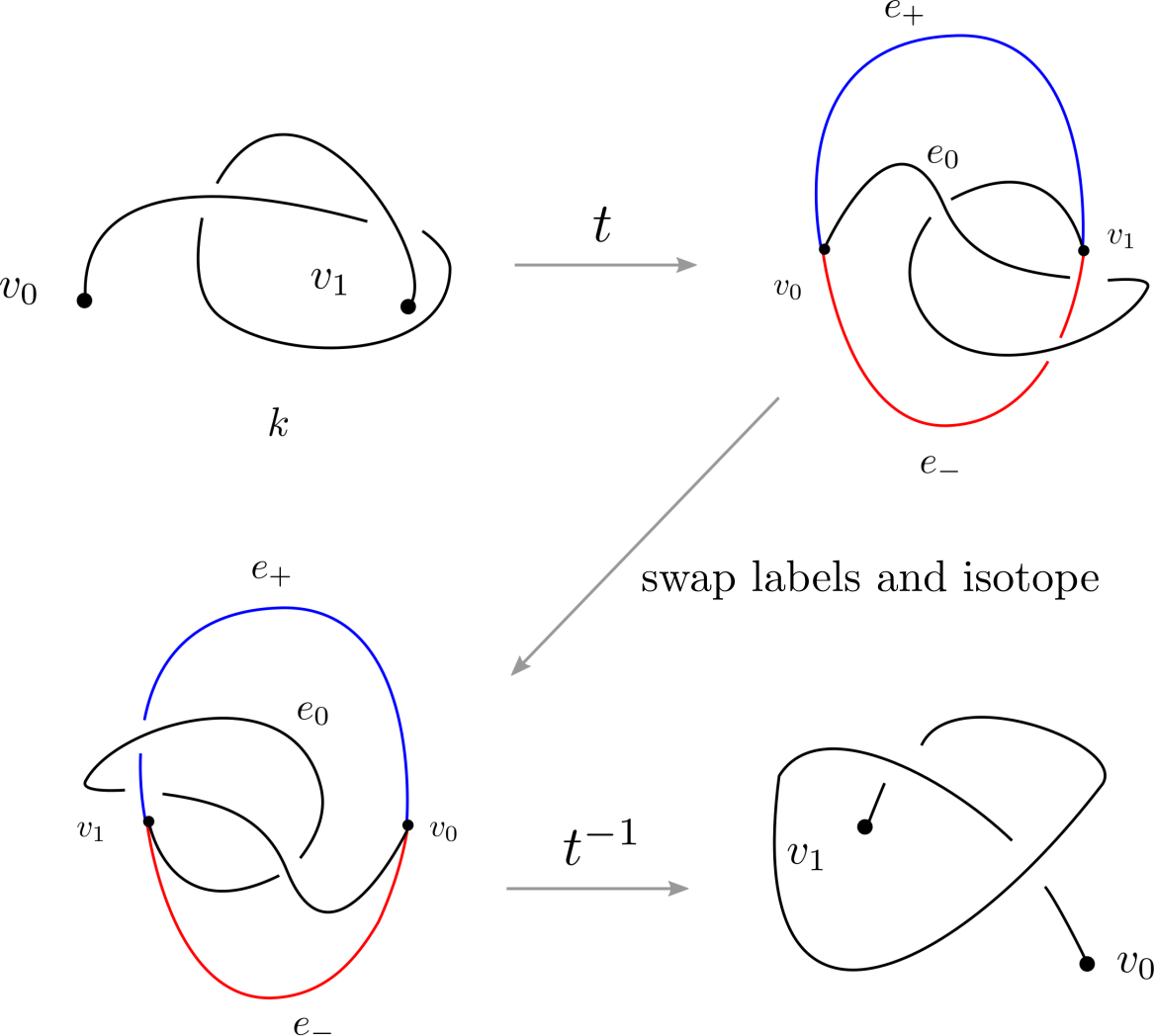}
\caption{A knotoid and its rotation are associated with $\theta$-curves differing from each other by swapping the $e_-$ and $e_+$ labels.}
\label{fig:thetainvertite2}
\end{figure}

Call $ \Theta^s /_\sim $ the set of simple $\theta$-curves up to relabelling the two vertices. The isomorphism $t$ of Theorem \ref{thm:semigroupsiso} gives a bijection $$t_{\sim} : \mathbb{K}(S^2)/_{\sim} \longrightarrow \Theta^s /_\sim $$ between unoriented knotoids and elements of $ \Theta^s /_\sim $. Furthermore, $t$ also induces a bijection $$t_{\approx}:\mathbb{K}(S^2)/_{\approx} \longrightarrow  \Theta^s/_{\approx}$$ from the set of unoriented knotoids up to rotation and the set of $\theta$-curves up to relabelling the edges $e_-$ and $e_+$ and the vertices. This latter bijection will be the key element in proving Theorem \ref{thm:completo}.

\subsection{Double branched covers}\label{miserve}
Consider a planar knotoid $k$, thought of as an embedded arc in the cylinder $D^2 \times I$. The double cover of $D^2 \times I$ branched along the special vertical arcs where the endpoints lie is the solid torus $S^1 \times D^2$. Denote the branched covering map by $$p:S^1 \times D^2 \longrightarrow D^2 \times I $$ 

Figure \ref{rivestimento} shows how to construct this double branched cover by ``cuts''. More precisely, we can cut the cylinder $D^2 \times I$ along two disks (these are the two dashed arcs times $I$, shown in Figure \ref{rivestimento}), take two copies of the obtained object, and then glue them as shown in the picture. For more details on how to construct branched covers see \emph{e.g.}~\cite[Chapter $10.B$]{rolfsen}.
The pre-image $p^{-1}(k)$ of the knotoid in the double branched cover is a knot inside the solid torus $S^1 \times D^2$. The knot type of this branched cover is a knotoid invariant; in particular by composing the branched covering construction with any invariant of knots in the solid torus (see \emph{e.g.}~\cite{sophia}, \cite{maciei} and \cite{hoste}) we obtain a new knotoid invariant. Note that by definition, the lifts of line-isotopic embedded arcs are ambient isotopic knots, since isotopies of $k$ preserving the branching set lift to equivariant isotopies.

\begin{rmk}\label{rmk:diagrammariv}
From the knotoid diagram obtained by projecting $k$, it is possible to construct a diagram in the annulus $S^1 \times I$ for $p^{-1}(k)$ by taking the double cover of the disk $D^2 \times \{pt\}$ branched over the endpoints of the diagram, as shown in Figure \ref{rivestimento}.
 
\end{rmk}

Similarly, given a knotoid $k \in \mathbb{K}(S^2)$ and the associated $\theta$-curve $t(k)$ in $S^3$, the pre-image of $k$ under the double cover of $S^3$ branched along the preferred constituent unknot of $t(k)$ is a knot in $S^3$.  Double branched covers of simple $\theta$-curves have been extensively studied in \cite{theta}, whose main results are discussed and used in Section \ref{sec:dbctheta}.
\begin{rmk}\label{rmk:diagrammarivsf}
Consider a diagram representing $k \in \mathbb{K}(S^2)$: we can obtain a diagram for the lift of $k$ by taking the double cover of $S^2$ branched along the endpoints.
\end{rmk}
\begin{figure}[h] 
\includegraphics[width=8cm]{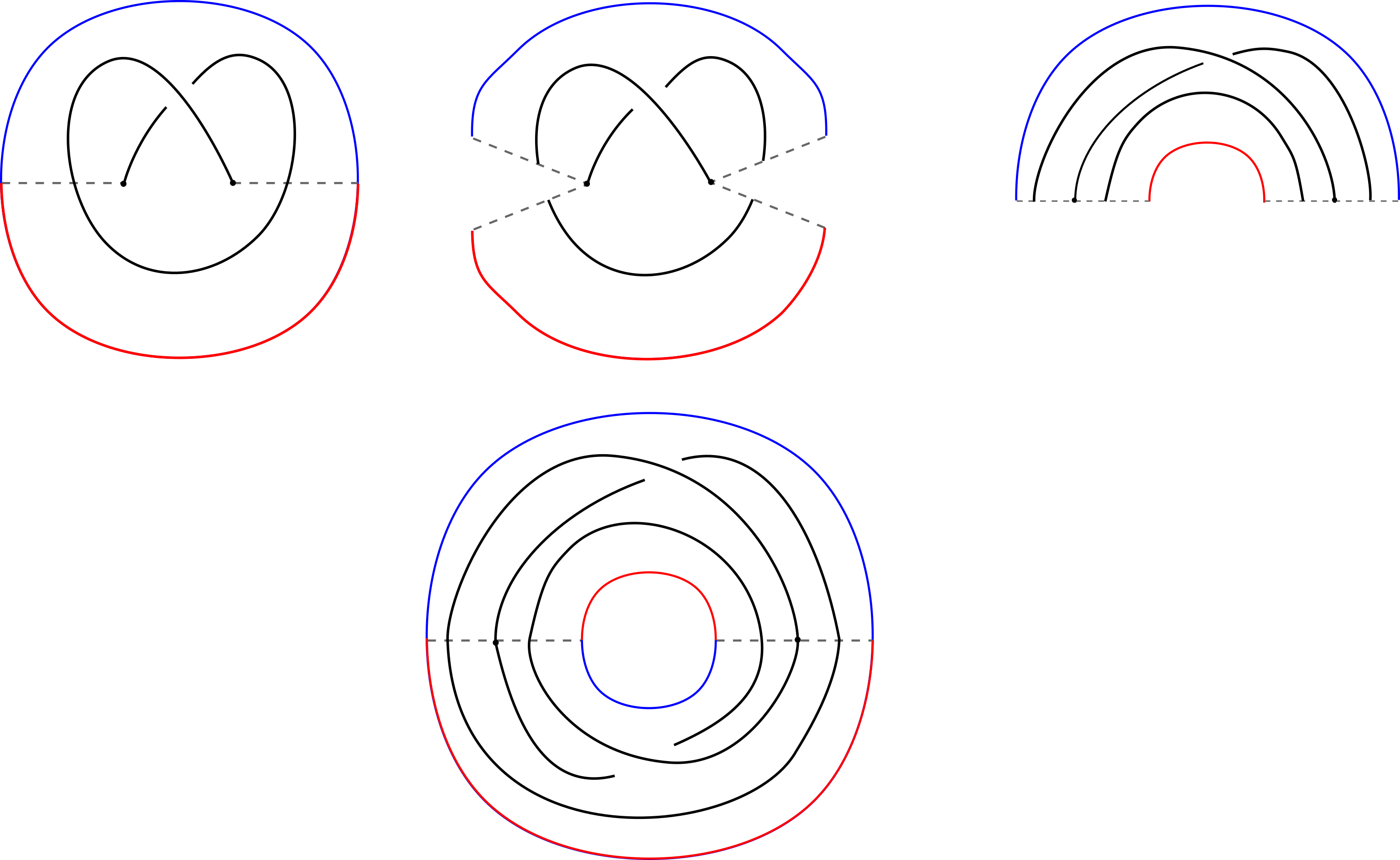}
\caption{The two-fold branched cover of $D^2 \times I$ by $S^1 \times D^2$ can be described by ``cuts'' (see \emph{e.g.} \cite[Chapter $10.B$]{rolfsen} for a reference). The picture shows the product $D^2 \times I$ seen from above. The red and blue circle in $D^2 \times \{1\}$ (the boundary of $D^2 \times \{1\}$) lifts to two parallel longitudes of the solid torus.}
\label{rivestimento}
\end{figure}

Call $\mathcal{K}(S^1 \times D^2)$ and $\mathcal{K}(S^3)$ the sets of knots in the solid torus and in $S^3$ respectively, taken up to the appropriate ambient isotopies. Thus, we have the following maps induced by the double branched covers: $$ \gamma_T : \mathbb{K}(\mathbb{R}^2) \longrightarrow \mathcal{K}(S^1 \times D^2) $$ $$ \gamma_S : \mathbb{K}(S^2) \longrightarrow \mathcal{K}(S^3) $$

\begin{rmk}\label{rmk:samelift}
Recall that two knotoids $k$ and $k_{\textbf{rot}}$ that differ by a rotation lift to $\theta$-curves differing from each other simply by swapping the labels on the $e_-$ and $e_+$ edges (see the discussion in Section \ref{miserve2}). The double branched covers of such $\theta$-curves produce isotopic knots. Thus, $k$ and $k_{\textbf{rot}}$ have the same image under the map $\gamma_S$. The same is true for a knotoid $k$ and its reverse $-k$.
\end{rmk}

Consider a circle in the boundary of $D^2 \times \{pt\}$, as the red and blue one in Figure \ref{rivestimento}. This lifts to two parallel longitudes of the solid torus. We can then define a natural embedding $e$ of the solid torus in $S^3$ by sending any of these longitudes to the preferred longitude of the solid torus in $S^3$ arising as the neighbourhood of the standard unknot. By composing $\gamma_T$ with $e$ we can associate to a knotoid in $\mathbb{K}(\mathbb{R}^2)$ a knot in $S^3$. 

\begin{prop}\label{prop:s2}
Given a knotoid $k$ in $\mathbb{K}(\mathbb{R}^2)$, $e(\gamma_T(k)) = \gamma_S(\iota(k))$. Similarly, given $k \in \mathbb{K}(S^2)$ take any planar representative $k^{pl}$ of $k$. Then, the knot type of $e(\gamma_T(k^{pl}))$ does not depend on the particular choice of $k^{pl}$.

\end{prop}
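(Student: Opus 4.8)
The plan is to localise the double branched cover near the knotoid arc: I will show that, once localised, $\gamma_S$ and $\gamma_T$ produce the same subset of a solid torus, and then check that the way this solid torus sits inside the double branched cover of $(S^3,U)$ is exactly the standard embedding $e$. First I would put the $\theta$-curve $t(\iota(k))$ in the form coming from Section~\ref{miserve2}: $k$ is embedded in $S^2 \times I$ with its endpoints on two vertical arcs, $S^2 \times \partial I$ is collapsed to two points $pt_0,pt_1$, and the preferred constituent unknot $U = e_- \cup e_+$ is the image of the two vertical arcs. Deleting small balls around $pt_0$ and $pt_1$ recovers $S^2 \times I$, inside which $U$ is the pair of properly embedded vertical arcs $(\{t\}\cup\{h\}) \times I$, while $k = e_0$ lies in the interior of the cylinder $D^2 \times I$ (with a planar diagram of $k$ drawn on $D^2$), its endpoints on the branch arcs. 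By definition the double cover of $D^2 \times I$ branched over those arcs is the solid torus $S^1 \times D^2$ with branching map $p$, and $\gamma_T(k) = p^{-1}(k)$ lies in its interior. Since the double branched cover $\widetilde{S^3}$ of $S^3$ over $U$ restricts over $\mathrm{int}(D^2 \times I)$ to the map $p$ (here one uses that the double cover of $S^2$ branched over two points is connected, so each $S^2 \times \{i\}$ lifts to a single sphere and the collapse, equivalently the deletion and regluing of the small balls, behaves well), the knot $\gamma_S(\iota(k))$ — the preimage of $k=e_0$ in $\widetilde{S^3}$ — coincides as a subset with $\gamma_T(k) \subset S^1 \times D^2 \subset \widetilde{S^3}$.

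It then remains to identify $\widetilde{S^3}$ with $S^3$ so that $S^1 \times D^2$ is carried to $e(S^1 \times D^2)$. Writing $S^2 = D^2 \cup D^2_\infty$ with both branch points in $\mathrm{int}(D^2)$, the double cover of $S^2$ branched over them is the sphere obtained by capping the annulus $A$ (the double cover of $D^2$) with the two disks $D^2_{\infty,\pm}$ lifting $D^2_\infty$; hence the double branched cover of $(S^2 \times I, U)$ is $(S^1 \times D^2) \cup (D^2_{\infty,+} \times I) \cup (D^2_{\infty,-} \times I)$, and $\widetilde{S^3}$ is obtained from it by gluing two $3$-balls (the double branched covers of the two deleted balls) along its two boundary spheres, so $\widetilde{S^3} \cong S^3$. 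The complement of $S^1 \times D^2$ in $\widetilde{S^3}$ is the union of those two balls with the two slabs $D^2_{\infty,\pm} \times I$ — two balls joined by two $1$-handles — hence a solid torus, so $S^1 \times D^2$ is an unknotted solid torus giving the genus-one Heegaard splitting of $S^3$. Finally, a curve used to define $e$, namely one of the two lifts of $\partial(D^2 \times \{pt\})$, is a component $\partial D^2_{\infty,\pm} \times \{pt\}$ of $\partial A \times \{pt\}$, and it bounds the meridian disk $D^2_{\infty,\pm} \times \{pt\}$ in the complementary solid torus, so it is the $0$-framed (preferred) longitude. As this is the defining property of $e$, there is a homeomorphism $\widetilde{S^3} \to S^3$ taking $S^1 \times D^2$ to $e(S^1 \times D^2)$ and restricting on it to a map isotopic to $e$; applied to the equality of subsets from the first step this gives $e(\gamma_T(k)) = \gamma_S(\iota(k))$, the first assertion.

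The second assertion then follows formally: if $k_1, k_2 \in \mathbb{K}(\mathbb{R}^2)$ are two planar representatives of the same $k \in \mathbb{K}(S^2)$, i.e. $\iota(k_1) = \iota(k_2) = k$, then, $\gamma_S$ being well defined on $\mathbb{K}(S^2)$, we get $e(\gamma_T(k_1)) = \gamma_S(\iota(k_1)) = \gamma_S(\iota(k_2)) = e(\gamma_T(k_2))$, and so the knot type of $e(\gamma_T(k^{pl}))$ does not depend on the chosen planar representative $k^{pl}$.

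The step I expect to be the main obstacle is the framing bookkeeping in the second paragraph: one must be sure that the completion of the planar solid-torus cover is exactly $e$ and not $e$ post-composed with some power of a Dehn twist along the boundary torus, which would change the chosen longitude and hence, potentially, the knot type of the output. Identifying the lift of $\partial(D^2 \times \{pt\})$ with the longitude bounding a disk in the complementary solid torus is precisely what rules this out; the rest is routine cutting and pasting, the one point to keep honest being the compatibility of the branched cover with the collapsing map, which rests on the connectedness of the double cover of $S^2$ branched over two points.
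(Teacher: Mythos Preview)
Your argument is correct and shares the same underlying idea as the paper's proof: the double branched cover of the disk (respectively the cylinder) over the two marked points extends to the double branched cover of the sphere (respectively $S^3$) over the same two points, so the planar lift sits inside the spherical lift. The paper, however, carries this out purely at the level of diagrams: it simply observes that the annulus, as the double cover of $D^2$ branched over the endpoints, is the restriction of the double cover of $S^2$ branched over the same two points (Figure~\ref{rivsfera}), and that spherical isotopies downstairs therefore lift to spherical isotopies upstairs. Your version instead works directly in three dimensions, explicitly decomposing $\widetilde{S^3}$ as the solid torus $S^1\times D^2$ glued to a complementary solid torus built from the slabs $D^2_{\infty,\pm}\times I$ and two balls, and then verifying that the lift of $\partial(D^2\times\{pt\})$ bounds a meridian disk of the complement and is hence the preferred longitude. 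This buys you an honest check of the framing --- precisely the point you flag in your last paragraph --- which the paper leaves implicit in its picture; on the other hand the diagrammatic argument is shorter and makes the second assertion (invariance under change of planar representative) immediate, since isotopies of knotoid diagrams on $S^2$ visibly lift to isotopies of knot diagrams on the covering $S^2$.
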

In other words, the knot type in $S^3$ of the lift a planar knotoid $k$ depends only on its class $\iota(k) \in  \mathbb{K}(S^2)$.
\begin{proof}
Consider the diagram for $k$ arising from the projection onto $D^2 \times \{pt\}$. The $2$-fold cover of the disk branched along the endpoints can be viewed as the restriction of the $2$-fold cover of a $2$-sphere branched along the endpoints, see Figure \ref{rivsfera}.
Thus, isotopies on the sphere below translate into isotopies on the sphere for the lifted diagram.

\end{proof}

\begin{rmk}\label{differenza}
Note that, as shown in Figure \ref{fig:differenza}, non-equivalent knotoids in  $\mathbb{K}(\mathbb{R}^2) $ that are equivalent in  $\mathbb{K}(S^2) $ might lift to different knots in the solid torus. 
 
\end{rmk}

\begin{figure}[h]
\includegraphics[width=7cm]{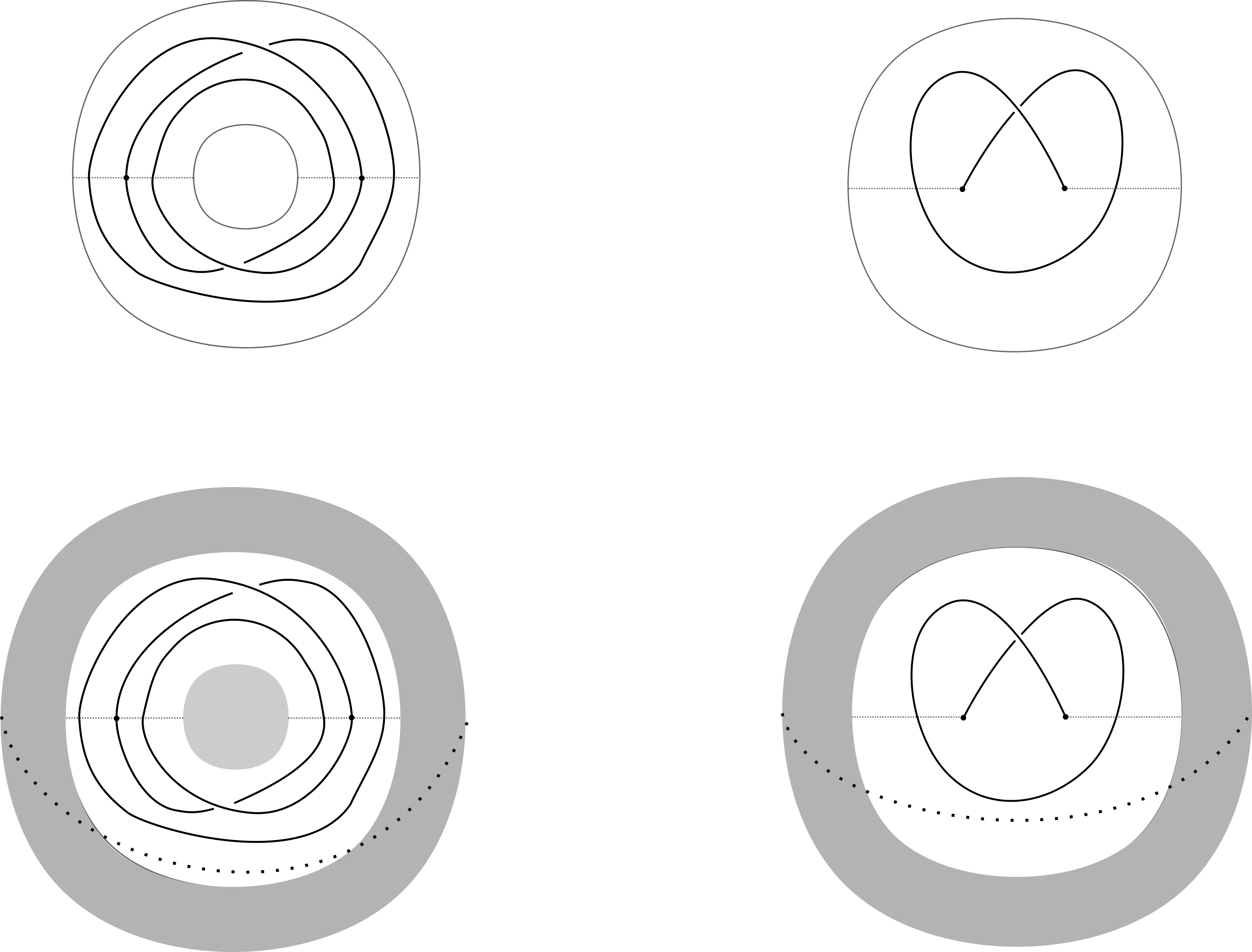}
\caption{On the top, the annulus as double branched cover of the disk. On the bottom, the extension of that cover to a double branched cover of the $2$-sphere over the $2$-sphere. Isotopies on the sphere on the left-side translate into isotopies on the sphere for the lifted diagrams.}
\label{rivsfera}
\end{figure}

\begin{figure}[h]
\includegraphics[width=9cm]{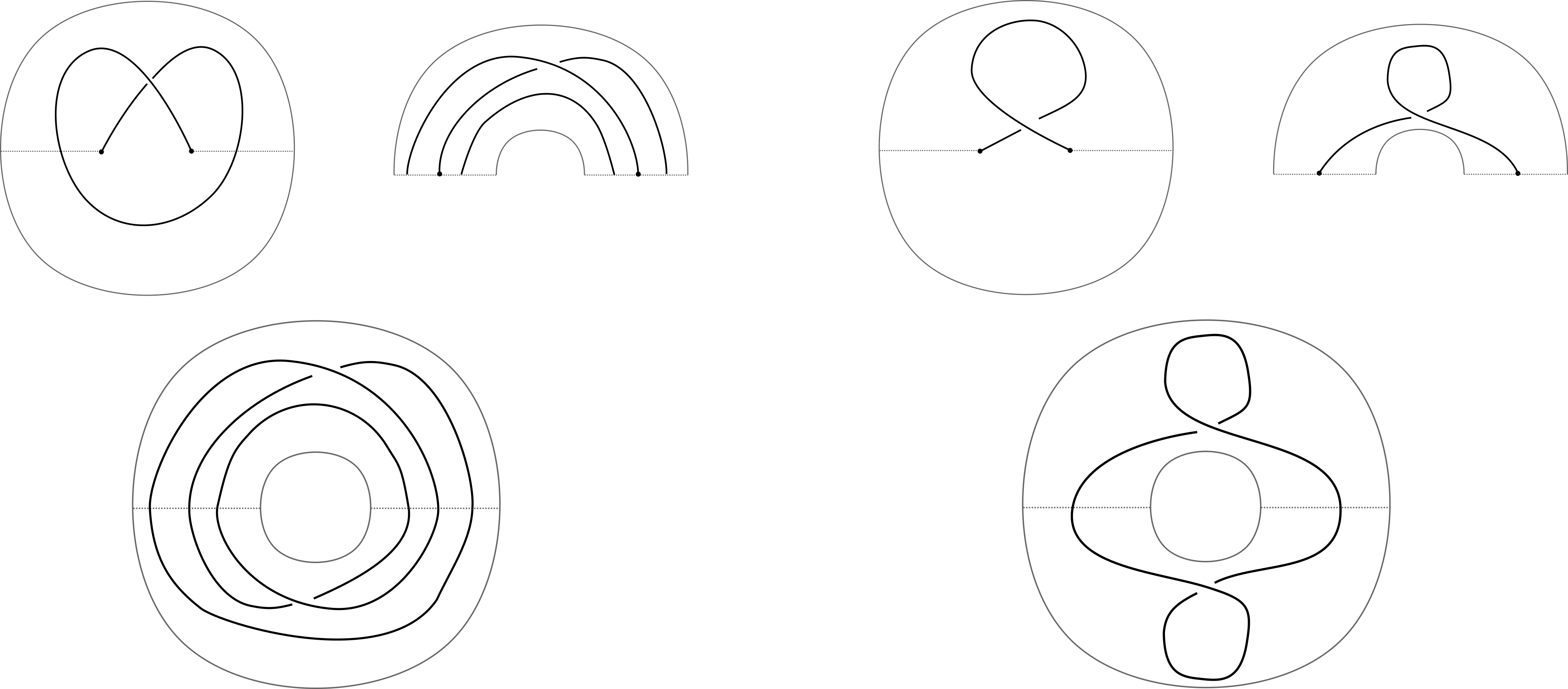}
\caption{Two knotoid diagrams $k_1$ and $k_2$ that represent different classes in $\mathbb{K}(\mathbb{R}^2) $ such that $\iota(k_1) =\iota(k_2)$ lift to different knots in the solid torus. $\gamma_T(k_2)$ is the core of the solid torus, while $\gamma_T(k_1)$ is the $2_3$ knot in Gabrovsek and Mroczkowski's table for knots in the solid torus (see \cite{maciei}), with winding number equal to $3$. However, $e(\gamma_T(k_1)) = e(\gamma_T(k_2)) = \bigcirc$. }
\label{fig:differenza}
\end{figure}

For knot type knotoids the behaviour under the maps $\gamma_S$ and $\gamma_T$ is unsurprisingly trivial.

\begin{prop}\label{liftknottype}
Consider an oriented knot-type knotoid $K$. The lift $\gamma_S(K)$ is the connected sum $K' \# rK'$, where $K'$ is the knot naturally associated to $K$ (with orientation induced by $K$) and $rK'$ is its inverse. 
\end{prop}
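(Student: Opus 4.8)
The plan is to realise $\gamma_S(K)$ explicitly inside the double branched cover and to split it with a $2$-sphere. Write $K=\alpha(K')$, so that a representative of $K$ is an arc $e_0$ with $K'=e_0\cup\beta$ for a crossingless arc $\beta$ joining the two endpoints $v_0,v_1$; since $K$ is a knot-type knotoid we may take $\beta$ inside the region of the diagram that contains both endpoints. I would arrange the edges $e_+,e_-$ of the $\theta$-curve $t(K)$ to be two push-offs of $\beta$, one above and one below the plane, so that the branch locus is the unknot $U=e_-\cup e_+$ and $e_0\cup e_+\cong e_0\cup e_-\cong e_0\cup\beta=K'$. Because $U$ is unknotted, its double branched cover $p\colon\widetilde{S^3}\to S^3$ has $\widetilde{S^3}\cong S^3$ (see \cite[Ch.~$10.B$]{rolfsen}), and the deck involution $\tau$ fixes $\widetilde U:=p^{-1}(U)\cong S^1$ pointwise, acting near each of its points as the rotation by $\pi$ about a line.

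First I would take $\Delta$ to be the thin band between $e_+$ and $e_-$: a disk with $\partial\Delta=U$, isotopic to $\beta$, and, if chosen thin enough, meeting $e_0$ exactly in $\partial e_0=\{v_0,v_1\}\subset\partial\Delta$. Its preimage $S:=p^{-1}(\Delta)$ is a $2$-sphere, namely two disks glued along $\widetilde U$, and it separates $\widetilde{S^3}=S^3$ into two balls $B_1,B_2$ which $\tau$ interchanges. Since $e_0$ meets $\Delta$ only in $\partial e_0$, the lift $\widetilde K:=p^{-1}(e_0)=\gamma_S(K)$ meets $S$ transversally in exactly the two points $\widetilde v_0,\widetilde v_1\in\widetilde U$; hence $t_i:=\widetilde K\cap B_i$ is a single arc and $\tau$ carries the $1$-string tangle $(B_1,t_1)$ onto $(B_2,t_2)$. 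A $2$-sphere meeting a knot transversally in two points presents it as a connected sum, so $\widetilde K\cong J_1\# J_2$, where $J_i$ is the knot obtained from $(B_i,t_i)$ by capping $t_i$ off with a trivial arc in $S$.

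Next I would identify the summands. Take the capping arc to be the copy $\widetilde{e_+}:=(p|_{\widetilde U})^{-1}(e_+)\subset\widetilde U\subset\partial B_1$; then $p$ restricts to a homeomorphism taking $(B_1,\,t_1\cup\widetilde{e_+})$ to $(S^3,\,e_0\cup e_+)$ --- concretely, $B_1$ is a copy of $S^3$ cut open along $\beta$, carrying a copy of $e_0$, which is then re-closed along a push-off of $\beta$ --- so $J_1\cong e_0\cup e_+\cong K'$. As $\tau\colon B_1\to B_2$ is an orientation-preserving homeomorphism of balls sending $t_1$ to $t_2$, also $J_2\cong J_1\cong K'$ as unoriented knots. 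Now orient $\widetilde K$: since $\mathrm{Fix}(\tau)=\widetilde U$ meets $\widetilde K$ in the two points $\widetilde v_0,\widetilde v_1$ and $\tau$ is a rotation by $\pi$ near each of them, $\tau$ reverses $\widetilde K$, so along $\widetilde K$ one traverses $t_1$ in the direction induced by the orientation of $e_0$ and $t_2$ against it. Hence $J_1$ inherits the orientation that $K$ induces on $K'$ while $J_2$ inherits the opposite one, giving $\gamma_S(K)=\widetilde K\cong K'\# rK'$. (The conclusion is in any case insensitive to which summand is named $K'$, since $K'\# rK'$ is a reversible knot.)

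The step I expect to be the main obstacle is the identification of $J_1$ with $K'$: one has to treat the covering map $p$ carefully along the branch circle $\widetilde U$ --- equivalently, to spell out the ``cut $S^3$ open along $\Delta$, take two copies and reglue'' description of the branched cover --- in order to be sure that the capped tangle, regarded as a knot in the cover $S^3$, is genuinely the knot $e_0\cup e_+$ in the base $S^3$ and not, say, its mirror image. Choosing $\Delta$ to be a neighbourhood of the trivial arc $\beta$, as above, is precisely what makes this bookkeeping transparent.
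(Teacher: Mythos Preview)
Your argument is correct, but it takes a considerably more elaborate route than the paper. The paper's proof is a one-liner: invoking Proposition~\ref{prop:s2}, it chooses a planar diagram for the knot-type knotoid with both endpoints in the external region of the disk, so that the branch cuts run straight out to the boundary without meeting the diagram; the ``cuts'' description of the double cover (Figure~\ref{rivestimento}) then visibly produces two copies of the same knot diagram joined at the two endpoints, which is $K'\#rK'$ on sight.

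Your proof works directly in the $3$-dimensional picture instead: you realise the branch disk $\Delta$ as a thin band between $e_+$ and $e_-$, lift it to a separating $2$-sphere, and analyse the resulting $1$-string tangles. This is the same geometry viewed upstairs rather than in the diagram, and it has the virtue of making the connected-sum structure and the orientation reversal completely explicit without reference to a particular planar projection. The cost is that you must do the bookkeeping you flag in your last paragraph --- checking that $p|_{B_1}$ really identifies $(B_1,t_1)$ with the tangle $(S^3\text{ cut along }\Delta,\,e_0)$ --- whereas the diagrammatic argument sidesteps this entirely. One small wording issue: $p|_{B_1}$ is not literally a homeomorphism onto $(S^3,e_0\cup e_+)$, since it is $2$-to-$1$ on $\partial B_1\setminus\widetilde U$; what you mean (and what your parenthetical ``cut open and re-close'' makes clear) is that $B_1$ is canonically a copy of $S^3$ cut along $\Delta$, carrying a copy of $e_0$, and that regluing recovers the pair $(S^3,e_0\cup e_+)$.
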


\begin{proof}
Thanks to Proposition \ref{prop:s2} we can choose a planar diagram for $k$ in which the endpoints lie in the \emph{external region} of the disk, so that there are no intersections (apart from the endpoints themselves) between the diagram and the arc which define the cuts, and the statement is trivially true.  
\end{proof}

\subsection{Knotoids and strongly invertible knots}\label{sec:st}
Consider a knotoid $k \in \mathbb{K}(S^2)$ and its lift $\gamma_S(k)$ in $S^3$. The fact that $S^3$ is the double cover of itself branched along the preferred constituent unknot $U$ of $t(k)$ defines an orientation preserving involution $\tau$ of $S^3$, whose fixed point set is precisely the unknot $U$. The involution $\tau$ reverses the orientation of $\gamma_S(k)$, and the fixed point set intersects $\gamma_S(k)$ in exactly two points (the lifts of the endpoints of $k$). A knot with this property is called a \emph{strongly invertible knot} (a precise definition will be given in Section \ref{sec:strong}).

Since not every knot in $S^3$ is strongly invertible, this in particular implies that the maps $\gamma_S$ and $\gamma_T$ are not surjective.

\begin{rmk}
We could have inferred the non-surjectivity of the map $\gamma_T$ from the following observation. 

The winding number $[\gamma_T(k)] \in H_1(S^1\times D^2; \mathbb{Z})$ of the lift $\gamma_T(k)$ of a knotoid $k$ is always odd. This is true since by construction the lifted knot intersects the meridian disk containing the lifted branching points an odd number of times.
 
\end{rmk}

In Section \ref{sec:strong} we will use classical results on symmetry groups of knots to better understand the map $\gamma_S$ and to prove the $1$-$1$ correspondence of Theorem \ref{thm:completo}.

\subsection{Behaviour under forbidden moves}

A band surgery is an operation which deforms a link into another link.
\begin{defi}
Let $L$ be a link and $b: I \times I \longrightarrow S^3$ an embedding such that $L \cap b(I \times I) = b(I \times \partial I)$. The link $L_1 = (L \setminus b(I \times \partial I) ) \cup  b(\partial I \times I)$ is said to be obtained from $L$ by a \emph{band surgery} along the band $B=  b(I \times I)$, see Figure \ref{fig:bandsurg}
 
\end{defi}

Performing a band surgery on a link $L$ may change its number of components; band surgeries which leave unchanged the number of components are called $H(2)$-moves (see \emph{e.g.} \cite{band1}, \cite{band2}). 

\begin{figure}[h]
\includegraphics[width=5cm]{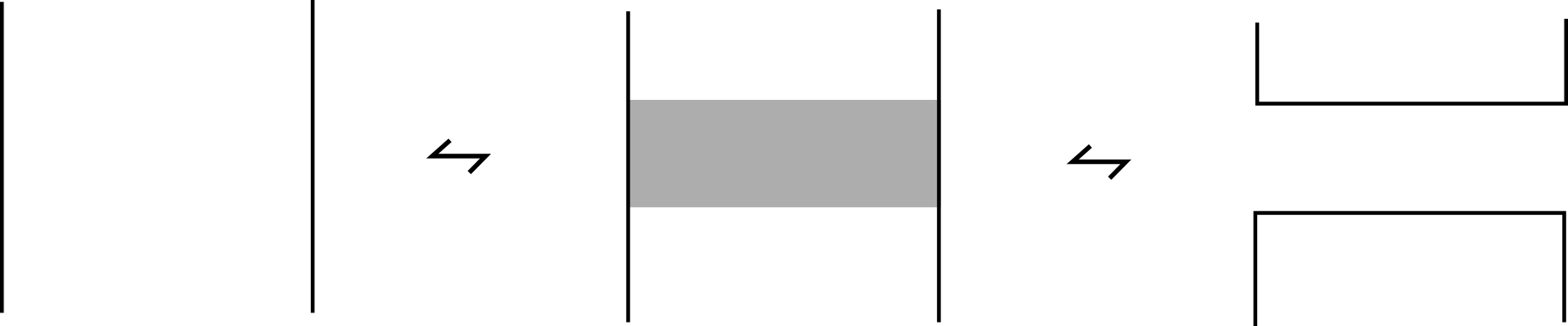}
\caption{Band surgery.}
\label{fig:bandsurg}
\end{figure}

An $H(2)$-move is an \emph{unknotting} operation, that is, any knot may be transformed into the trivial knot by a finite sequence of $H(2)$-moves.
Consider two knotoids that differ by a forbidden move, as on the top of Figure \ref{fig:bsesempio}: it is easy to see that their lifts are related by a single $H(2)$-move (see the bottom part of Figure \ref{fig:bsesempio}).

\begin{figure}[h]
\includegraphics[width=5cm]{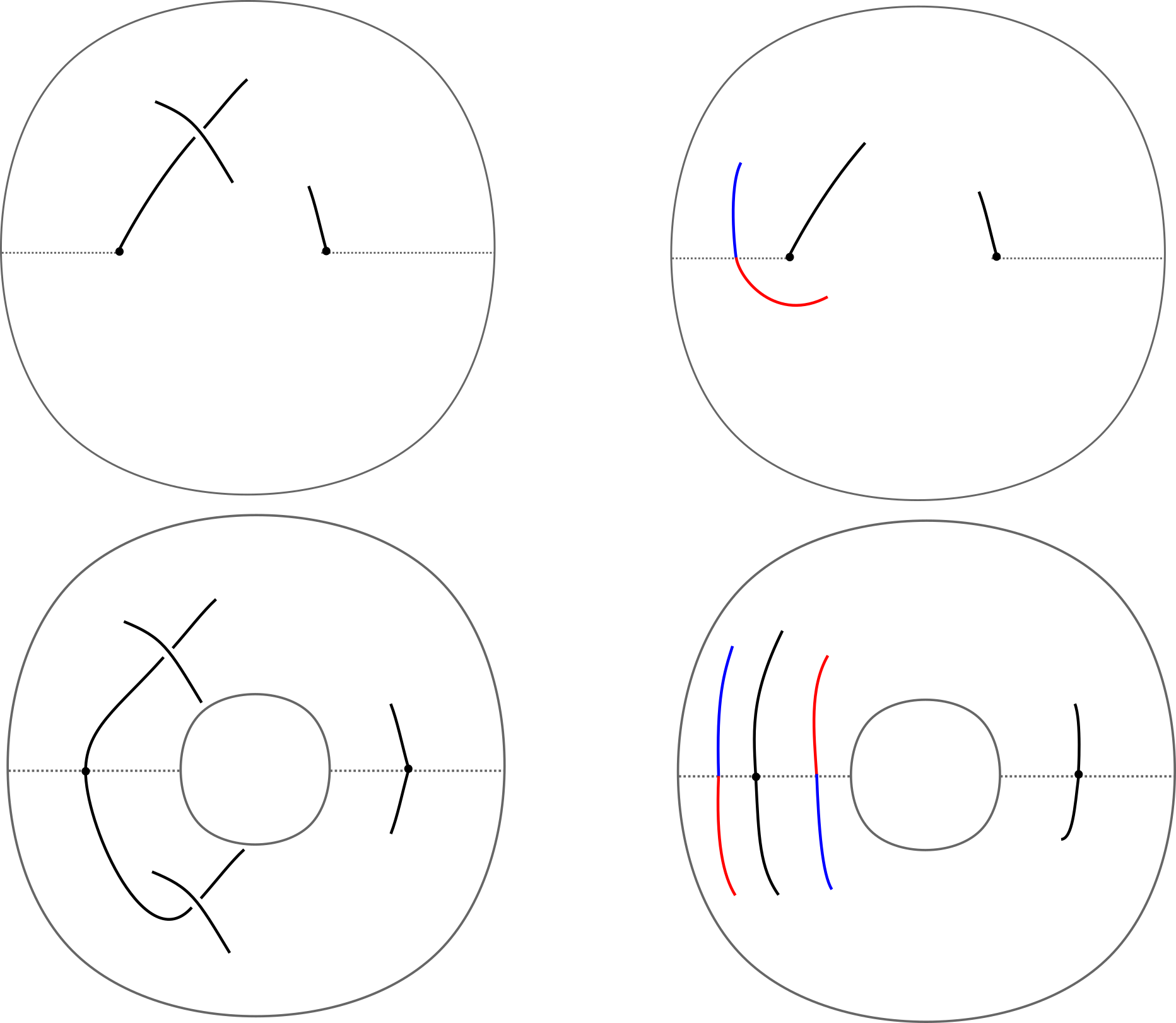}
\caption{Two knotoids that differ by a forbidden move have lifts related by a single band surgery.}
\label{fig:bsesempio}
\end{figure}

\section{Multiplication and trivial knotoid detection}\label{sec:dbctheta}

In this section we will first discuss the behaviour of $\gamma_S$ under multiplication of knotoids. We will then prove two different results on the detection of the trivial knotoid.  

\subsection{Behaviour under multiplication}\label{sec:dbctheta2}

Double branched covers of simple $\theta$-curves have been extensively studied in \cite{theta}. 

\begin{defi}\label{def:thetaprime}
A $\theta$-curve is said to be \emph{prime} if:
\begin{itemize}
 \item it is non-trivial;
 \item it is not the connected sum of a non trivial knot and a $\theta$-curve (see the top part of Figure \ref{fig:thetamulti});
 \item it is not the result of a vertex-multiplication (for a definition of the vertex-multiplication operation see \emph{e.g.} \cite{turaev}, Section $5$) of two non-trivial $\theta$-curves (see the bottom part of Figure \ref{fig:thetamulti}). 
\end{itemize}

\end{defi}

\begin{figure}[h]
\includegraphics[width=9cm]{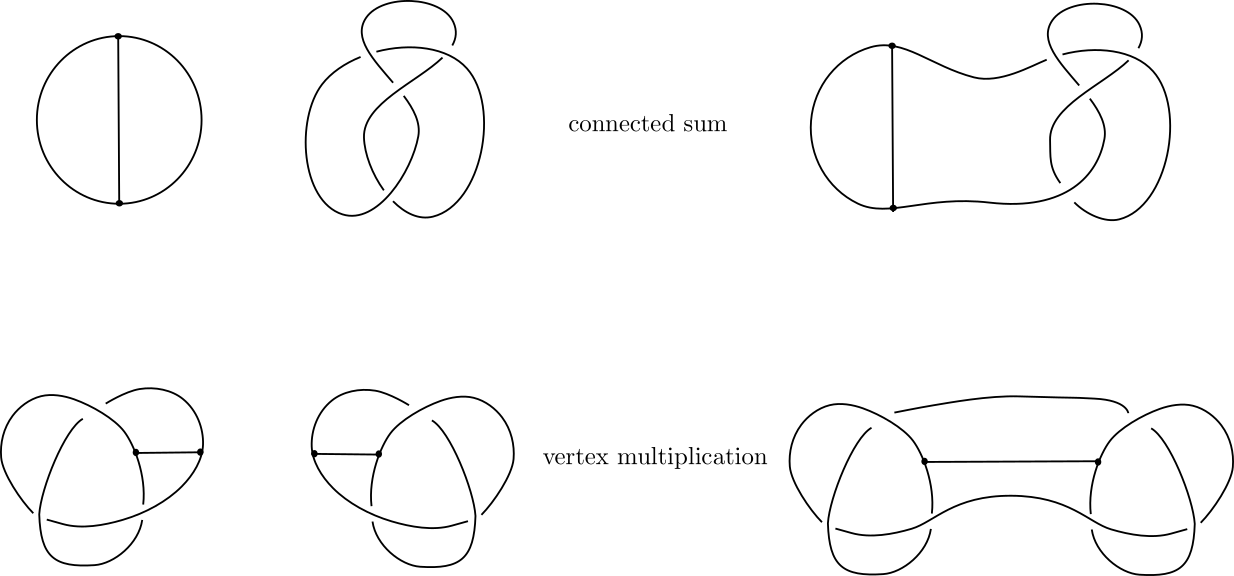}
\caption{On the top, the result of a connected sum between a knot and a $\theta$-curve. On the bottom, the result of a vertex-multiplication of two $\theta$-curves.}
\label{fig:thetamulti}
\end{figure}

According to Definition \ref{def:thetaprime}, if $K$ is a knot-type knotoid, then $t(K)$ is the vertex multiplication of a non trivial knot and a $\theta$-curve, thus, it is not prime. The following result is attributed to Thurston by Moriuchi (\cite{moriuchi}, Theorem $4.1$), and it has been proven in \cite{theta}.

\begin{thm}[Main Theorem in \cite{theta}]\label{thm:main}

Consider a simple $\theta$-curve $a$, with unknotted constituent knot $a_1$, and let $K$ be the closure of the pre-image of $a\setminus a_1$ under the double cover of $S^3$ branched along $a_1$. Then $K$ is prime if and only if $a$ is prime. 

\end{thm}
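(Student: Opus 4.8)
The plan is to reformulate both notions of primeness in terms of essential \(2\)-spheres and then transport such spheres across the double branched cover \(p\colon S^{3}\to S^{3}\) branched over the unknotted constituent knot \(a_{1}=e_{-}\cup e_{+}\). Write \(\tau\) for the deck involution: since \(a_{1}\) is unknotted, \(\tau\) is an orientation-preserving involution of \(S^{3}\) with \(\operatorname{Fix}(\tau)=p^{-1}(a_{1})=:U\) an unknot; the lift \(K=p^{-1}(\overline{a\setminus a_{1}})\) is a knot meeting \(U\) transversally in exactly two points (the lifts of the endpoints); \(\tau\) reverses \(K\); and \((S^{3},K,U)/\tau\) is \((S^{3},\overline{a\setminus a_{1}},a_{1})\), so that \(a=a_{1}\cup(K/\tau)\). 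Recall that a knot is composite precisely when \(S^{3}\) contains an essential \(2\)-sphere meeting it transversally in two points; and that, because \(a\) is \emph{simple}, \(a\) fails to be prime (Definition \ref{def:thetaprime}) precisely when either \(a\) is trivial, or \(S^{3}\) contains an essential \(2\)-sphere meeting \(a\) in two points of the edge \(e_{0}\) with both complementary arcs knotted (a connected sum along \(e_{0}\) with a nontrivial knot — the edges \(e_{\pm}\) cannot carry the summand without knotting \(a_{1}\)), or an essential \(2\)-sphere meeting \(a\) in three points, necessarily one on each edge, with both complementary pieces nontrivial \(\theta\)-curves (a vertex-multiplication). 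I will also use as a separate lemma the base case \emph{\(K\) is the unknot if and only if \(a\) is the trivial \(\theta\)-curve}, which I would prove by an equivariant Dehn's lemma argument: an equivariant spanning disk for an unknotted \(K\) descends to an embedded disk bounded by a constituent knot of \(a\) and with interior disjoint from the third edge, which can be slid off to show that \(a\) is trivial.

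\smallskip
\emph{If \(a\) is not prime, then \(K\) is not prime.} If \(a\) is trivial, \(K\) is the unknot. If \(a\) is a connected sum along \(e_{0}\) with a nontrivial knot, the corresponding \(2\)-sphere \(S_{0}\) meets \(e_{0}\) twice and is disjoint from the branch locus \(a_{1}\), hence lifts to two disjoint \(2\)-spheres interchanged by \(\tau\), each meeting \(K\) in two points; the ball cut off by the knotted summand lifts to two disjoint copies of itself, so \(K\) is composite. If \(a\) is a vertex-multiplication, the corresponding \(2\)-sphere \(S_{0}\) meets \(a_{1}\) in two points (one on \(e_{-}\), one on \(e_{+}\)) and \(e_{0}\) in one point, so its preimage is a single \(2\)-sphere \(\widetilde{S_{0}}\) double-branch-covering \(S_{0}\) over those two points and meeting \(K\) in the two lifts of the \(e_{0}\)-point; each side of \(S_{0}\) carries a nontrivial simple \(\theta\)-piece in a ball, whose lift is a knotted arc by the base case applied to its completion, so again \(K\) is composite.

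\smallskip
\emph{If \(K\) is not prime, then \(a\) is not prime.} If \(K\) is the unknot, the base case gives that \(a\) is trivial. Otherwise \(K\) is composite, and by equivariant surface theory (equivariant versions of the sphere and annulus theorems; see Meeks--Yau) there is an essential \(2\)-sphere \(S\) meeting \(K\) in two points with either \(\tau(S)\cap S=\varnothing\) or \(\tau(S)=S\). In the disjoint case \(S\) is automatically disjoint from \(U\) (a point of \(S\cap U\) would be a fixed point lying in \(S\cap\tau(S)\)), and \((S\sqcup\tau(S))/\tau\) is a single essential \(2\)-sphere downstairs, disjoint from \(a_{1}\), meeting \(e_{0}\) in two points and cutting off a knotted arc since \(S\) is essential: thus \(a\) is a connected sum along \(e_{0}\) with a nontrivial knot. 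In the invariant case \(\tau|_{S}\) is not free (there is no \(\mathbb{RP}^{2}\) in \(S^{3}\)), so \(S\) meets \(U\), and \(\tau|_{S}\) is a \(\pi\)-rotation (\(S\cap U\) two points) or a reflection (\(U\subset S\)). If \(\tau\) interchanges the two points of \(K\cap S\), then \(S/\tau\) is an essential \(2\)-sphere meeting \(a\) in one point of \(e_{0}\) and two points of \(a_{1}\), one on each of \(e_{-},e_{+}\) by a parity count — a vertex-multiplication, with both complementary \(\theta\)-pieces nontrivial because the two sides of \(S\) are knotted (base case in contrapositive form). If \(\tau\) fixes both points of \(K\cap S\), these lie on \(U\); after an equivariant isotopy pushing \(S\) off \(U\) there one reduces to the previous case, and in the residual degenerate configuration (a disk bounded by \(a_{1}\) with interior missing \(e_{0}\), or a sphere through the two vertices) a direct analysis again exhibits \(a\) as a connected sum along an edge with the knot \(\widehat{e_{0}}\), which is nontrivial since \(K\) is not the unknot. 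In all cases \(a\) is not prime.

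\smallskip
The main obstacle is the equivariance: arranging the decomposing surface to respect \(\tau\) and to meet the branch axis \(U\) in a controlled position, which is where the equivariant sphere/Dehn's-lemma machinery enters and where the bookkeeping for the degenerate configurations (the decomposing sphere running through the two fixed points of \(\tau\) on \(K\)) must be done carefully. A secondary point requiring care is essentiality: one must check that the spheres produced in either direction are genuinely essential — that the complementary pieces really carry knotting — and this is exactly where the base-case equivalence ``\(K\) unknot \(\iff\) \(a\) trivial'' is used, both for \(a\) itself and for the \(\theta\)-pieces arising in the vertex-multiplication case. Granting equivariant position and the base case, identifying the complementary pieces (balls with trivial or knotted arcs) and reading off the corresponding \(\theta\)-curve decomposition is routine.
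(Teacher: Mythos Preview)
The paper does not give its own proof of this statement: it is quoted as the Main Theorem of \cite{theta} (Calcut--Metcalf-Burton), with the comment that it was attributed to Thurston by Moriuchi. So there is nothing in the present paper to compare your argument against.

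Your outline is nonetheless the natural one, and is in the spirit of the proof in \cite{theta}: translate primeness on both sides into the existence of essential \(2\)-spheres meeting the graph/knot in a prescribed pattern, and carry such spheres across the branched cover using equivariant surface theory. The direction ``\(a\) not prime \(\Rightarrow\) \(K\) not prime'' is handled correctly. For the converse, the substantive step is exactly the one you flag: producing a \(\tau\)-equivariant decomposing sphere for \(K\). Two points deserve more care than you give them. First, the equivariant sphere theorem as usually stated concerns spheres in a \(3\)-manifold, not spheres meeting a knot in two points; the clean way to proceed is to recast the decomposing sphere as an essential meridional annulus in \(S^{3}\setminus N(K)\) and invoke equivariant JSJ/characteristic-submanifold theory (or work in the orbifold \((S^{3},K)\)), rather than appealing to Meeks--Yau directly. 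Second, your residual cases are too brief: if \(\tau|_{S}\) is a reflection then \(U\subset S\), \(\tau\) swaps the two complementary balls, and \(p(S)\) is a \emph{disk} bounded by \(a_{1}\), not a sphere, so the downstairs picture is not a connected-sum sphere and one must argue differently (indeed this configuration forces \(K\cap S\) to be the two vertices, and one extracts the decomposition from the disk rather than from a sphere); and ``equivariantly pushing \(S\) off \(U\)'' when \(\tau\) fixes both points of \(K\cap S\) is not obviously possible without changing \(|K\cap S|\). These are precisely the places where \cite{theta} does the real work, and where your sketch would need to be filled in.
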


Theorem \ref{thm:main} together with Theorem \ref{thm:semigroupsiso} directly imply the following result on knotoids.

\begin{thm}\label{thm:multandcon}
The lift $\gamma_S(k)$ of a proper knotoid $k$ is prime if and only if $k$ is prime. In particular $\gamma_S(k_1 \cdot k_2) = \gamma_S(k_1) \# \gamma_S(k_2)$.

\end{thm}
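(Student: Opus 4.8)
The plan is to deduce Theorem \ref{thm:multandcon} directly from the machinery already assembled in this section, using $t$ as a dictionary between knotoids and $\theta$-curves and then invoking the Main Theorem of \cite{theta}. First I would recall that, by Theorem \ref{thm:semigroupsiso}, the map $t \colon \mathbb{K}(S^2) \longrightarrow \Theta^s$ is a semigroup isomorphism, so $k$ is prime as a knotoid if and only if $t(k)$ is prime as a $\theta$-curve in the sense of Definition \ref{def:thetaprime}: indeed primeness of $k$ means $k$ is non-trivial and indecomposable under the knotoid multiplication, primeness of $t(k)$ means $t(k)$ is non-trivial and indecomposable under vertex-multiplication, and these two notions correspond under $t$ — the one subtlety being the connected-sum-with-a-knot clause in Definition \ref{def:thetaprime}, which corresponds precisely to multiplication by a non-trivial knot-type knotoid, itself a knotoid-multiplicative factorisation. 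Since we are assuming $k$ is a proper knotoid, no knot-type factor can intervene trivially, so the equivalence "$k$ prime $\iff$ $t(k)$ prime" holds cleanly.

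Next I would unwind the definitions to see that $\gamma_S(k)$ is exactly the knot $K$ appearing in Theorem \ref{thm:main} when we take $a = t(k)$ and $a_1 = e_- \cup e_+$ the preferred constituent unknot. By the construction in Section \ref{miserve}, $\gamma_S(k)$ is the pre-image of the edge $e_0 = k$ under the double cover of $S^3$ branched along the preferred constituent unknot of $t(k)$, and since $a \setminus a_1 = e_0$ up to its endpoints, the closure of $p^{-1}(a \setminus a_1)$ is precisely $\gamma_S(k)$. Moreover $t(k)$ is simple by construction (its constituent knot $e_- \cup e_+$ is unknotted — this is built into the map $t$). Hence Theorem \ref{thm:main} applies verbatim: $\gamma_S(k)$ is prime if and only if $t(k)$ is prime, which by the previous paragraph happens if and only if $k$ is prime. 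This establishes the first sentence of the theorem.

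For the second sentence, $\gamma_S(k_1 \cdot k_2) = \gamma_S(k_1) \# \gamma_S(k_2)$, I would argue that $t$ sends the knotoid multiplication $k_1 \cdot k_2$ to the vertex-multiplication $t(k_1) \cdot t(k_2)$ (this is part of Theorem \ref{thm:semigroupsiso} being a semigroup isomorphism), and then that the double branched cover along the preferred constituent unknot is additive: the vertex-multiplication of two $\theta$-curves is performed in a ball meeting each $\theta$-curve in a single unknotted arc of the $e_\pm$-edges, so the branching sets glue along an unknotted arc and the double branched cover decomposes as a connected sum along the lifted branching sphere. One can either cite the behaviour of branched covers under such "splittings" (as developed in \cite{theta}, which is what underlies the Main Theorem there) or give the short direct argument: place $t(k_1)$ and $t(k_2)$ in standard position, perform the vertex-multiplication in a neighbourhood of a common sub-arc of the preferred unknots, and note the decomposing $2$-sphere in $S^3$ meeting the branch locus in two points lifts to a decomposing $2$-sphere upstairs realising $\gamma_S(k_1) \# \gamma_S(k_2)$.

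The main obstacle is making the identification "knotoid-primeness $=$ $\theta$-curve-primeness" fully rigorous, because Definition \ref{def:thetaprime} of a prime $\theta$-curve has the extra clause about connected sums with knots that has no literal analogue in the definition of a prime knotoid; the resolution is Theorem \ref{unicadecomp} and the identity $k \cdot k' = k' \cdot k$ for knot-type $k'$, together with the footnote observation that a product is knot-type iff both factors are, which together show that for a \emph{proper} knotoid the knotoid factorisations are exactly the $\theta$-curve decompositions allowed in Definition \ref{def:thetaprime}. A secondary, more routine, point is checking that $t$ really does intertwine knotoid multiplication with vertex-multiplication of $\theta$-curves — but this is precisely the content of Theorem \ref{thm:semigroupsiso} as stated, so I would simply quote it.
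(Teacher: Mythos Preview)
Your proposal is correct and follows essentially the same route as the paper: the paper's proof consists of the single sentence that Theorem \ref{thm:main} together with Theorem \ref{thm:semigroupsiso} directly imply the result, and your argument is precisely an unpacking of that implication. Your additional care with the connected-sum-with-a-knot clause in Definition \ref{def:thetaprime} and the explicit verification of the connected-sum formula for $\gamma_S$ go beyond what the paper spells out, but they are the expected details rather than a different approach.
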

Note that even if the products $k_1 \cdot k_2$ and $k_2 \cdot k_1$ are in general distinct both as oriented and unoriented knotoids (see the relations described in Section \ref{miserve}), their lift are equivalent as knots in $S^3$. This seems to imply that $\gamma_S$ cannnot tell apart $k_1 \cdot k_2$ and $k_2 \cdot k_1$. Indeed, to distinguish them it is necessary to use the information on the involution defined by the double branched cover construction, as we will see in Section \ref{sec:connectedsums}.

Consider the mutant knotoids $k_1$ and $k_2$ of Figure \ref{fig:mutanti2}; Proposition \ref{liftknottype} and Theorem \ref{thm:multandcon} imply that: $$\gamma_S(k_1) = KT \hspace{2mm} \# \hspace{2mm} KT \hspace{2mm} \# \hspace{2mm} \gamma_S(k)$$ $$\gamma_S(k_2) = C  \hspace{2mm} \# \hspace{2mm} C \hspace{2mm} \# \hspace{2mm} \gamma_S(k)$$ Since $\gamma_S(k)$ is isotopic to the trefoil knot $3_1$ (see \emph{e.g.} Figure \ref{fig:esempiogc}), and since the genus of a knot is additive under connected sum, it follows that: $$g(\gamma_S(k_1)) = 2+2+1 =5 \hspace{5mm} g(\gamma_S(k_2)) = 3+3+1 =7$$ Thus, $\gamma_S(k_1)$ and $\gamma_S(k_2)$ are different knots. Moreover, by letting $k$ vary in the set of proper knotoids, we obtain an infinite family of pairs of knotoids sharing the same polynomial invariants whose images under $\gamma_S$ are different. 

\subsection{Trivial knotoid detection}\label{sec:trivialdetection}

The double branched cover of knotoids provides a way to detect the trivial knotoid, thanks to the following result.

\begin{thm}[Lemma $2.3$ in \cite{theta}]\label{thm:banalegen}
A knotoid $k \in \mathbb{K}(S^2)$  lifts to the trivial knot in $S^3$ if and only if $k$ is the trivial knotoid $k_0$ in $\mathbb{K}(S^2)$.
 
\end{thm}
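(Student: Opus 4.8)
The plan is to reduce the statement to a known fact about simple $\theta$-curves and invoke Theorem \ref{thm:semigroupsiso}. Recall from Section \ref{miserve} that $\gamma_S(k)$ is, by construction, the closure of the pre-image of $e_0$ under the double cover of $S^3$ branched along the preferred constituent unknot $e_- \cup e_+$ of $t(k)$; that is, $\gamma_S(k)$ is exactly the knot $K$ associated to the simple $\theta$-curve $t(k)$ in the sense of Theorem \ref{thm:main}. So the problem is to show: a simple $\theta$-curve $a$ has trivial associated double branched cover knot $K$ if and only if $a$ is the trivial $\theta$-curve. One direction is immediate — if $a$ is the trivial (planar) $\theta$-curve, then $t^{-1}(a)$ is the trivial knotoid $k_0$ and its lift is visibly the unknot (take the crossingless diagram and lift it via Remark \ref{rmk:diagrammarivsf}). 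Conversely, one must rule out every nontrivial simple $\theta$-curve having an unknotted lift; this is precisely the content of Lemma $2.3$ of \cite{theta}, which we are entitled to cite.

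First I would make the identification of $\gamma_S(k)$ with the ``$K$'' of \cite{theta} completely explicit: $t$ sends $k$ to a simple labelled $\theta$-curve whose constituent knot $e_- \cup e_+$ is unknotted and whose remaining edge $e_0$ is (a copy of) $k$, and the double branched cover in the definition of $\gamma_S$ is exactly the double cover of $S^3$ branched along that unknotted constituent knot. Hence $\gamma_S(k)$ is the closure of the pre-image of $a \setminus a_1$ with $a = t(k)$, $a_1 = e_- \cup e_+$.

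Next, I would translate the triviality hypotheses across the bijection. If $\gamma_S(k)$ is the unknot, then by Lemma $2.3$ of \cite{theta} the $\theta$-curve $t(k)$ is trivial. Since $t : \mathbb{K}(S^2) \to \Theta^s$ is a bijection (indeed a semigroup isomorphism, Theorem \ref{thm:semigroupsiso}) and $t(k_0)$ is the trivial $\theta$-curve, this forces $k = k_0$. The reverse implication follows from the explicit lift of the crossingless diagram.

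The only real content, and hence the ``main obstacle'', is Lemma $2.3$ of \cite{theta} itself — the fact that no nontrivial simple $\theta$-curve lifts to the unknot — but since that is an already-published result which the excerpt explicitly allows us to use, the remaining work is purely the bookkeeping of matching definitions and invoking the bijection $t$. I would therefore present the argument as: (i) recall the definition of $\gamma_S$ in terms of $t$; (ii) cite Lemma $2.3$ of \cite{theta}; (iii) conclude via the injectivity of $t$ and the identification $t(k_0) = $ trivial $\theta$-curve; (iv) note the converse is trivial by lifting the crossingless diagram.
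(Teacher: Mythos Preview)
Your proposal is correct and matches the paper's treatment: the paper does not give an independent proof of Theorem~\ref{thm:banalegen} but simply records it as Lemma~2.3 of \cite{theta}, restated in the language of knotoids via the identification $\gamma_S(k) = K$ for $a = t(k)$. Your write-up makes explicit the bookkeeping (that $\gamma_S$ is defined through $t$, and that injectivity of $t$ from Theorem~\ref{thm:semigroupsiso} transports triviality back to $k = k_0$), which the paper leaves implicit, but the content is identical.
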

Theorem \ref{thm:banalegen} is proven for $\theta$-curves. In the setting of knotoids, a slightly more powerful version of this result holds, allowing for the detection of the trivial planar knotoid $k^{\textrm{pl}}_0 \in \mathbb{K}(\mathbb{R}^2)$ as well.

\begin{thm}\label{thm:banale}
A knotoid $k \in \mathbb{K}(\mathbb{R}^2)$  lifts to a knot isotopic to the core of the solid torus if and only if $k=k^{\textrm{pl}}_0$ in $\mathbb{K}(\mathbb{R}^2)$.
 
\end{thm}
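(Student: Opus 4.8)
The plan is to handle the two implications separately. The forward implication is immediate: if $k=k^{\textrm{pl}}_0$ then, exactly as in the proof of Proposition~\ref{liftknottype}, we may represent $k$ by a crossingless planar diagram with both endpoints in the exterior region of the disk, so that the arcs defining the cuts meet the diagram only at the endpoints; the pre-image $p^{-1}(k)$ is then visibly a single circle winding once around $S^1\times D^2$, i.e.\ the core. All of the content is in the converse, which I would prove in two stages: first reduce to Theorem~\ref{thm:banalegen}, then upgrade using the covering involution.

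For the converse, suppose $\gamma_T(k)$ is isotopic to the core of $V=S^1\times D^2$. The first stage is to pass to the sphere. By Proposition~\ref{prop:s2} we have $\gamma_S(\iota(k))=e(\gamma_T(k))$, and since the embedding $e$ sends a longitude of $V$ to a longitude of the standard unknotted solid torus in $S^3$, it carries the core of $V$ to the unknot. Hence $\gamma_S(\iota(k))$ is the trivial knot, and Theorem~\ref{thm:banalegen} gives $\iota(k)=k_0$ in $\mathbb{K}(S^2)$. This is not yet sufficient, because $\iota$ is far from injective: as Figures~\ref{fig:iotanonsurg} and~\ref{fig:differenza} show, there are planar knotoids with trivial image in $\mathbb{K}(S^2)$ whose lifts to the solid torus are knotted (and have winding number $3,5,\dots$), and these must be excluded.

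The second stage is equivariant. The lift $\gamma_T(k)=p^{-1}(k)$ is invariant under the covering involution $\sigma$ of $p\colon V\to D^2\times I$, whose fixed-point set consists of the two arcs lying over the branch arcs, and it meets this fixed set in exactly two points, one on each arc. Because $\gamma_T(k)$ is isotopic to the core, its exterior in $V$ is a copy of $T^2\times I$, on which $\sigma$ induces an involution with fixed set four arcs, each joining the two boundary tori. I would then argue --- using the equivariant Dehn lemma and equivariant loop theorem, in the spirit of the proof of Lemma~2.3 of \cite{theta}, but keeping the solid torus intact rather than filling it in --- that $\gamma_T(k)$ is $\sigma$-equivariantly isotopic to the standard core $S^1\times\{0\}$; equivalently, that $\sigma$ restricted to the exterior $T^2\times I$ is, together with its gluing data, the standard model, which is pinned down by the four-arc fixed-point pattern. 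Projecting such an equivariant identification down by $p$, the standard core maps to an unknotted arc in $D^2\times I$ with one endpoint on each branch arc, that is, to the trivial planar knotoid; hence $k=k^{\textrm{pl}}_0$.

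The main obstacle is this equivariant recognition of the core: producing a $\sigma$-invariant meridian disk of $V$ meeting $\gamma_T(k)$ in a single point. This is precisely the step that has no analogue in the spherical Theorem~\ref{thm:banalegen}: there one compresses the branched cover across a disk bounded by the constituent unknot, whereas here the meridian of $V$ does not bound in $V$ and the lift has odd winding number, so instead of compressing one must carry the lift onto a non-separating core. I expect the oddness of the winding number of $\gamma_T(k)$ to be exactly what forces the equivariant compressing disk to meet the lift once rather than an even number of times, so that the argument closes; an alternative, more hands-on route would be to classify directly the planar knotoids with trivial image in $\mathbb{K}(S^2)$ and to compute the winding numbers of their lifts.
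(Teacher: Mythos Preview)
Your forward direction and the identification of the exterior as $T^2\times I$ carrying an involution with four fixed arcs are both correct and match the paper. The proposal diverges from the paper at the crucial step, however, and the route you sketch has a genuine gap.

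First, your Stage~1 reduction via $\iota$ and Theorem~\ref{thm:banalegen} is unnecessary; the paper does not use it, and everything happens directly in the solid torus.

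Second, and more importantly, invoking the equivariant Dehn lemma does not produce what you need. The equivariant loop theorem applied to $V$ would yield a $\sigma$-invariant meridian disk $D$, but there is no mechanism forcing $|D\cap\gamma_T(k)|=1$: the winding number only controls the algebraic intersection, and reducing geometric intersections \emph{equivariantly} is precisely the hard part. Your closing remark that oddness ``forces the equivariant compressing disk to meet the lift once'' is a hope, not an argument; without it the proof does not close. Nor does the four-arc fixed-point pattern by itself pin down the gluing data, as you assert --- that is exactly what must be proved.

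The paper takes a quite different route, avoiding equivariant disk arguments entirely. It works on the exterior $T^2\times I$ and uses two classification results: Kim--Tollefson's theorem that any PL involution of $F\times I$ preserving $F\times\partial I$ is conjugate to a product $g\times\lambda$, and the classical fact that $T^2$ has a unique orientation-preserving involution with four isolated fixed points up to conjugacy. Together these show that the quotient $(T^2\times I)/\tau$ is \emph{homeomorphic} to the complement of the trivial planar knotoid in $D^2\times I$. This is still not enough --- a homeomorphism of complements need not carry one knotoid to the other --- so the paper finishes by computing the mapping class group $MCG(D^2\times I;\,p\times I,\,q\times I)\cong\mathbb{Z}$, generated by a single Dehn twist along a separating annulus, and observing that this generator preserves the line-isotopy class of the trivial arc. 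That MCG computation is the substitute for the ``gluing data'' step you allude to but do not supply.
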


\begin{proof}
If $k$ is the trivial knotoid, then its lift is a knot isotopic to the core of the solid torus (see \emph{e.g.} the right side of Figure \ref{fig:differenza}). Conversely, suppose that $\gamma_T(k)$ is isotopic to the core $C$ of the solid torus $S^1 \times D^2$. Then, its complement in the solid torus is homeomorphic to the product $T^2 \times I$. Since $T^2 \times I$ arises as a double branched cover, there is an involution $\tau$ of $T^2 \times I$ with $4$ disjoint arcs as fixed set (see Figure \ref{fig:involutionexterior}).

\begin{figure}[h]
\includegraphics[width=6cm]{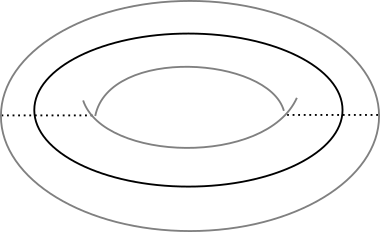}
\caption{$T^2 \times I$ admits an involution with fixed set the union of $4$ arcs. These arcs are the intersection between the lines defining the cover and the complement of a tubular neighbourhood of the lifted knot (the core of the solid torus). }
\label{fig:involutionexterior}
\end{figure}

Thanks to the following result we know that the involution defined by the double branched cover respects the product structure on $T^2 \times I$.

\begin{thm}[Theorem $A$ of \cite{inv2}]
Let $h$ be a PL involution of $F \times I$, where $F$ is a compact surface, such that $h(F \times \partial I)= F \times \partial I$. Then, there exist an involution $g$ of $F$ such that $h$ is equivalent (up to conjugation with homeomorphisms) to the involution of $F \times I$ defined by $(x,t) \mapsto (g(x), \lambda(t))$ for $(x,t) \in F\times I$, and where $\lambda: I \longrightarrow I$ is either the identity or $t \mapsto 1-t$.

\end{thm}

The intersection between the fixed set $Fix(\tau)$ and every torus $T^2 \times \{pt\}$ consists of $4$ isolated points, as highlighted in Figure \ref{fig:involutionexterior}. Involutions of closed surfaces are completely classified; the following result is well known, and it probably should be attributed to \cite{inv3}, but we refer to \cite{inv1} for a more modern and complete survey.

\begin{thm}[Theorem $1.11$ of \cite{inv1}]
There is only one involution $\bar{\tau}$, up to conjugation with homeomorphisms, for the torus $S^1 \times S^1$ with $4$ isolated fixed points. This involution is shown in Figure \ref{fig:onlyinvo}; it is orientation preserving and it is induced by a rotation of $\pi$ about the dotted line indicated in the picture.  
 
\end{thm}

\begin{figure}[h]
\includegraphics[width=6cm]{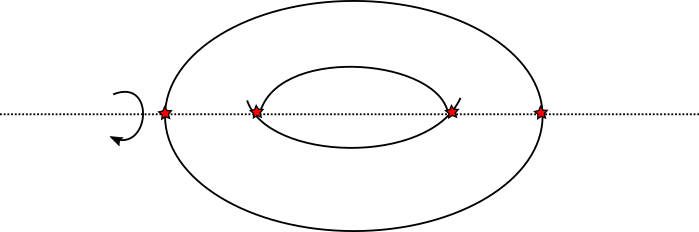}
\caption{The involution of the torus with $4$ fixed points, indicated with the red stars.}
\label{fig:onlyinvo}
\end{figure}

With an abuse of notation, call $\bar{\tau}$ the involution of $T^2 \times I$ obtained as the product $\bar{\tau} \times Id_I$. Since conjugated involutions produce homeomorphic quotient spaces, thanks to the previous two results we can say that the complement of the trivial knot in the solid torus projects to a homeomorphic copy of the complement of the trivial knotoid in the three-ball. In other words, our quotient space $T^2 \times I/ \tau$ is homeomorphic to $T^2 \times I/ \bar{\tau}$, the last one being precisely the  complement of the trivial knotoid, as in Figure \ref{fig:quotientspace}.

\begin{figure}[h]
\includegraphics[width=4cm]{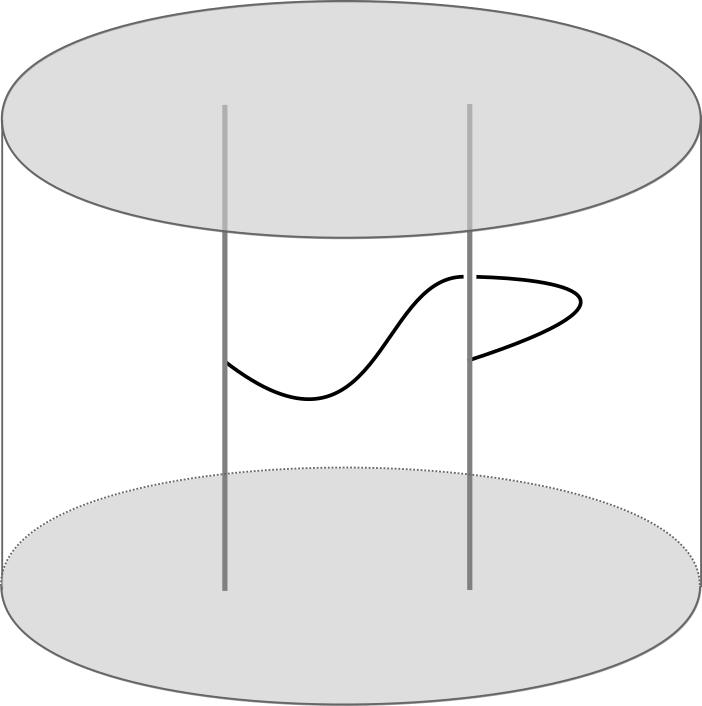}
\caption{The quotient space under the involution is homeomorphic to the complement of the trivial knotoid in the cylinder.}
\label{fig:quotientspace}
\end{figure}

We will be done once we prove that the line isotopy class of the curve in Figure \ref{fig:quotientspace} is not affected by the action of homeomorphisms; this is a consequence of the following proposition. Let $Y$ be the cylinder $D^2 \times I$, and call $MCG(Y; p,q)$ the group of isotopy-classes of automorphisms of $Y$ that leave $p \times I$ and $q \times I$ invariant, where $p,q$ are points in the interior of $D^2 \times \{pt\}$.

\begin{prop}\label{prop:mcg}
$MCG(Y; p,q)$ is isomorphic to $\mathbb{Z}$, and it is generated by a Dehn-twist along the blue rectangle in Figure \ref{fig:quotientspacedehn}.

\end{prop}

\begin{figure}[h]
\includegraphics[width=4cm]{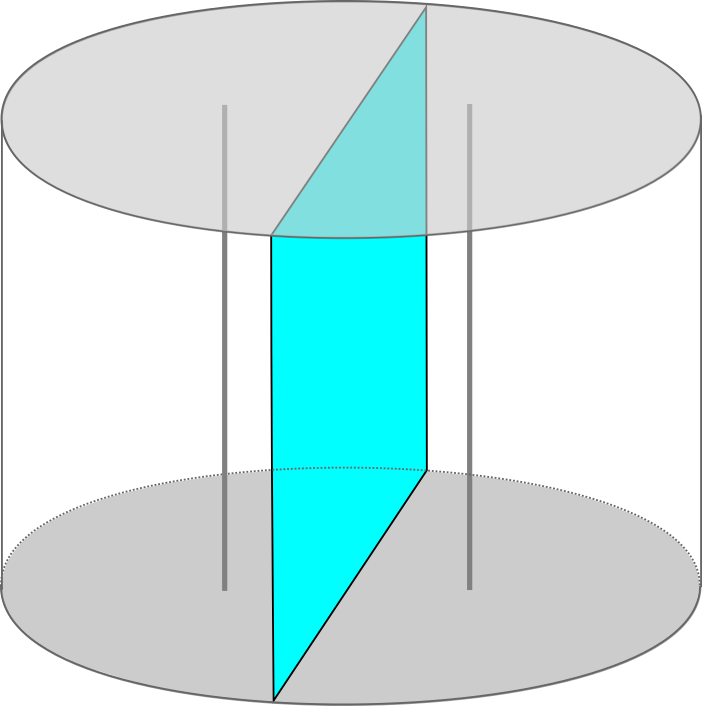}
\caption{$MCG(Y; p,q)$ is generated by a Dehn twist along the boundary of the blue rectangle.}
\label{fig:quotientspacedehn}
\end{figure}

The proof of Proposition \ref{prop:mcg} requires a couple of preliminary results. First, note that removing the two lines yields  a $3$-dimensional genus $2$ handlebody $ H $. The homeomorphisms of a handlebody are determined by their behaviour on the boundary; more precisely, the mapping class group of a handlebody can be identified with the subgroup of the mapping class group of its boundary, consisting of homeomorphisms that can be extended to the handlebody due to the following lemma.

\begin{lemma}

Let $H$ be a genus $2$ handlebody. Any homeomorphism $\phi: H \longrightarrow H$ such that $\restr{\phi}{\partial H}$ is isotopic to $Id_{\partial H}$ is isotopic to $Id_{H}$.
 
\end{lemma}
The previous lemma is well known, and a proof may be found \emph{e.g.} in Chapter $3$ of \cite{fomenko}. 
\begin{rmk}\label{rmk:estendere}
Recall that a self homeomorphism of the boundary of a handlebody can be extended to the handlebody if and only if the image of the boundary of every meridian disc is contractible in the handlebody. In particular, Dehn twists along the blue curves in Figure \ref{fig:cutsurface} do not extend to the handlebody.
 
\end{rmk}

Now, cutting the boundary of the handlebody $H$ along the blue curves, as in Figure \ref{fig:cutsurface}, produces a sphere with $4$ holes $S$.

\begin{figure}[h]
\includegraphics[width=9cm]{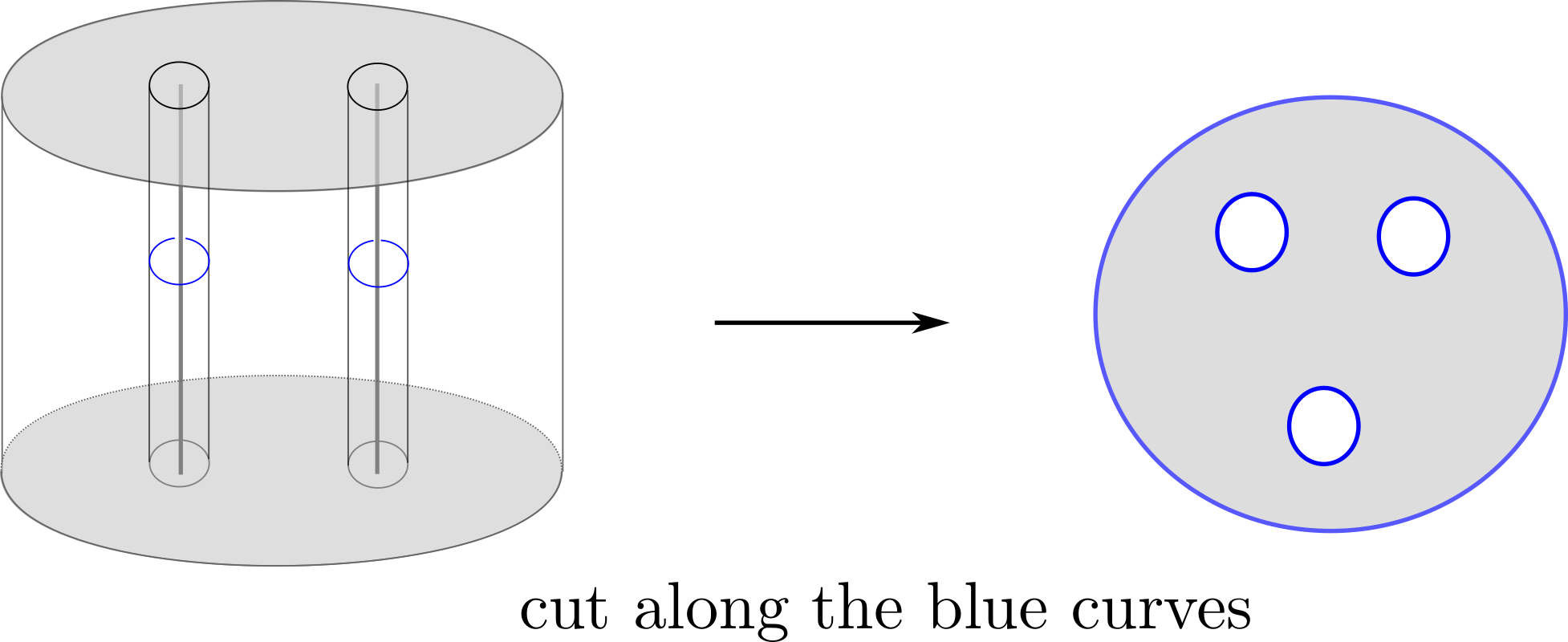}
\caption{Cutting the boundary of the handlebody along the blue curves gives back the sphere with $4$ holes $S$.}
\label{fig:cutsurface}
\end{figure}

A proof for the following lemma can be deduced from \emph{e.g.} the proof of Proposition $2.7$, Chapter $2$, \cite{farb}. Given a surface $S$ with boundary, denote by $MCG(S, \partial S)$ the group of isotopy classes of orientation-preserving homeomorphisms of $S$ that leave each boundary component invariant.

\begin{lemma}\label{lemma:pshere}
Let $S$ be the sphere with $4$ holes. Then, $MCG(S, \partial S)$ is isomorphic to a subgroup of $MCG(T^2) /-Id  \cong PSL(2, \mathbb{Z})$.
 
\end{lemma}

\begin{figure}[h]
\includegraphics[width=3.5cm]{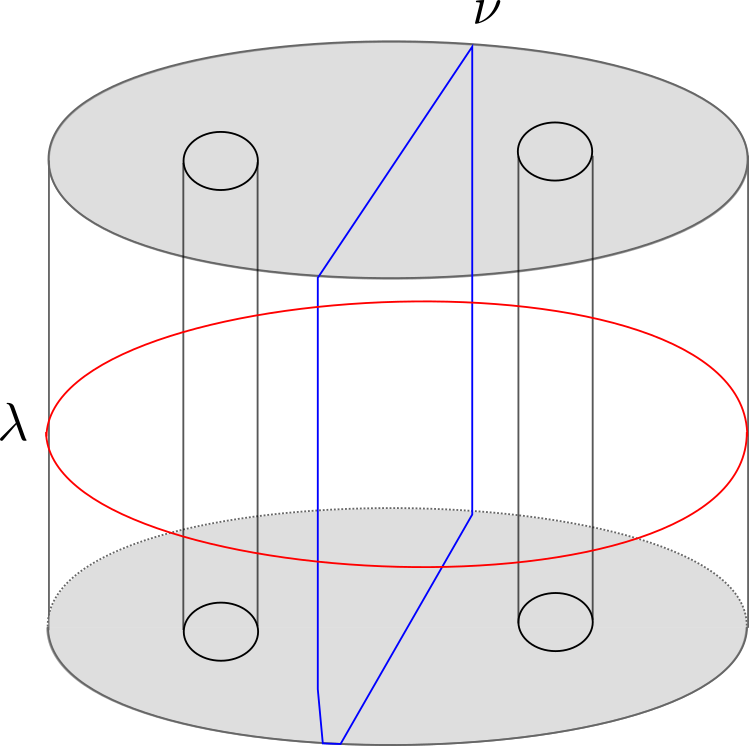}
\caption{$\phi: S \longrightarrow S$ in $MCG(S, \partial S)$ is completely determined by the images of the curves $\nu$ and $\lambda$.}
\label{fig:mcgfinale}
\end{figure}

Thus, we defined a homomorphism $MCG(Y; p,q) \longrightarrow MCG(S, \partial S)$, and an injective homomorphism $ MCG(S, \partial S) \longrightarrow PSL(2, \mathbb{Z})$. An element in the kernel of the composition of these two homomorphisms is then an automorphism of $\partial H$ that leaves the two blue curves in the left side of Figure \ref{fig:cutsurface} invariant and that is isotopic to the identity on $S$. Such an element is a product of Dehn twists about the two blue curves of Figure \ref{fig:cutsurface}, but thanks to Remark \ref{rmk:estendere}, the only element in $MCG(Y;p,q)$ of that form is the trivial element. Moreover, Lemma \ref{lemma:pshere} is proven by exhibiting a bijection between homotopy classes of essential closed curves in $T^2$ and in $S$, and this in particular implies that any homeomorphism $\phi: S \longrightarrow S$ leaving each component of $\partial S$ invariant is completely determined by the images of the curves $\nu$ and $\lambda$ in Figure \ref{fig:mcgfinale}. Now, $\phi(\nu) = \nu$, since $\nu$ is the only essential closed curve in $S$ which is trivial in $H_1(H)$; on the other hand, Remark \ref{rmk:estendere} implies that $\phi(\lambda)$ is the curve that results from $\lambda$ by applying a Dehn twist along $\nu$.
Putting all together, we obtain a proof for Proposition \ref{prop:mcg} and Theorem \ref{thm:banale}, as wanted.
 
\end{proof}

\section{Knotoids and strongly invertible knots}\label{sec:strong}

\subsection{Proof of the main theorem}\label{sec:proofcorrespondence}
This section is devoted to the proof of the main result, Theorem \ref{thm:completo}. We should point out that the correspondence between knotoids and strongly invertible knots is partially inspired by the construction in \cite{liam}, Section $2.2$. We begin by giving a precise definition of what a strongly invertible knot is. Recall that $Sym(S^3,K)$ denotes the symmetry group of a knot $K$, that is, the group of diffeomorphisms of the pair $(S^3, K)$ modulo isotopies, and $Sym^+(S^3,K)$ is the subgroup of $Sym(S^3,K)$ of diffeomorphisms preserving the orientation of $S^3$.

\begin{defi}\label{defi:strongknots}
A \emph{strongly invertible knot} is a pair $(K, \tau)$, where $\tau \in Sym(S^3,K)$ is called a \emph{strong inversion}, and it is an orientation preserving involution of $S^3$ that reverses the orientation of $K$, taken up to conjugacy in $Sym^+(S^3,K)$.
Thus, two strongly invertible knots $(K_1, \tau_1)$ and $(K_2, \tau_2)$ are equivalent if there is an orientation preserving homeomorphism $f:S^3 \longrightarrow S^3$ satisfying $f(K_1)=K_2$ and $f \tau_1 f^{-1}=\tau_2$.
 
\end{defi}

Call $\mathcal{K}SI(S^3)$ the set of strongly invertible knots $(K,\tau)$ in $S^3$, up to equivalence, and $\mathcal{K}_{S.I.}(S^3)$ the subset of $\mathcal{K}(S^3)$ consisting of knots that admit a strong inversion. There is then a natural forgetful map $\mathcal{K}SI(S^3) \longrightarrow \mathcal{K}_{S.I.} (S^3)$. 
As we saw in Section \ref{sec:st}, the lift of a knotoid through the double branched cover of $S^3$ is a strongly invertible knot, thus, $\gamma_S(\mathbb{K}(S^2)) \subset \mathcal{K}_{S.I.}(S^3)$. More precisely, the branching set $e_- \cup e_+$ determines an involution $\tau$. Thus, we can promote $\gamma_S$ to a map $\gamma_S: \mathbb{K}(S^2) \longrightarrow \mathcal{K}SI(S^3)$. Further, a knotoid $k$, its reverse $-k$, its rotation $k_{\textbf{rot}}$ and its reverse rotation $-k_{\textbf{rot}}$ map to the same element in $ \mathcal{K}SI(S^3)$ (see Remark \ref{rmk:samelift}, and note that their associated $\theta$-curves share the same preferred constituent unknot $e_- \cup e_+$). Thus, $\gamma_S$ descends to a well defined map on the quotient $$ \gamma_S: \mathbb{K}(S^2)/_\approx \longrightarrow \mathcal{K}SI(S^3) $$ On the other hand, given a strongly invertible knot there are four oriented knotoids associated to it, given by the construction explained below. Consider a strongly invertible knot $(K, \tau) \in \mathcal{K}SI(S^3)$; the fixed point set of $\tau$ is an unknotted circle (thanks to the positive resolution of the Smith conjecture, \cite{wald}). Moreover, $\tau$ defines the projection $$ p: S^3 \longrightarrow S^3 / \tau \cong S^3 $$ that can be interpreted as the double cover of $S^3$ branched along $Fix(\tau)$.

\begin{figure}[h]
\includegraphics[width=5cm]{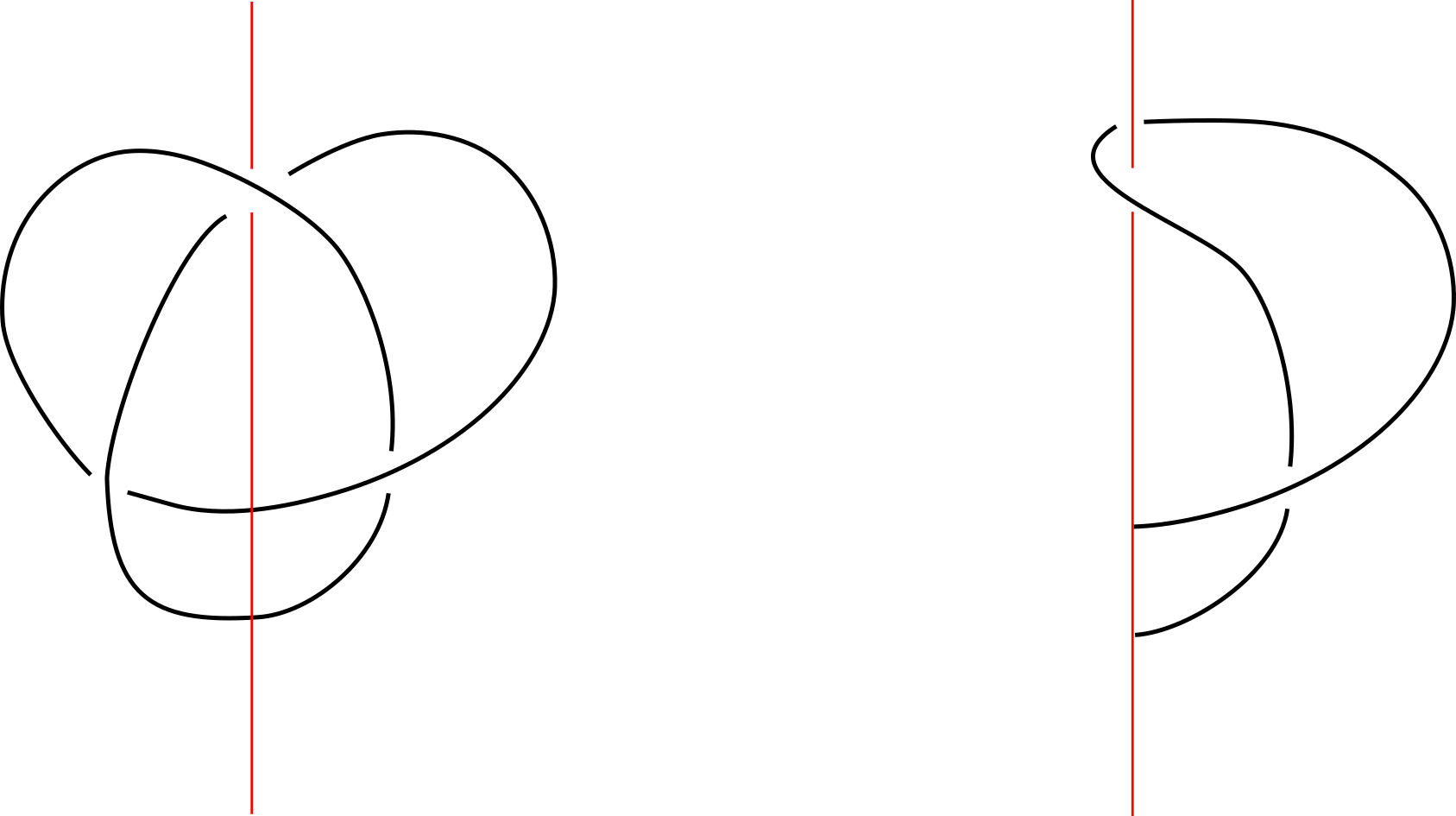}
\caption{The trefoil is a strongly invertible knot. Up to isotopy, we can represent the fixed point set as the $z$ axis in $\mathbb{R}^3$. On the right, the $\theta$-curve obtained from the projection. The unknotted component is again represented as the $z$ axis. }
\label{fig:trefoiltheta}
\end{figure}
From $(K, \tau)$ we can construct the $\theta$-curve $\theta(K, \tau)= p(Fix(\tau)) \cup p(K)$, where $p(K) = e_0$ and $p(Fix(\tau)) = e_- \cup e_+$, as explained in \cite{sakuma} and as shown in Figure \ref{fig:trefoiltheta}. Equivalent strongly invertible knots project to equivalent $\theta$-curves (as elements of $\Theta^s/_\approx$), thus, we have a well defined map $$\beta:\mathcal{K}SI(S^3) \longrightarrow \Theta^s/_\approx $$ The four labelled $\theta$-curves corresponding to the different choices of labelling the edges $e_-$ and $e_+$ and the vertices $v_0$ and $v_1$ are mapped by the isomorphism $t$ of Theorem \ref{thm:semigroupsiso} to knotoids $k$,$-k$, $k_{\textbf{rot}}$ and $-k_{\textbf{rot}}$ related by reversion and rotation, as discussed in Section \ref{miserve2}. Thus, we have a well defined map $$\Pi = t_{\approx}^{-1}  \circ \beta $$ from the set of strongly invertible knots to the set $\mathbb{K}(S^2)/_\approx$ of unoriented knotoids in $S^2$ up to rotation. Since the preferred constituent unknot of $t_{\approx}(t_{\approx}^{-1}(\theta(K,\tau))) = \theta(K,\tau)$ is clearly $p(Fix(\tau))$, $\Pi$ is the inverse of $\gamma_S$. From this and the discussion in Section \ref{sec:st} we obtain that $$ \gamma_S: \mathbb{K}(S^2)/_\approx \longrightarrow \mathcal{K}SI(S^3) $$ is a bijection, and Theorem \ref{thm:completo} is proven.

\subsection{Connected sums}\label{sec:connectedsums}
Call $k_1$ the knotoid on the left-side of Figure \ref{fig:symmetries}, and consider the product $k_1 \cdot k_1$. Its image under $\gamma_S$ is the composite knot $3_1 \# 3_1$ (see \emph{e.g.} Figure \ref{fig:esempiogc}, and recall Theorem \ref{thm:multandcon}). We know from Proposition \ref{liftknottype} that the knot-type knotoid $k$ associated to the trefoil knot lifts to $3_1 \# 3_1$ as well (the trefoil is invertible, thus $3_1 \sim -3_1$). Theorem \ref{thm:completo} implies that $3_1 \# 3_1$ admits at least two non-equivalent involutions, associated to the equivalence classes in $\mathbb{K}(S^2)/_\approx$ of the knotoids $k_1 \cdot k_1$ and $k$, respectively. These non-equivalent involutions are shown in Figure \ref{fig:3_1piu3_1}.

\begin{figure}[h]
\includegraphics[width=4cm]{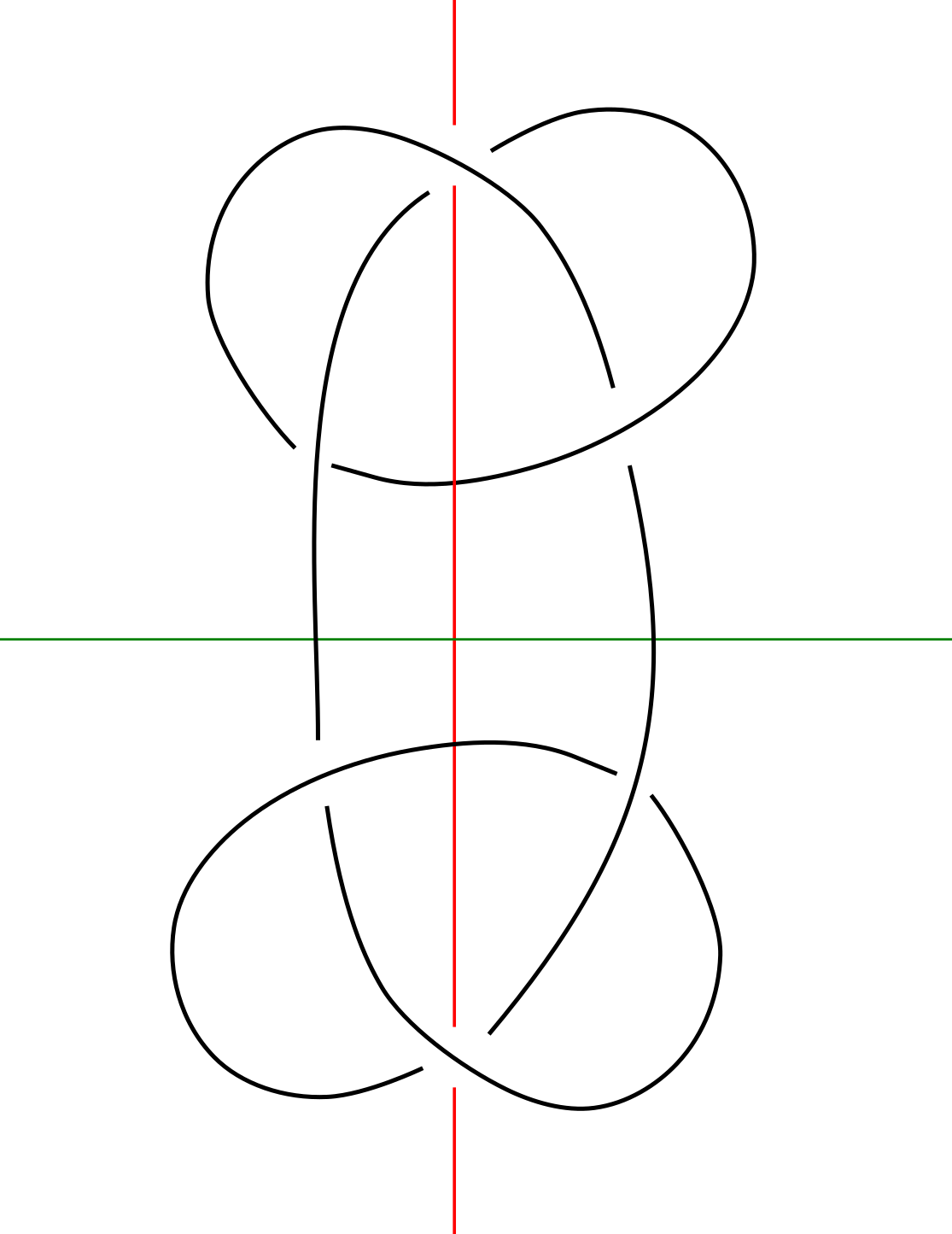}
\caption{The fixed point sets of two non-equivalent involutions are shown here. The one corresponding to the vertical line associates the knot $3_1 \# 3_1$ to $k_1 \cdot k_1$. The quotient under the involution corresponding to the horizontal line is the one associated to the knot-type knotoid $k$.}
\label{fig:3_1piu3_1}
\end{figure}

Similarly, Figure \ref{fig:differentinvo} shows two different involutions of the composite knot $3_1 \# 8_{20}$, defining different composite knotoids.

\begin{figure}[h]
\includegraphics[width=10cm]{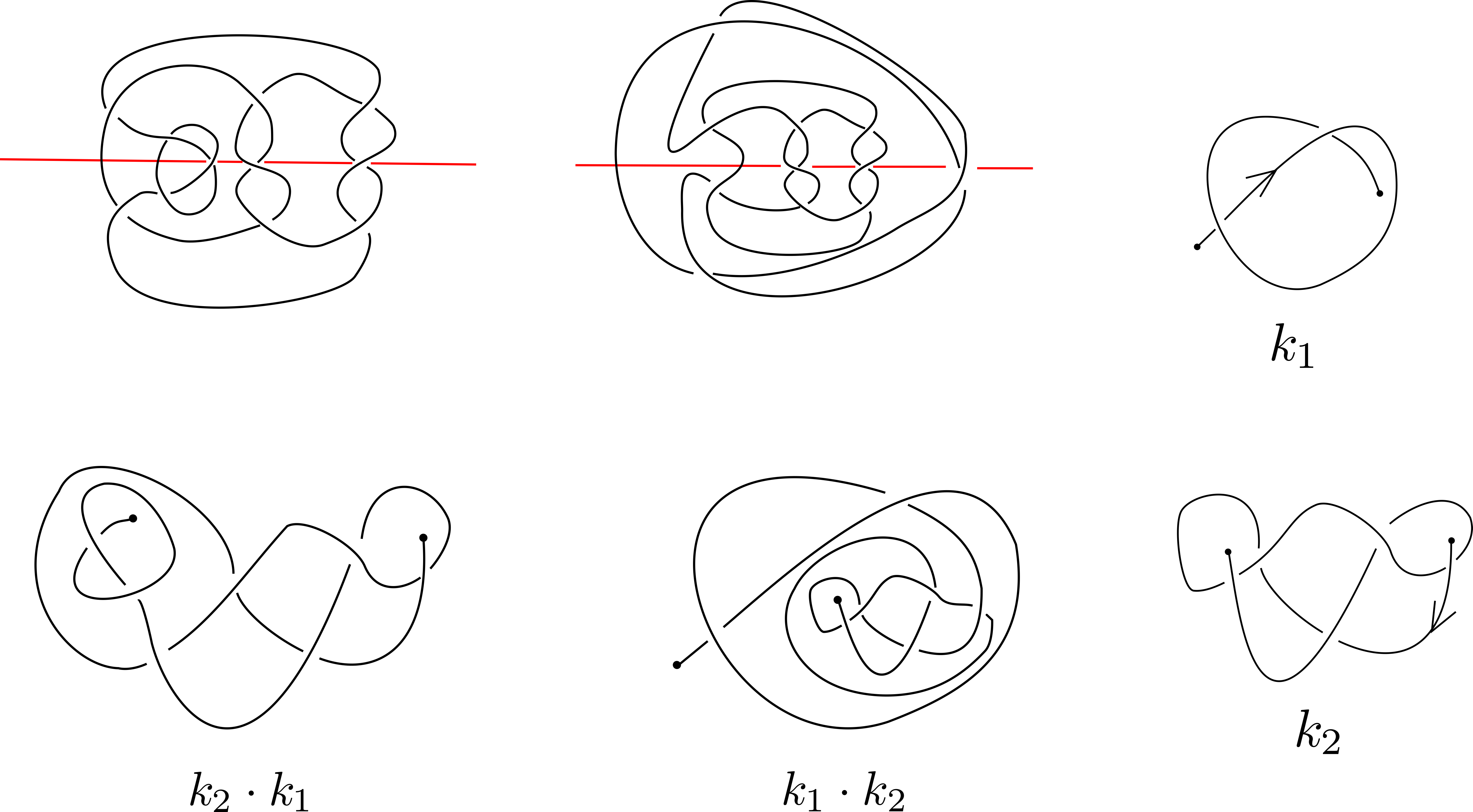}
\caption{Two different involutions of the composite knot $3_1 \# 8_{20}$, associated to the composite knotoids $k_1 \cdot k_2$ and $k_2 \cdot k_1$.}
\label{fig:differentinvo}
\end{figure}

\subsection{Strong inversions}\label{sec:stronginversion}
It is a classical result \cite{finite} that every knot admits a finite number of non equivalent strong inversions. For torus and hyperbolic knots a stronger result holds. Recall that we say that a knot $K$ \emph{admits period $2$} if it is fixed by an orientation preserving involution which also preserves the knot orientation. More precisely, $K$ has \emph{cyclic} (respectively \emph{free}) period $2$ if there exist a non-trivial $\phi \in Sym^+(S^3,K)$ with $\phi^2=id$, that preserves the orientation on $K$, with $fix(\phi)$ an unknot (respectively $fix(\phi) = \varnothing$).

\begin{thm}[Proposition $3.1$, \cite{sakuma}]\label{thm:hyper}
A torus knot admits exactly one strong inversion. If a hyperbolic knot is strongly invertible, then it admits either $1$ or $2$ non equivalent inversions, and it admits exactly $2$ if and only if it admits (cyclic or free) period $2$.
 
\end{thm}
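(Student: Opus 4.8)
The plan is to work entirely with the finite group $G^{+}=Sym^{+}(S^{3},K)$ and reduce the statement to elementary group theory. For a hyperbolic knot $G^{+}\cong \mathrm{Isom}^{+}(S^{3}\setminus K)$ is finite by Mostow rigidity, and I would fix once and for all the homomorphism $\eta\colon G^{+}\to \mathbb{Z}/2$ recording the action on the orientation of $K$. Since $K$ is assumed strongly invertible, $\eta$ is surjective, so its kernel $G_{0}^{+}$ — the symmetries preserving the orientations of both $S^{3}$ and $K$ — has index $2$. The first observation I would record is that every involution $\sigma\in\eta^{-1}(1)$ is automatically a strong inversion: $\sigma$ preserves the orientation of $S^{3}$ (it lies in $G^{+}$), and $\sigma|_{K}$ is an orientation-reversing involution of the circle $K$, hence has exactly two fixed points, so $\mathrm{Fix}(\sigma)$ is nonempty and, by the positive solution of the Smith conjecture, is an unknot meeting $K$ in those two points. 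Consequently the number of strong inversions of $K$ up to conjugacy equals the number of conjugacy classes of involutions in the coset $\eta^{-1}(1)\subset G^{+}$; and a symmetry of $(S^{3},K)$ is a period-$2$ symmetry (cyclic or free according to whether its fixed set is an unknot or is empty) precisely when it is an involution lying in $G_{0}^{+}$.

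Next I would identify $G^{+}$ as a dihedral group. The action of $G^{+}$ on $K\cong S^{1}$ is faithful: an element acting trivially on $K$ preserves both orientations, hence lies in $G_{0}^{+}$, and being a nontrivial finite-order diffeomorphism of $S^{3}$ fixing the nontrivial knot $K$ pointwise it would contradict the Smith conjecture. A finite group of orientation-preserving homeomorphisms of $S^{1}$ is cyclic, so $G_{0}^{+}\cong\mathbb{Z}/n$ and acts on $K$ as the standard group of rotations; a strong inversion $\tau$ acts as an orientation-reversing involution, i.e. a reflection. Rotations by multiples of $2\pi/n$ together with one reflection generate a copy of the dihedral group $D_{n}$ of order $2n$ inside $\mathrm{Homeo}(S^{1})$; since $G^{+}$ acts faithfully with image containing this $D_{n}$, and $|G^{+}|=2|G_{0}^{+}|=2n=|D_{n}|$, the image is exactly $D_{n}$ and $G^{+}\cong D_{n}$.

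The rest is bookkeeping inside $D_{n}$, whose cyclic subgroup is $G_{0}^{+}\cong\mathbb{Z}/n$. The coset $\eta^{-1}(1)$ consists of the $n$ reflections, every one of which is an involution and hence a strong inversion; the reflections form a single conjugacy class when $n$ is odd and exactly two conjugacy classes when $n$ is even. On the other hand $G_{0}^{+}\cong\mathbb{Z}/n$ contains an involution — equivalently, $K$ admits a (cyclic or free) period-$2$ symmetry — precisely when $n$ is even. Therefore $K$ has exactly one strong inversion when $n$ is odd, in which case it admits no period $2$, and exactly two strong inversions when $n$ is even, in which case it admits period $2$. This proves the hyperbolic half of the statement, including the ``exactly $2$ iff period $2$'' clause.

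Finally, for a nontrivial torus knot $T(p,q)$ I would argue separately, since its exterior is Seifert fibred rather than hyperbolic. Here I would invoke Schreier's classical computation that $Sym^{+}(S^{3},T(p,q))\cong\mathbb{Z}/2$, generated by an involution reversing the orientation of the knot and preserving that of $S^{3}$, namely a strong inversion; hence $T(p,q)$ has exactly one. Alternatively one can run the argument above, using the uniqueness of the Seifert fibring of the exterior and the classification of fibre-preserving symmetries over a disc with two cone points of coprime orders, which forces $G_{0}^{+}$ to be trivial. The real content of the proof is the identification $G^{+}\cong D_{n}$ in the hyperbolic case: this is where $3$-manifold topology enters, through Mostow rigidity (finiteness of $G^{+}$) and the Smith conjecture (faithfulness of the action on $K$ and control of the fixed sets of the relevant involutions). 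Once this structural input is secured, everything else is a routine count of involutions and conjugacy classes in a dihedral group.
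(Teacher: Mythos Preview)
The paper does not give its own proof of this statement; it is quoted as Proposition~3.1 of Sakuma \cite{sakuma} and used as a black box. Your argument is correct and is essentially the standard proof (and, as far as one can tell, close to Sakuma's): realise $Sym^{+}(S^{3},K)$ geometrically as a finite group acting on $(S^{3},K)$, use the Smith conjecture to see that the restriction to $K\cong S^{1}$ is faithful, conclude that $Sym^{+}(S^{3},K)\cong D_{n}$ with $G_{0}^{+}\cong\mathbb{Z}/n$, and then count conjugacy classes of reflections. The only point worth making explicit is that you are silently using geometric realisation (via Mostow rigidity and the Orbifold Theorem, as in the paper's Theorem~\ref{Thm:ConjugateToIsometries}) to pass from isotopy classes in $Sym^{+}(S^{3},K)$ to an honest finite group of diffeomorphisms, so that ``fixing $K$ pointwise'' and ``finite order'' make literal sense and the Smith conjecture applies; once this is said, the rest is indeed routine dihedral bookkeeping.
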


The previous result together with Theorem \ref{thm:completo} proves Corollary \ref{cor:completo2}. Thus, to every torus knot there is a single knotoid associated up to reversion and rotation, and to every hyperbolic knot there are at most two. We give the following definition, borrowed from classical knot theory.

\begin{defi}
We will call \emph{torus knotoids} the knotoids whose lifts are torus knots. Similarly, we will call \emph{hyperbolic knotoids} those lifting to hyperbolic knots. 
 
\end{defi}
More generally, only finitely many knotoids are associated with a single knot type. Hence it is natural to ask the following.
\begin{ques}\label{algorithm}
Is there an algorithm to decide whether two knotoids $k_1$ and $k_2$ are equivalent?
\end{ques}

Since there are now several known ways to solve the knot recognition problem (for a survey, see \emph{e.g} \cite{Lackenby} and \cite{dinni}), the next step to answer Question \ref{algorithm} positively would be to decide whether two given involutions of a knot complement are conjugate homeomorphisms. As stated in the introduction, using the solution to the equivalence problem for hyperbolic knots (\cite{manning} and \cite{kupe}), since there is an algorithm to decide whether two involutions of a hyperbolic knot complement are conjugate \cite{Lackenbynew}, this can be done in the hyperbolic case. Thus, it is possible to tell if two hyperbolic knotoids $k_1$ and $k_2$ represent equivalent classes in $  \mathbb{K}(S^2)/_\approx$. This is enough to distinguish them as oriented knotoids.

\begin{thm}\label{thm:algo1}
Given two hyperbolic knotoids $k_1$ and $k_2 $, there is an algorithm to determine whether $k_1$ and $k_2$ are equivalent as oriented knotoids.  
\end{thm}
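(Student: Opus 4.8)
The plan is to assemble three ingredients: the bijection from Theorem~\ref{thm:completo}, the solution of the recognition problem for hyperbolic knots, and an algorithm deciding conjugacy of strong inversions in the symmetry group of a hyperbolic knot. First I would reduce the problem on knotoids to a problem on strongly invertible knots. By Theorem~\ref{thm:completo}, the map $\gamma_S:\mathbb{K}(S^2)/_\approx \longrightarrow \mathcal{K}SI(S^3)$ is a bijection, and it is effectively computable: from a knotoid diagram one constructs the associated $\theta$-curve $t(k)$ (Section~\ref{miserve2}) and then a diagram for the lift $\gamma_S(k)$ together with the branching involution $\tau$, for instance via the explicit double-cover-by-cuts picture of Remark~\ref{rmk:diagrammarivsf} (and made fully explicit by the Gauss-code algorithm of Section~\ref{sec:gauss}). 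So $k_1$ and $k_2$ are equivalent as unoriented knotoids up to rotation if and only if the pairs $(\gamma_S(k_1),\tau_1)$ and $(\gamma_S(k_2),\tau_2)$ are equivalent as strongly invertible knots, i.e.\ conjugate in $Sym^+(S^3,K)$ after an orientation-preserving homeomorphism matching the underlying knots.

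Next I would run the decision procedure on these pairs. The underlying knots $\gamma_S(k_i)$ are hyperbolic by hypothesis (this is precisely what ``hyperbolic knotoid'' means), so by the solution to the equivalence problem for hyperbolic knots (\cite{manning}, \cite{kupe}) one decides whether $\gamma_S(k_1)\cong\gamma_S(k_2)$; if not, the knotoids are inequivalent and we stop. If they are equivalent, fix an orientation-preserving homeomorphism identifying the two knots (and hence their complements), transport $\tau_1$ to an involution of the complement of $\gamma_S(k_2)$, and apply the algorithm of \cite{Lackenbynew} (Theorems~$8.2$ and~$8.3$) which decides whether two involutions of a hyperbolic knot complement are conjugate. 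One must be mildly careful that conjugacy here is taken inside $Sym^+(S^3,K)$, matching Definition~\ref{defi:strongknots}: since a hyperbolic knot complement has finite mapping class group, one can enumerate the finitely many isotopy classes of orientation-preserving self-homeomorphisms of the pair $(S^3,K)$ and check conjugacy against each, so this is only a finite bookkeeping layer on top of the cited algorithm. This decides equivalence in $\mathbb{K}(S^2)/_\approx$.

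Finally I would upgrade from unoriented-up-to-rotation equivalence to equivalence of \emph{oriented} knotoids. Given $k_1$, its full rotation/reversion orbit consists of at most the four knotoids $k_1,-k_1,(k_1)_{\textbf{rot}},-(k_1)_{\textbf{rot}}$, and each of these is obtained from a diagram of $k_1$ by an explicit diagrammatic operation (swap the endpoint labels; reflect the sphere in the line through the endpoints followed by a mirror move; see Section~\ref{sec:symmetries} and Definition~\ref{rotatable}). So one computes diagrams for all four, and $k_1\sim k_2$ as oriented knotoids iff $k_2$ is equivalent \emph{as an oriented knotoid} to one of these four. Since the step above already tells us whether $k_2$ lies in the $\approx$-orbit of $k_1$ at all, it remains only to decide, for two knotoids known to differ by at most rotation/reversion, whether in fact they are oriented-equivalent. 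This again reduces to the conjugacy test: $k$ and $-k$ are oriented-equivalent iff $\gamma_S(k)$ has cyclic period $2$ and the relevant involutions are conjugate, $k$ and $-k_{\textbf{rot}}$ iff it has free period $2$, etc. (this is exactly the content of Theorem~\ref{llll}), and period-$2$-ness together with conjugacy of the corresponding symmetries of a hyperbolic knot complement is decidable by the same tools. Assembling these finitely many decidable checks yields the algorithm.

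The main obstacle is the conjugacy-of-involutions step: one genuinely needs that the symmetry group of a hyperbolic knot complement is not just finite but \emph{effectively} computable together with its action, so that ``are these two involutions conjugate in $Sym^+$'' is an effectively answerable question and not merely a question about abstract finite groups; this is supplied by \cite{Lackenbynew}, and the remaining work is the routine-but-careful reduction of the oriented-knotoid equivalence question to a finite list of such conjugacy queries via Theorem~\ref{thm:completo} and Theorem~\ref{llll}.
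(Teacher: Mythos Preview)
Your reduction to strongly invertible knots and your use of hyperbolic knot recognition together with the conjugacy algorithm of \cite{Lackenbynew} to decide equivalence in $\mathbb{K}(S^2)/_\approx$ is correct and is exactly what the paper does. The gap is in your final ``upgrade'' step. You invoke Theorem~\ref{llll} (and implicitly Theorem~\ref{marcrot}) to determine whether $k_1\sim -k_1$, whether $k_1\sim -(k_1)_{\textbf{rot}}$, and that $k_1\not\sim (k_1)_{\textbf{rot}}$; this tells you the \emph{internal structure} of the $\approx$-orbit of $k_1$, i.e.\ which of the four elements $k_1,-k_1,(k_1)_{\textbf{rot}},-(k_1)_{\textbf{rot}}$ coincide. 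But it does not tell you \emph{which} of these four $k_2$ is. Concretely, if $\gamma_S(k_1)$ has no period~$2$, the orbit consists of four pairwise inequivalent oriented knotoids; your checks then yield no information at all about where $k_2$ sits among them. The period-$2$ tests are symmetric in $k_1$ and $k_2$ (they only see the common lift), so they cannot possibly separate the cases $k_2\sim k_1$ and $k_2\sim (k_1)_{\textbf{rot}}$.

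The paper closes this gap differently: once one knows that $k_1$ is equivalent to \emph{exactly one} member of a finite explicit list (either $\{k_2,(k_2)_{\textbf{rot}}\}$ or $\{k_2,-k_2,(k_2)_{\textbf{rot}},-(k_2)_{\textbf{rot}}\}$, using Corollary~\ref{cor:completo3}), one simply enumerates Reidemeister sequences from $k_1$ in parallel and waits until one of the target diagrams is reached; termination is guaranteed precisely because exactly one match exists. Your argument becomes correct if you insert this parallel-search step (or, alternatively, if you strengthen the conjugacy test to track the labelling of the two fixed arcs $e_\pm$ and the two vertices $v_0,v_1$, which is the data distinguishing oriented knotoids within a $\approx$-class).
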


\begin{proof}
By the previous discussion, we can tell if $k_1$ and $k_2$ represent equivalent classes in $ \in \mathbb{K}(S^2)/_\approx$. Suppose they do, and note that since the mapping class group of a hyperbolic knot is computable (see \cite{henri}), we can tell whether their lift admits or not period $2$. If it does, then Corollary \ref{cor:completo3} (a proof of which is contained in Section \ref{sec:inam}) tells us that exactly one of the following holds:
\begin{itemize}
 \item $k_1$ and $k_2$ are isotopic as oriented knotoids;
 \item $k_1$ is isotopic to $k_{2\textbf{rot}}$.
\end{itemize}
We can then consider the diagrams of $k_1$ and $k_2$ given as an input, and any diagram of $k_{2\textbf{rot}}$. Now, if two diagrams represent equivalent knotoids, then there exists a finite sequence of Reidemeister moves and isotopies taking one to the other. Since we know that $k_1$ is equivalent to one between $k_2$ and $k_{2\textbf{rot}}$, after a finite number of Reidemeister moves and isotopies performed on $k_1$, this will transform into either $k_2$ or $k_{2\textbf{rot}}$. Similarly, if the lift of $k_1$ and $k_2$ does not admit period $2$, Corollary \ref{cor:completo3} assures that exactly one of the following holds: 
\begin{itemize}
 \item $k_1$ and $k_2$ are isotopic;
 \item $k_1$ is isotopic to $-k_{2}$;
 \item $k_1$ is isotopic to $k_{2\textbf{rot}}$;
 \item $k_1$ is isotopic to $-k_{2\textbf{rot}}$.
\end{itemize}
And we can distinguish between these possibilities exactly as before. 
\end{proof}

\begin{rmk}
Note that Question \ref{algorithm} can be answered positively using the correspondence between knotoids and $\theta$-curves (Theorem \ref{thm:semigroupsiso}). Indeed, given two $\theta$-curves, we can consider their complements in $S^3$, together with the data of the meridians of the three edges. We could then let Haken's algorithm (see \cite{haken}, \cite{abbi}) run to decide whether or not the obtained $3$-manifolds are equivalent. However, the algorithm of Theorem \ref{thm:algo1} appears to be practical, whereas Haken's algorithm is not. 

\end{rmk}

\subsection{An example: the $T_{2,2k+1}$-torus knotoids}\label{sec:torusknots}
Every $T_{2,2k+1}$-torus knot admits a diagram of the form shown in the upper part of Figure \ref{fig:torusinvertibile1}, where its unique involution $\tau$ is represented as a straight line.  

\begin{figure}[h]
\includegraphics[width=10cm]{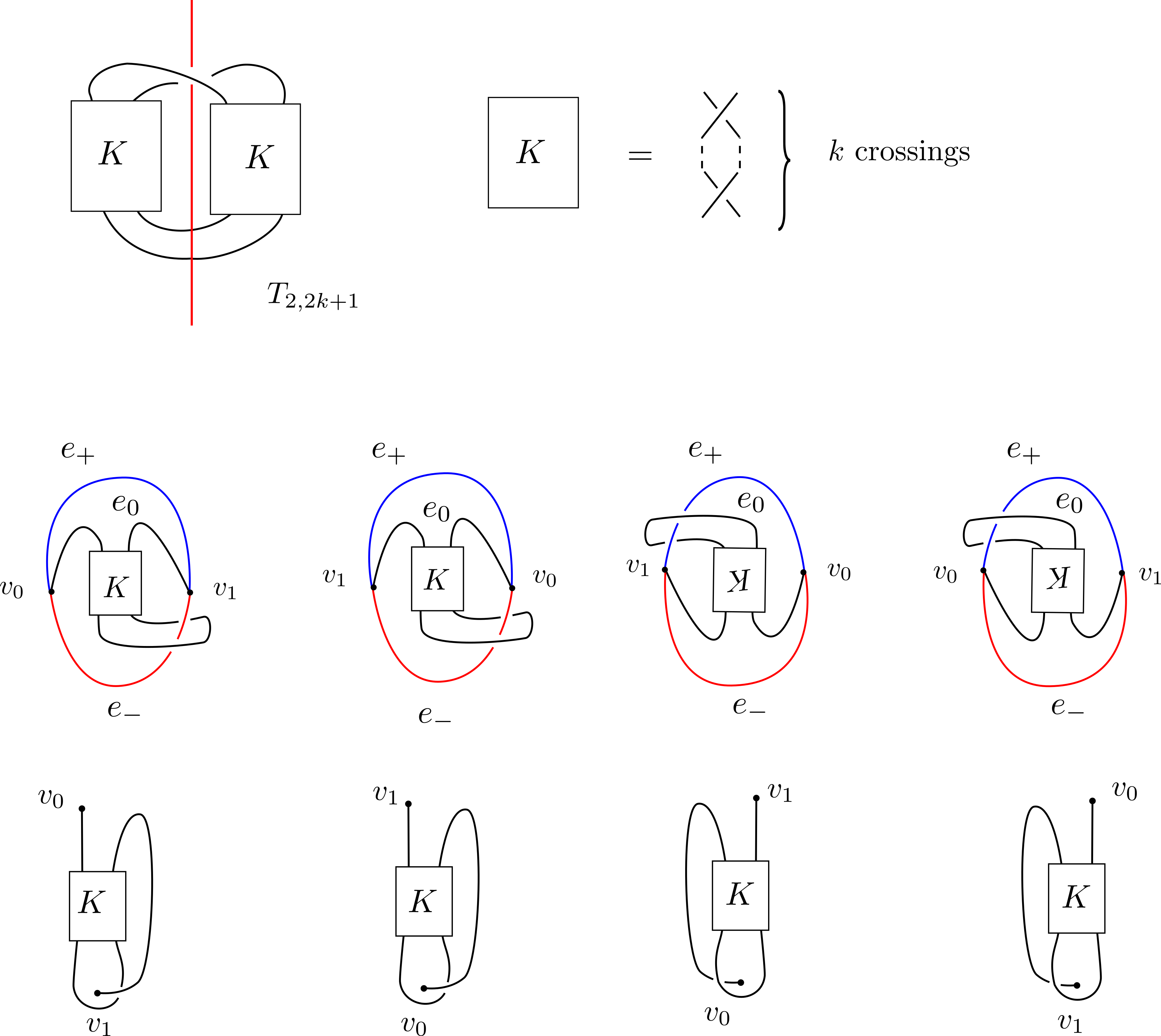}
\caption{On the top, a diagram representing the torus knot $T_{2,2k+1}$. The $K$ box contains $k$ consecutive crossings of the same sign, as shown on the top-right side of the picture. Note that rotating $K$ by $\pi$ does not change it. The unique involution $\tau$ of $T_{2,2k+1}$ gives $4$ labelled $\theta$-curves and their associated oriented knotoids. These are related to one another by reversion and rotation.}
\label{fig:torusinvertibile1}
\end{figure}
Recall that, as defined in Section \ref{sec:proofcorrespondence}, the inverse of $\gamma_S$ is given by $\Pi = t_{\approx}^{-1}  \circ \beta $, where $\beta$ is the map from the set of strongly invertible knots $\mathcal{K}SI(S^3)$ to the set $\Theta^s/_\approx$ of equivalence classes of simple and labeled $\theta$-curves, and $t_{\approx}$ is the bijection $t_{\approx}: \mathbb{K}(S^2)/_\approx \longrightarrow \Theta^s/_\approx$. In the equivalence class of $\beta(T_{2,2k+1})$ there are \emph{a priori} $4$ distinct labelled $\theta$-curves. These are shown in the middle of Figure \ref{fig:torusinvertibile1}, over their associated knotoids. Using these canonical representatives of $\Pi(T_{2,2k+1}, \tau)$ we can prove the following.  

\begin{prop}
Every $T_{2,2k+1}$-torus knotoid is reversible.
\end{prop}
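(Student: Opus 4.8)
The idea is to exhibit, for each $T_{2,2k+1}$-torus knotoid $k$, an explicit isotopy between $k$ and its reverse $-k$, using the canonical diagram supplied by Figure \ref{fig:torusinvertibile1}. Recall that by Theorem \ref{thm:hyper} the torus knot $T_{2,2k+1}$ has a \emph{unique} strong inversion $\tau$, so by Theorem \ref{thm:completo} there is exactly one knotoid associated to it in $\mathbb{K}(S^2)/_\approx$; hence all four \emph{a priori} distinct labelled $\theta$-curves in the class of $\beta(T_{2,2k+1})$ determine knotoids related by reversion and rotation. To prove reversibility it therefore suffices to show that among these four diagrams, the one obtained by swapping the vertex labels $v_0 \leftrightarrow v_1$ (which is $-k$) is isotopic — as a knotoid, \emph{not} up to rotation — to the original diagram $k$.

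First I would pin down the canonical $\theta$-curve for $(T_{2,2k+1},\tau)$: take the standard diagram of $T_{2,2k+1}$ with $\tau$ realised as rotation by $\pi$ about a straight axis meeting the knot in two points, put the axis in standard position as $p(Fix(\tau))$, and read off $e_0 = p(K)$. The key geometric feature, highlighted in the caption of Figure \ref{fig:torusinvertibile1}, is that the box $K$ of $k$ half-twists is \emph{symmetric under rotation by $\pi$} about the axis; this is what makes the torus knot strongly invertible in the first place, and it will be exactly what we exploit. Second, I would write down the four labelled $\theta$-curves and project each to its knotoid via $t^{-1}$ (placing $e_+$ above and $e_-$ below a common arc). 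Third — the heart of the argument — I would show that reversing the orientation of the resulting knotoid diagram (equivalently, swapping $v_0$ and $v_1$) can be undone by a rotation by $\pi$ of the whole diagram in the plane of the page about the point midway between the endpoints, \emph{combined with} the $\pi$-rotation symmetry of the $K$-box; since a planar rotation of a knotoid diagram about a point yields an equivalent knotoid (it is just an ambient isotopy of $S^2$ carried along the endpoints), this produces the required equivalence $k \simeq -k$.

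The main obstacle I anticipate is bookkeeping of the endpoints: a planar $\pi$-rotation of the diagram naturally exchanges the two endpoints, so I must be careful that after composing with the symmetry of $K$ I land on $-k$ and \emph{not} on $-k_{\textbf{rot}}$ or on $k_{\textbf{rot}}$ — the distinction between "rotation of the diagram in the plane of the page" (an allowed $S^2$-isotopy) and the "rotation" operation of Definition \ref{rotatable} (reflection in the line through the endpoints, then mirror) is precisely the subtlety here. To keep this honest I would track the cyclic order of the three edges $e_0, e_+, e_-$ around each vertex of the $\theta$-curve under each move, and check that the composite move fixes the edge labels $e_+$ and $e_-$ while swapping $v_0$ and $v_1$, which is exactly the recipe (from Section \ref{miserve2}) for passing from $t(k)$ to $t(-k)$. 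An alternative, cleaner finish, if the direct diagrammatic manipulation gets unwieldy, is to argue at the level of $\theta$-curves: show that the $\theta$-curve $\theta(T_{2,2k+1},\tau)$ admits an ambient isotopy of $S^3$ fixing $e_+$ and $e_-$ setwise (with labels) and exchanging the two vertices — this isotopy is visibly the $\pi$-rotation of the symmetric torus-knot picture about a second axis — and then invoke the bijection $t$ of Theorem \ref{thm:semigroupsiso} together with the remark in Section \ref{miserve2} identifying vertex-relabelling with reversion.
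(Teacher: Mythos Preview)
Your proposal is correct and takes essentially the same approach as the paper. The paper's proof is precisely your ``alternative, cleaner finish'': it observes that it suffices to treat one of the four knotoids (since rotation and reversion commute), then exhibits a $\pi$-rotation of the $\theta$-curve $\theta(T_{2,2k+1},\tau)$ that preserves each edge label and swaps the two vertices --- exactly the isotopy you describe, and for exactly the reason you give (the $K$-box is invariant under the $\pi$-rotation). Your careful bookkeeping of edge labels to distinguish $-k$ from $-k_{\textbf{rot}}$ is the right concern, and the paper handles it implicitly via the picture; your plan to track it explicitly through the cyclic order at the vertices is a sound way to make that rigorous.
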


\begin{proof}
First, observe that it is enough to prove that one of the knotoids in Figure \ref{fig:torusinvertibile1} is reversible, since rotation and reversion commute. The proof is contained in Figure \ref{fig:torusinvertibile}.

\end{proof}

\begin{figure}[h]
\includegraphics[width=3cm]{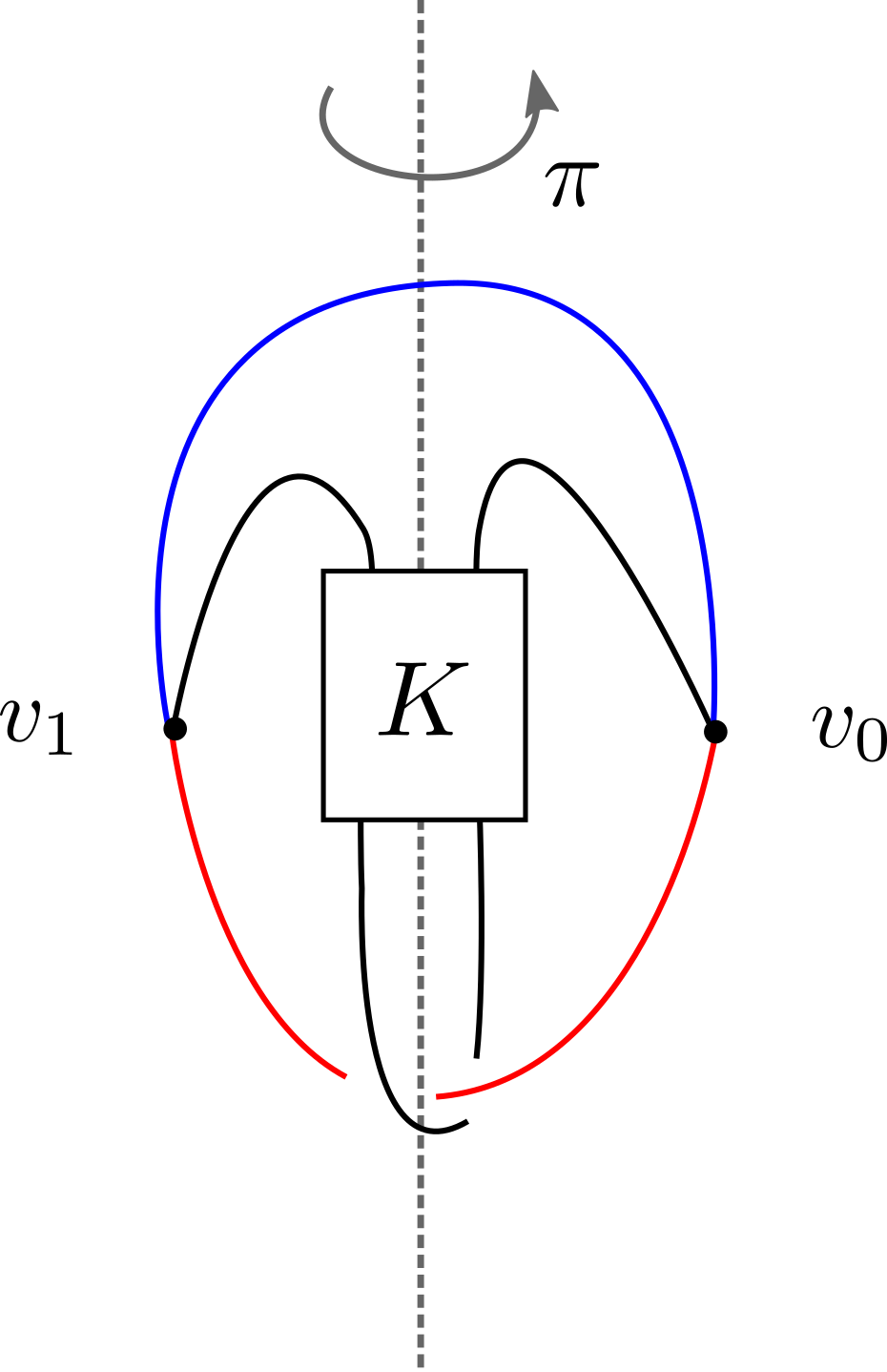}
\caption{A proof that the $T_{2,2k+1}$-torus knotoids are reversible. The rotation by $\pi$ sends the $\theta$-curve associated to a $T_{2,2k+1}$-torus knotoid into the one corresponding to its reverse.}
\label{fig:torusinvertibile}
\end{figure}

The previous proposition, together with Corollary \ref{cor:completo2} implies that there are at most $2$ oriented knotoids associated to every $T_{2,2k+1}$-torus knot. These knotoids are reversible and related by a rotation. 

\section{Amphichirality, reversibility and rotatability in hyperbolic knotoids}\label{sec:inam}
In this section we show how Corollary \ref{cor:completo2} can be improved in two different ways, by considering properties of the symmetry groups of hyperbolic knots.

\subsection{Orbifolds}

Much of this section uses the machinery of orbifolds. The formal definition of an orbifold is given in \cite{scott}[Section 2] and \cite{conemanifolds}[Section 2.1]. We omit it here because it is quite lengthy. Informally, an orbifold is a space where each point $x$ has an open neighbourhood of the form $\mathbb{R}^n / \Gamma$ for some finite subgroup $\Gamma$ of $O(n)$, and where $x$ is taken to the image of the origin. However, it is more than just a topological space, since the orbifold also records the specific group $\Gamma$ attached to $x$. This is called the \emph{local group} of $x$. The \emph{singular locus} of the orbifold is the set of points with non-trivial local group.

The most basic example of an orbifold is the quotient of a smooth manifold $M$ by a finite group of diffeomorphisms. In this paper, we will only consider the case of the group $\mathbb{Z}_2$ acting on a smooth manifold by an orientation-preserving involution $\tau$. In this case, the quotient $M / \tau$ is again a manifold and its singular locus is a properly embedded 1-manifold.

The most signifcant structural result about 3-orbifolds is the Orbifold Theorem \cite{conemanifolds, BoileauLeebPorti}, which gives a version of the Geometrisation Conjecture for orbifolds. Rather than state the precise theorem, we observe the following well-known consequence.

\begin{thm}
\label{Thm:ConjugateToIsometries}
Let $\Gamma$ be a finite group of diffeomorphisms of a finite-volume hyperbolic 3-manifold $M$. Then $\Gamma$ is conjuagate to a group of isometries.
\end{thm}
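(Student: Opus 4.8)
The plan is to geometrise the quotient orbifold $\mathcal{O} = M/\Gamma$ and then invoke Mostow--Prasad rigidity. To begin, I would record the basic structure of $\mathcal{O}$: since $\Gamma$ acts by diffeomorphisms on the \emph{manifold} $M$, the quotient map $M \to \mathcal{O}$ is a finite orbifold covering, so $\mathcal{O}$ is a very good $3$-orbifold and $\pi_1^{\mathrm{orb}}(\mathcal{O})$ contains $\pi_1(M)$ as a subgroup of index $|\Gamma|$. If $M$ is non-compact (but of finite volume), I would first pass to the compact manifold $\bar M$ of which $M$ is the interior; every diffeomorphism of $M$ is proper, hence extends over $\bar M$, so $\Gamma$ acts on $\bar M$ with the peripheral tori permuted among themselves, and $\bar M/\Gamma$ is a compact orbifold whose boundary is a disjoint union of Euclidean $2$-orbifolds. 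If some elements of $\Gamma$ reverse orientation, $\mathcal{O}$ is non-orientable or has mirror locus; this is harmless, as the orbifold version of geometrisation applies equally in that setting, or one may first run the argument on the orientation-preserving subgroup.

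The heart of the proof is to check that $\mathcal{O}$ satisfies the hypotheses of the Orbifold Theorem \cite{conemanifolds, BoileauLeebPorti}, so that $\mathcal{O}$ admits a complete finite-volume hyperbolic structure. The orbifold universal cover of $\mathcal{O}$ coincides with the universal cover $\mathbb{H}^3$ of $M$; from this one reads off that $\mathcal{O}$ is aspherical and irreducible, and that $\pi_1^{\mathrm{orb}}(\mathcal{O})$ is infinite. It is topologically atoroidal rel its cusps: an essential Euclidean $2$-suborbifold of $\mathcal{O}$ would produce a virtually-$\mathbb{Z}^2$ subgroup of $\pi_1^{\mathrm{orb}}(\mathcal{O})$, hence a $\mathbb{Z}^2$ subgroup of the finite-index subgroup $\pi_1(M)$, which in a finite-volume hyperbolic $3$-manifold must be peripheral. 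Finally, $\mathcal{O}$ is not Seifert fibred and carries none of the other seven Thurston geometries, since otherwise its finite cover $M$ would inherit that structure, contradicting the hyperbolicity of $M$. The Orbifold Theorem therefore gives $\mathcal{O}$ a complete finite-volume hyperbolic metric.

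It then remains to transport this back to $M$. Pulling the hyperbolic structure on $\mathcal{O}$ back along the covering $M \to \mathcal{O}$ yields a second complete finite-volume hyperbolic metric $g'$ on $M$ for which every element of $\Gamma$ is an isometry, the $\Gamma$-action being the deck group of the cover. Since the identity map $(M, g') \to (M, g)$ is a proper homotopy equivalence between finite-volume hyperbolic $3$-manifolds, Mostow--Prasad rigidity provides an isometry $\phi \colon (M, g') \to (M, g)$ properly homotopic to it. Conjugating by $\phi$ turns $\Gamma$ into the group $\phi \Gamma \phi^{-1}$ of isometries of $(M, g) = M$; as $\phi$ is a self-diffeomorphism of $M$, this group is conjugate to $\Gamma$ in $\mathrm{Diff}(M)$, which is the assertion of the theorem.

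The step I expect to be the main obstacle is the middle one, namely showing that $\mathcal{O}$ is genuinely hyperbolic. The delicate points are that atoroidality of the quotient has to be argued at the level of orbifold fundamental groups rather than by a naive lifting of surfaces, and that the cusped case requires some care in passing between $M$ and $\bar M$ and in identifying the ends of $\mathcal{O}$ with Euclidean $2$-orbifold cross-sections. Once $\mathcal{O}$ is known to be hyperbolic, the passage back to $M$ via rigidity is entirely standard.
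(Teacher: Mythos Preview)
Your proposal is correct and follows essentially the same route as the paper: apply the Orbifold Theorem to $M/\Gamma$ to obtain a hyperbolic structure, pull it back to a second hyperbolic metric on $M$ for which $\Gamma$ acts by isometries, and then use Mostow--Prasad rigidity to produce the conjugating diffeomorphism. The paper's proof is terser---it simply asserts that the Orbifold Theorem makes $M/\Gamma$ hyperbolic---whereas you have spelled out the verification of the hypotheses (irreducibility, atoroidality, non-Seifert-fibred) and the care needed in the cusped case.
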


\begin{proof}
Mostow Rigidity implies that $\Gamma$ is homotopic to a group of isometries, but this is weaker than our desired conclusion. Instead, we use the Orbifold Theorem. This implies that the orbifold $M / \Gamma$ is hyperbolic. Hence, $M$ has a hyperbolic structure upon which $\Gamma$ acts by isometries. By Mostow Rigidity, this hyperbolic structure is isometric to our given one, via an isometry $h$. Thus, $h$ is our required conjugating diffeomorphism.
\end{proof}

There is a notion of the fundamental group $\pi_1(\mathcal{O})$ of an orbifold $\mathcal{O}$ and of a covering map between orbifolds. For a definition of these terms, see \emph{e.g.}~\cite{scott} or \cite{conemanifolds}.

\subsection{Reversible knotoids}\label{invertible}
Let's turn our attention back to oriented knotoids. Even if the maps $\gamma_S$ and $\gamma_T$ cannnot distinguish between two knotoids differing only in the orientation, using $\gamma_S$ it is possible to tell whether a hyperbolic knotoid is reversible or not, and whether it is rotatable or not. The following lemma follows from Theorem \ref{thm:semigroupsiso}.

\begin{lemma}\label{propp}
An oriented knotoid $k \in \mathbb{K}(S^2) $ is reversible (respectively rotatable) if and only if there is an isotopy taking the $\theta$-curve $t(k)$ back to itself, that preserves each edge but swaps the two vertices (respectively, that preserves $e_0$, and swaps $e_+$ and $e_-$ and the vertices). 
\end{lemma}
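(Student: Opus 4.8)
The plan is to transport the symmetry question along the semigroup isomorphism $t$ of Theorem \ref{thm:semigroupsiso}. Recall that reversibility of $k$ means $k = -k$ as oriented knotoids in $\mathbb{K}(S^2)$, and rotatability means $k = k_{\textbf{rot}}$. From the discussion in Section \ref{miserve2} we already know exactly how $t$ interacts with these two operations: $t(-k)$ is obtained from $t(k)$ by exchanging the labels of the two vertices $v_0$ and $v_1$ (while keeping all edge labels fixed), and $t(k_{\textbf{rot}})$ is obtained from $t(k)$ by swapping the labels $e_+$ and $e_-$ (with the attendant relabelling of the vertices forced by the isotopy that reinstates $e_+$ above and $e_-$ below). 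So the proof is essentially a matter of spelling out what an equality of labelled $\theta$-curves means.

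First I would recall that $t$ is a bijection onto $\Theta^s$, the set of \emph{isotopy classes} of simple labelled $\theta$-curves, where isotopy is required to preserve all labels (of vertices and edges). Hence $k = -k$ in $\mathbb{K}(S^2)$ if and only if $t(k) = t(-k)$ in $\Theta^s$. Now $t(-k)$ is represented by the same embedded graph as $t(k)$ but with the vertex labels swapped; so $t(k) = t(-k)$ means precisely that there is a label-preserving ambient isotopy carrying the embedded $\theta$-curve underlying $t(k)$ to itself but interchanging $v_0$ and $v_1$. Unwinding, this is a self-isotopy of the $\theta$-curve $t(k)$ that fixes each edge setwise (as $e_0$, $e_+$, $e_-$ are not relabelled) but swaps the two vertices — exactly the statement of the lemma in the reversible case. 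The rotatable case is identical with ``swap vertex labels'' replaced by ``swap the labels $e_+ \leftrightarrow e_-$ (and the vertices)'': $t(k_{\textbf{rot}})$ is the same graph with those labels exchanged, so $k = k_{\textbf{rot}}$ is equivalent to the existence of a self-isotopy of $t(k)$ preserving $e_0$ and interchanging $e_+$ with $e_-$ and the two vertices.

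The one point deserving a little care — and the only real content beyond bookkeeping — is matching the description of $t(k_{\textbf{rot}})$ from Section \ref{miserve2} with the statement: there, $k_{\textbf{rot}}$ is described via the standard position of the $\theta$-curve, swapping the $e_\pm$ labels and then isotoping to restore $e_+$ above the plane. One should note that this isotopy is part of the data defining $t(k_{\textbf{rot}})$ as an element of $\Theta^s$, and that it also swaps the vertices (this is visible in Figure \ref{fig:thetainvertite2}), which is why the lemma says the self-isotopy ``swaps $e_+$ and $e_-$ \emph{and the vertices}'' rather than leaving the vertices fixed. I expect this to be the main (though still minor) obstacle: being precise about which labels the canonical isotopy defining rotation actually permutes, so that the ``if and only if'' is stated with the correct permutation of vertices. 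Everything else is a direct consequence of $t$ being a label-sensitive bijection.
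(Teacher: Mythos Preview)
Your approach is exactly the paper's: the lemma is stated to ``follow from Theorem \ref{thm:semigroupsiso}'', and you correctly unpack this by using that $t$ is a label-sensitive bijection together with the description in Section \ref{miserve2} of how reversion and rotation act on the labelling of $t(k)$. The reversible case is handled cleanly and matches the paper.

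There is, however, a genuine wrinkle in the rotatable case that you paper over with an incorrect claim. You assert that the isotopy putting the relabelled $\theta$-curve back into standard position ``also swaps the vertices (this is visible in Figure \ref{fig:thetainvertite2})''. But the text of Section \ref{miserve2} says the opposite: ``Their $\theta$-curves differ from each other simply by swapping the labels on the $e_-$ and $e_+$ edges, and \emph{leaving all other labels unchanged}.'' This is consistent with Definition \ref{rotatable} and Remark \ref{rotation3d}: rotation is realised by a $\pi$-rotation about a horizontal axis through both special lines, which fixes head and tail. So $t(k_{\textbf{rot}})$ differs from $t(k)$ by swapping $e_\pm$ only, with $v_0,v_1$ fixed; the corresponding self-isotopy of $t(k)$ therefore \emph{fixes} the vertices rather than swapping them. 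Your justification for the ``and the vertices'' clause does not stand.

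What this really exposes is a slight inconsistency in the paper's phrasing of the lemma. Note that when Lemma \ref{propp} is invoked in the proof of Proposition \ref{lemma:xxx}, it is used for the case ``$k$ equivalent to $-k_{\textbf{rot}}$'', where swapping both $e_\pm$ \emph{and} the vertices is indeed the correct permutation. For rotatability as defined ($k=k_{\textbf{rot}}$), the vertices should be preserved. Your argument is fine once you drop the unsupported vertex-swap claim and state the permutation that actually matches Section \ref{miserve2}; the residual discrepancy is in the lemma's wording, not in your method.
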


Furthermore, we can prove that the isotopies of the previous lemma have order $2$.

\begin{prop}\label{lemma:xxx}
A hyperbolic, oriented knotoid $k \in \mathbb{K}(S^2) $ is reversible (respectively, equivalent to the reverse of its rotation) if and only if there is an order two orientation preserving homeomorphism of $S^3$ taking the $\theta$-curve $t(k)$ back to itself, that perserves each edge but swaps the two vertices (respectively, that preserves $e_0$ and swaps the two vertices and $e_-$ and $e_+$).
\end{prop}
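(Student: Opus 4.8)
The plan is to leverage Lemma \ref{propp} together with the Orbifold Theorem in the form of Theorem \ref{Thm:ConjugateToIsometries}. By Lemma \ref{propp}, the knotoid $k$ is reversible (resp.\ equivalent to the reverse of its rotation) if and only if there is an isotopy of $S^3$ carrying the $\theta$-curve $t(k)$ to itself, preserving each edge but swapping the two vertices (resp.\ preserving $e_0$ and swapping both the vertices and $e_-$, $e_+$). Such an isotopy is a self-homeomorphism $f$ of the pair $(S^3, t(k))$ that is isotopic to the identity (as a homeomorphism of $S^3$, ignoring the $\theta$-curve); in particular $f$ preserves the orientation of $S^3$. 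So what has to be shown is that whenever there is \emph{some} orientation-preserving homeomorphism of $(S^3,t(k))$ realising the required permutation of the edges and vertices, there is one of order exactly $2$.

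The key step is to pass to the complement. Let $M$ be the complement of an open regular neighbourhood of $t(k)$ in $S^3$; since $k$ is a hyperbolic knotoid, $t(k)$ is a hyperbolic $\theta$-curve (its exterior admits a finite-volume hyperbolic structure), so $M$ is a finite-volume hyperbolic $3$-manifold. A homeomorphism $f$ of $(S^3, t(k))$ inducing the given combinatorial symmetry restricts to a homeomorphism of $M$ that permutes the cusps (one for $e_0$ and one for the circle $e_-\cup e_+$, or appropriate cusps depending on how one thickens the graph) according to the prescribed pattern. The isotopy class of $f$ in $\mathrm{MCG}(M)$ has finite order: indeed $\mathrm{MCG}(M)$ is finite by Mostow rigidity, and more to the point the square $f^2$ induces the \emph{trivial} permutation of the edges and vertices, so $f^2$ lies in the subgroup of the symmetry group acting trivially on the combinatorial data; one checks this subgroup is either trivial or does not obstruct reducing $f$ to an involution. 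Concretely, let $G$ be the finite cyclic (or at worst finite) group generated by the isotopy class of $f$ in $\mathrm{MCG}(M)$. By Theorem \ref{Thm:ConjugateToIsometries}, $G$ is conjugate to a group of isometries of $M$; replacing $f$ by a representative isometry, $f$ now has finite order, say $2m$. Then $f^m$ is an isometric involution of $M$ still inducing the desired permutation of edges and vertices (since $f$ and $f^m$ differ by an element acting trivially on the combinatorics, because $f^2$ already does, provided $m$ is odd; if $m$ is even we instead use the fact that the symmetry we want is an involution in the combinatorial sense and track which power realises it). Finally, an isometric involution of $M$ extends over the Dehn fillings that reconstruct $S^3$, giving an order-two orientation-preserving homeomorphism of $(S^3, t(k))$ realising the reversibility (resp.\ reverse-rotation) symmetry, which is exactly the statement.

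In more detail, the argument that the order can be taken to be $2$ rather than merely finite goes as follows. The combinatorial symmetries in question — ``swap the vertices, fix all edges'' and ``swap the vertices, swap $e_-\leftrightarrow e_+$, fix $e_0$'' — are each involutions of the abstract labelled graph $\theta$. So if $f$ is an isometry of $M$ inducing one of them, then $f^2$ induces the identity permutation, i.e.\ $f^2$ lies in the group $G_0$ of symmetries of $t(k)$ that fix every edge and vertex. Since $k$ is hyperbolic, $t(k)$ is a prime, non-trivial $\theta$-curve whose exterior is atoroidal and acylindrical; a standard argument (using that a non-trivial symmetry of a hyperbolic $3$-manifold acts non-trivially on the cusps together with the fact that an orientation-preserving symmetry fixing each boundary component and acting trivially on homology must be trivial, or invoking that such a symmetry of a hyperbolic $\theta$-curve exterior fixing the peripheral structure is the identity) shows $G_0$ is trivial. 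Hence $f$ itself is an involution, and we are done. The expected main obstacle is precisely this last point: controlling $G_0$, i.e.\ ruling out nontrivial symmetries of the hyperbolic $\theta$-curve exterior that fix each edge and vertex. One must be careful that ``hyperbolic knotoid'' gives enough rigidity — in particular that $M$ is genuinely a hyperbolic manifold and not merely that some associated double cover is — and one should cite the relevant uniqueness statements for symmetries of hyperbolic link complements (the cusp-permutation refinement of Mostow rigidity) to close this gap.
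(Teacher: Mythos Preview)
Your overall strategy---upgrade the combinatorial symmetry to an isometry via rigidity, then show its square is trivial---matches the paper's. But there is a genuine gap in the object you choose to work with.

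You take $M$ to be the exterior $S^3\setminus N(t(k))$ of the $\theta$-curve and assert that it is a finite-volume hyperbolic $3$-manifold because ``$k$ is a hyperbolic knotoid''. In this paper, \emph{hyperbolic knotoid} means precisely that the double branched cover $\gamma_S(k)$ is a hyperbolic knot; it says nothing directly about the $\theta$-curve exterior. Moreover, $\partial M$ is a single genus-$2$ surface, so $M$ cannot be a finite-volume cusped hyperbolic manifold in the usual sense (your parenthetical about ``one cusp for $e_0$ and one for the circle $e_-\cup e_+$'' reflects a misreading of the topology). You yourself flag this worry at the end, but you do not resolve it, and your subsequent appeal to ``a standard argument'' that the group $G_0$ of label-preserving symmetries is trivial is not justified for this $M$. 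The intermediate passage through $f^m$ is also incomplete: if the order of $f$ is $2m$ with $m$ even, then $f^m$ acts trivially on the combinatorics and does not realise the desired permutation, so that branch does not produce the involution you want.

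The paper sidesteps all of this by working not with the manifold $M$ but with the hyperbolic \emph{orbifold} $\mathcal{O}=\bigl(S^3\setminus\mathrm{int}\,N(\gamma_S(k))\bigr)/\tau$, whose hyperbolicity is immediate from the Orbifold Theorem once $\gamma_S(k)$ is hyperbolic. The homeomorphism $\rho$ coming from Lemma~\ref{propp} descends to $\mathcal{O}$ and is homotopic to an isometry $\overline\rho$. The crucial point is that $\partial\mathcal{O}$ is a Euclidean pillowcase orbifold (a $2$-sphere with four order-$2$ cone points), and an isometry of a Euclidean pillowcase that is isotopic to the identity rel cone points is equal to the identity. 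Since $\rho^2$ acts trivially on the boundary data in both cases, $\overline\rho^2|_{\partial\mathcal{O}}=\mathrm{id}$, hence $\overline\rho^2=\mathrm{id}$ on $\mathcal{O}$, and $\overline\rho$ extends to the required order-$2$ homeomorphism of $(S^3,t(k))$. This pillowcase argument is exactly what replaces your unproved claim that $G_0$ is trivial.
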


\begin{proof}
One direction is clear. So suppose that $k$ is a hyperbolic knotoid that is either reversible or equivalent to the reverse of its rotation. Let $t(k)$ be the corresponding $\theta$-curve, with edges labeled $e_-$, $e_+$ and $e_0$, and where $e_- \cup e_+$ is the preferred constituent unknot. By hypothesis, $\gamma_S(k)$ is a hyperbolic knot. The involution on the hyperbolic manifold $S^3 \setminus \text{int}(N(\gamma_S(k)))$ is realised by a hyperbolic isometry $\tau$ by Theorem \ref{Thm:ConjugateToIsometries}. The quotient $(S^3 \setminus \text{int}(N(\gamma_S(k)))/\tau$ is therefore a hyperbolic orbifold $\mathcal{O}$. Its underlying space is the $3$-ball $S^3 \setminus \text{int}(N(e_0))$, and its singular locus is the intersection with $e_- \cup e_+$. Now, we are assuming that $k$ is either reversible or equivalent to the reverse of its rotation. Hence, by Lemma \ref{propp}, there is a homeomorphism $\rho$ of $S^3$ taking $t(k)$ back to itself, that perserves each edge but swaps the two vertices, or that swaps both the vertices and the edges $e_-$ and $e_+$. In both cases, this therefore induces a homeomorphism of $\mathcal{O}$ that preserves its singular locus. By Mostow rigidity, this is homotopic to an isometry $\overline{\rho}$ of $\mathcal{O}$. In both cases, the action of $\overline{\rho}^2$ on $\partial \mathcal{O}$ is isotopic to the identity, via an isotopy that preserves the singular points throughout. Hence, because it is an isometry of a Euclidean pillowcase orbifold, $\overline{\rho}^2$ is the identity. Therefore, $\overline{\rho}^2$ is actually equal to the identity on $\mathcal{O}$. So, in both cases, $\overline \rho$ extends to the required order two homeomorphism of $S^3$, taking $t(k)$ back to itself, that perserves each edge but swaps the two vertices or that preserves $e_0$ and swaps the vertices and the two other edges. 
\end{proof}

Sakuma and Kodama \cite{kodama} proved that, given an invertible hyperbolic knot $K$ with a strong involution $\tau$, the existence of such symmetries for the $\theta$-curve $\theta(K,\tau)$ is completely determined by $Sym(S^3,K)$.

\begin{thm}[Proposition $1.2$, \cite{kodama}]\label{thm:thetaandinvertible}
Given an invertible hyperbolic knot $K$ with a strong inversion $\tau$, then $Sym(S^3,K)$ admits cyclic (respectively free) period $2$ if and only if there exist an orientation preserving involution of $S^3$ fixing setwise $\theta(K,\tau)$ that preserves each edge but swaps the two vertices (respectively, that preserves $e_0$, and swaps $e_+$ and $e_-$ and the vertices)\footnote{In \cite{kodama} Sakuma and Kodama call $\theta(K,\tau)$ \emph{strongly reversible} in the first case, and say that $\theta(K,\tau)$ has \emph{period $2$ centered in $e_0 = p(K)$} in the second case.}. 

\end{thm}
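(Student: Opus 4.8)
\medskip
\noindent\textbf{Proof proposal.} The idea is to move symmetries across the double branched cover, using hyperbolicity to replace isotopy classes by isometries throughout. I would set $M = S^3 \setminus \mathrm{int}(N(K))$, which is hyperbolic, so $Sym(S^3,K)\cong \mathrm{Isom}(M)$ is finite and realised by isometries of one fixed hyperbolic metric (Theorem~\ref{Thm:ConjugateToIsometries}); I arrange $\tau$ to be an isometry. The quotient orbifold $\mathcal{O}=M/\langle\tau\rangle$ is hyperbolic: its underlying space is the ball $S^3\setminus\mathrm{int}(N(e_0))$, its singular locus is $(e_-\cup e_+)\setminus\mathrm{int}(N(e_0))$, and its boundary is the Euclidean pillowcase. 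The mechanism is then that an orientation-preserving involution $g$ of $S^3$ fixing $\theta:=\theta(K,\tau)$ setwise necessarily preserves the branch locus $U:=e_-\cup e_+$, hence lifts through the double branched cover $p\colon S^3\to S^3$; and conversely an orientation-preserving symmetry of $S^3$ preserving $K$ and commuting with $\tau$ descends to $\mathcal{O}$, hence to an orientation-preserving homeomorphism of $S^3$ fixing $\theta$ setwise. The content is to match the dichotomy cyclic/free with the dichotomy ``$g$ preserves each of $e_\pm$'' / ``$g$ swaps $e_-$ and $e_+$''.

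\medskip
\noindent\emph{From the period to the $\theta$-curve symmetry.} Suppose $K$ has cyclic (resp.\ free) period $2$, realised by an involution $\phi$; realise $\phi$ by an isometry $\phi_0$ of $M$. Then $\langle\tau,\phi_0\rangle$ is a finite group of isometries generated by two involutions, hence dihedral, with rotation subgroup generated by $r:=\tau\phi_0$. Since $\tau$ reverses and $\phi_0$ preserves the orientation of $K$, the element $r$ reverses it, so $r|_K$ is a finite-order orientation-reversing homeomorphism of a circle, hence an involution, so $r^2|_K=\mathrm{id}_K$; as $r^2$ is an orientation-preserving finite-order diffeomorphism of $S^3$ whose fixed set would then contain the hyperbolic knot $K$, the resolved Smith conjecture forces $r^2=\mathrm{id}$. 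Thus $\langle\tau,\phi_0\rangle\cong\mathbb{Z}_2\times\mathbb{Z}_2$ and $\phi_0$ commutes with $\tau$; and, since $\phi_0$ and $\phi$ induce the same outer automorphism of $\pi_1(M)$, equivariant rigidity of the hyperbolic pair makes the actions $\langle\phi_0\rangle$ and $\langle\phi\rangle$ conjugate, so $\phi_0$ is again of cyclic (resp.\ free) period $2$. Now $\phi_0$ descends to an orientation-preserving involution $g$ of $S^3$ fixing $\theta$ setwise; as $\phi_0|_K$ is a nontrivial orientation-preserving involution of the circle $K$ it is fixed-point free, so it interchanges the two branch points $K\cap\mathrm{Fix}(\tau)$, whence $g$ swaps $v_0$ and $v_1$ and preserves $e_0$ and $U$. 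Finally the involution $\phi_0|_{\mathrm{Fix}(\tau)}$ of the circle $\mathrm{Fix}(\tau)$ is either a reflection (preserving each of the two arcs lying over $e_-,e_+$) or the free rotation by $\pi$ (swapping them); a short analysis of the $\mathbb{Z}_2\times\mathbb{Z}_2$-action on $S^3$ — comparing the fixed sets of $\tau$, $\phi_0$ and the strong inversion $\tau\phi_0$ on the two sheets of $p^{-1}(\mathrm{Fix}(g))$ — shows the reflection alternative holds exactly when $\phi_0$ has cyclic period $2$. Hence $g$ preserves each of $e_\pm$ in the cyclic case and swaps them in the free case.

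\medskip
\noindent\emph{From the $\theta$-curve symmetry to the period.} Conversely, let $g$ be an orientation-preserving involution of $S^3$ fixing $\theta$ setwise and either preserving each edge and swapping the vertices, or preserving $e_0$ and swapping $e_-,e_+$ and the vertices. In both cases $g$ preserves $U$, so it lifts through $p$ to $\tilde g$ with $\tilde g(K)=K$ and $\tilde g\tau=\tau\tilde g$; of the two lifts $\tilde g,\tau\tilde g$ exactly one preserves the orientation of $K$, since in the local model $K$ is a $2$-fold cover of the arc $e_0$ branched at the endpoints and reversing $e_0$ lifts to one rotation and one reflection of the circle $K$, differing by $\tau|_K$. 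Let $\phi$ be that lift. Then $\phi$ is orientation-preserving on $S^3$, preserves $K$ with its orientation, and $\phi^2$ is a lift of $g^2=\mathrm{id}$, hence lies in $\{\mathrm{id},\tau\}$; since $\phi$ preserves but $\tau$ reverses the orientation of $K$, $\phi^2=\mathrm{id}$. By the resolved Smith conjecture $\mathrm{Fix}(\phi)$ is an unknotted circle or empty, so $\phi$ realises cyclic or free period $2$; and $\mathrm{Fix}(\phi)\neq\varnothing$ precisely in the edge-preserving case, because then $g$ has a fixed point in the interior of each of $e_-,e_+$, lying on the branch locus and so lifting to a fixed point of $\phi$, whereas in the edge-swapping case $g$ is fixed-point free on $U$ and its only fixed point on $\theta$, the midpoint of $e_0$, lies beneath the two points of $K$ swapped by $\phi$, from which $\phi$ acts freely on $p^{-1}(\mathrm{Fix}(g))$ and hence $\mathrm{Fix}(\phi)=\varnothing$.

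\medskip
\noindent The main obstacle is the forward direction, in two places: the appeal to equivariant rigidity ensuring that the realised isometry $\phi_0$ keeps the period type of $\phi$, and the verification that in the cyclic case $g$ cannot swap $e_-$ with $e_+$. The latter is the only step needing genuine three-dimensional input, namely Smith theory for the $\mathbb{Z}_2\times\mathbb{Z}_2$-action on $S^3$ generated by $\tau$ and $\phi_0$ — analysing how the three fixed-set unknots $\mathrm{Fix}(\tau)$, $\mathrm{Fix}(\phi_0)$, $\mathrm{Fix}(\tau\phi_0)$ sit relative to $K$ and which of them is empty — rather than any formal manipulation. Everything else (the reduction to isometries, the dihedral-group computation, and the lifting through the branched cover) is routine.
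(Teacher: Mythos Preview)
The paper does not prove this statement: it is quoted verbatim from Kodama--Sakuma \cite{kodama} and used as a black box. So there is no ``paper's proof'' to compare against, only your proposal to assess.

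Your overall architecture is sound and is essentially the standard one: realise everything by hyperbolic isometries, show that $\tau$ and the period--$2$ symmetry generate a Klein four--group, and pass symmetries back and forth through the branched cover $p$. The dihedral computation forcing $(\tau_0\phi_0)^2=\mathrm{id}$ via the Smith conjecture is correct, as is the observation that the two lifts of $g$ differ by $\tau$ and exactly one of them preserves the orientation of $K$.

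There is, however, a genuine gap in your reverse direction in the edge--swapping case. You argue that $g$ is fixed--point free on $U$ and that its unique fixed point on $\theta$ (the midpoint of $e_0$) lifts to a pair swapped by $\phi$, and then conclude ``$\phi$ acts freely on $p^{-1}(\mathrm{Fix}(g))$''. But $\mathrm{Fix}(g)$ is an entire unknotted circle $V$, not just $V\cap\theta$; you have only ruled out fixed points of $\phi$ lying over $V\cap\theta$. The missing step is this: since $V\cap U=\varnothing$, the restriction $p\colon p^{-1}(V)\to V$ is an honest $2$--fold cover, and $p^{-1}(V)=\mathrm{Fix}(\phi)\sqcup\mathrm{Fix}(\tau\phi)$. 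Now $\tau\phi$ is a strong inversion, so $\mathrm{Fix}(\tau\phi)$ is a circle; $\tau$ acts on it freely (any common fixed point of $\tau$ and $\tau\phi$ would lie over $U\cap V=\varnothing$), so $p$ maps $\mathrm{Fix}(\tau\phi)$ onto $V$ as a connected $2$--fold cover. Since $p^{-1}(V)\to V$ is already $2$--to--$1$, this forces $p^{-1}(V)=\mathrm{Fix}(\tau\phi)$ and hence $\mathrm{Fix}(\phi)=\varnothing$. The same connectedness--of--$\mathrm{Fix}(g)$ trick (via the Smith conjecture applied to $g$) is exactly what makes your deferred ``short analysis'' in the forward cyclic case go through: if $\mathrm{Fix}(\tau)\cap\mathrm{Fix}(\phi_0)=\varnothing$ then $p(\mathrm{Fix}(\phi_0))$ and $p(\mathrm{Fix}(\tau\phi_0))$ would be two disjoint circles whose union is the single circle $\mathrm{Fix}(g)$, a contradiction. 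So the two loose ends in your write--up are filled by one and the same argument, which you should make explicit.
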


As an immediate corollary of Theorem \ref{thm:thetaandinvertible} and Lemma \ref{lemma:xxx} we have the following characterisation for hyperbolic knotoids, which is a restatement of Theorem \ref{llll}.

\begin{thm}\label{thm:detectinvert}
A hyperbolic, oriented knotoid $k \in \mathbb{K}(S^2) $ is reversible if and only if $\gamma_S(k)$ has cyclic period $2$. Analogously, it is equivalent to the reverse of its rotation if and only if $\gamma_S(k)$ has free period $2$.
\end{thm}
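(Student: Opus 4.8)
The plan is to derive Theorem \ref{thm:detectinvert} as a formal consequence of the three ingredients that have already been assembled in this section, namely Lemma \ref{propp} (an equivalence between symmetries of the oriented knotoid and symmetries of the associated $\theta$-curve), Proposition \ref{lemma:xxx} (which upgrades an arbitrary such symmetry-realising homeomorphism to an honest \emph{involution} when $k$ is hyperbolic), and Theorem \ref{thm:thetaandinvertible} of Sakuma--Kodama (which translates the existence of such an involution of the $\theta$-curve $\theta(K,\tau)$ into a periodicity statement about $Sym(S^3,K)$). The one bookkeeping point to address is that $\theta(K,\tau)$ in Theorem \ref{thm:thetaandinvertible} is the $\theta$-curve built from the strongly invertible knot $(K,\tau) = \gamma_S(k)$, and this must be identified with $t(k)$, the $\theta$-curve built directly from the knotoid; but this identification is exactly the content of the equality $\Pi \circ \gamma_S = \mathrm{id}$ established in Section \ref{sec:proofcorrespondence} (the map $\beta$ sends $\gamma_S(k)$ to the class of $\theta(\gamma_S(k))$ in $\Theta^s/_\approx$, and $t_\approx^{-1}$ of that is the class of $k$), so the preferred constituent unknot $e_-\cup e_+$ of $t(k)$ is $p(\mathrm{Fix}(\tau))$ and $e_0 = p(K)$. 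In particular $\gamma_S(k)$ being hyperbolic is exactly the hypothesis ``$K$ is an invertible hyperbolic knot'' needed to invoke Theorem \ref{thm:thetaandinvertible}.

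Concretely, I would argue as follows. Fix a hyperbolic oriented knotoid $k \in \mathbb{K}(S^2)$, so that $\gamma_S(k) = (K,\tau)$ with $K$ a hyperbolic strongly invertible knot and $t(k) = \theta(K,\tau)$ under the identification above. For the reversibility statement: if $k$ is reversible, Proposition \ref{lemma:xxx} produces an order-two orientation-preserving homeomorphism of $S^3$ carrying $t(k) = \theta(K,\tau)$ to itself, preserving each edge and swapping the two vertices; by Theorem \ref{thm:thetaandinvertible} this is equivalent to $Sym(S^3,K)$ admitting cyclic period $2$, i.e.\ to $\gamma_S(k)$ having cyclic period $2$. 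Conversely, if $\gamma_S(k)$ has cyclic period $2$, Theorem \ref{thm:thetaandinvertible} gives an orientation-preserving involution of $S^3$ fixing $\theta(K,\tau) = t(k)$ setwise, preserving each edge and swapping the vertices; such an involution is in particular a homeomorphism of the type described in Lemma \ref{propp}, so by that lemma $k$ is reversible. The free-period-$2$ case is handled identically, replacing ``preserves each edge but swaps the two vertices'' by ``preserves $e_0$ and swaps $e_+$, $e_-$ and the vertices'' throughout, and ``reversible'' by ``equivalent to the reverse of its rotation.''

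I do not expect a genuine obstacle here: the theorem is essentially the confluence of Proposition \ref{lemma:xxx} (whose proof, using the Orbifold Theorem, Mostow rigidity, and the rigidity of the Euclidean pillowcase orbifold, is the real work) and the cited Sakuma--Kodama result. The only point requiring care is a clean matching of conventions: one must check that the edge-labels $e_-$, $e_+$, $e_0$ of $t(k)$ agree with those used in the statement of Theorem \ref{thm:thetaandinvertible} for $\theta(K,\tau)$ — which they do, since both label $e_0$ as the image of the knot and $e_-\cup e_+$ as the image of the fixed-point unknot — and that ``reversible'' and ``equivalent to the reverse of its rotation'' are precisely the two knotoid symmetries that Lemma \ref{propp} pairs with the two $\theta$-curve symmetries appearing in Theorem \ref{thm:thetaandinvertible}. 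With that dictionary in place, the proof is a two-line citation in each of the two cases.
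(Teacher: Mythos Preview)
Your proposal is correct and matches the paper's approach exactly: the paper states Theorem \ref{thm:detectinvert} as an immediate corollary of Theorem \ref{thm:thetaandinvertible} and Proposition \ref{lemma:xxx}, with no further argument. Your write-up is more detailed (you spell out the identification $t(k) = \theta(\gamma_S(k))$ and invoke Lemma \ref{propp} explicitly for the converse direction), but this is precisely the two-line citation the paper has in mind.
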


\subsection{Hyperbolic knotoids are not rotatable}
Recall that a knotoid $k \in \mathbb{K}(S^2)$ is called rotatable if it is isotopic to its rotation $k_{\textbf{rot}}$. For hyperbolic knotoids, this is never the case. 

\begin{thm}\label{thm:rotmarc}
A hyperbolic knotoid is never rotatable.
\end{thm}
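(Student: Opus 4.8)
The plan is to argue by contradiction along the same lines as the proof of Proposition~\ref{lemma:xxx}, upgrading a hypothetical rotation symmetry of the $\theta$-curve to an honest involution of $S^3$ and then deriving a geometric impossibility. Suppose $k \in \mathbb{K}(S^2)$ is a hyperbolic knotoid that is rotatable. By Lemma~\ref{propp}, there is an isotopy of $S^3$ carrying the $\theta$-curve $t(k)$ to itself, preserving $e_0$, and swapping $e_+$ with $e_-$ \emph{while fixing each of the two vertices}. This is the key structural difference from the ``reverse-of-rotation'' case treated in Proposition~\ref{lemma:xxx}: there the vertices are swapped, here they are fixed. As in Proposition~\ref{lemma:xxx}, since $\gamma_S(k)$ is hyperbolic, the branched-cover involution $\tau$ is realised by an isometry of the hyperbolic manifold $S^3 \setminus \mathrm{int}(N(\gamma_S(k)))$, and the quotient orbifold $\mathcal{O}$ is hyperbolic with underlying space a $3$-ball $S^3 \setminus \mathrm{int}(N(e_0))$ and singular locus the arc(s) $(e_- \cup e_+) \cap \mathcal{O}$; its boundary is a Euclidean pillowcase orbifold $\partial\mathcal{O}$ with four cone points.

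Next I would transport the hypothetical rotation symmetry $\rho$ of $t(k)$ to a homeomorphism of $\mathcal{O}$ preserving the singular locus, and apply Mostow rigidity to homotope it to an isometry $\overline{\rho}$ of $\mathcal{O}$. The action of $\overline{\rho}$ on $\partial\mathcal{O}$ is, up to isotopy through symmetries of the pillowcase, the map induced by $\rho$. I claim this induced map on the four-punctured sphere / pillowcase is \emph{isotopically non-trivial of infinite order}, or at least cannot be realised by an isometry of the Euclidean pillowcase, which will be the contradiction. The point is to read off exactly what $\rho$ does to $\partial\mathcal{O}$: the two cone points coming from $e_0$ (the lifts of the two endpoints, i.e.\ the vertices $v_0, v_1$) are each fixed, and the two cone points coming from $e_\pm$ are interchanged. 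One then checks — either by examining the possible isometries of a Euclidean pillowcase with its four cone points, or equivalently by the classification of finite-order mapping classes of the four-punctured sphere — that any isometry of the pillowcase that fixes two designated cone points and swaps the other two, and that (on the knotoid side) would have to square to the identity as a symmetry of the $\theta$-curve, is forced to act trivially; but acting trivially on $\partial\mathcal{O}$ forces $\overline{\rho}$ to be the identity on $\mathcal{O}$, hence $\rho$ to be isotopic to the identity on the $\theta$-curve \emph{rel} the labelling, contradicting that $k$ and $k_{\textbf{rot}}$ have genuinely swapped $e_\pm$ labels unless they were already equal — and a more careful bookkeeping shows this is impossible. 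Alternatively, and perhaps more cleanly, I would combine Theorem~\ref{thm:detectinvert} with the classification of periods: a rotatable hyperbolic knotoid would, via the established dictionary between knotoid symmetries and symmetries of $\theta(K,\tau)$ (Theorem~\ref{thm:thetaandinvertible} and Lemma~\ref{lemma:xxx}), force the existence of a symmetry of the strongly invertible knot that simultaneously realises the $e_\pm$-swap-with-vertices-fixed pattern; tracing through Sakuma--Kodama's analysis of which elements of $Sym(S^3,K)$ can induce which $\theta$-curve symmetries shows this pattern corresponds to a symmetry of $K$ that both preserves and reverses the orientation of $K$, which is absurd.

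The main obstacle I anticipate is the combinatorial/geometric bookkeeping on $\partial\mathcal{O}$: one must pin down precisely which cone points are fixed and which are swapped by a rotation (as opposed to a reversion or a reverse-rotation), and then rule out the existence of a Euclidean-pillowcase isometry with that fixed-point pattern that is compatible with squaring to the identity on the orbifold. This is exactly the step where rotatability differs from reversibility and from ``reverse of rotation'' — in those cases the analogous pillowcase symmetry does exist (giving cyclic, respectively free, period $2$), whereas for pure rotation it does not. So the heart of the argument is a short case analysis of involutions (or finite-order symmetries) of a sphere with four marked points, distinguishing the three non-trivial pairings of the marked points and showing that the one arising from rotation is incompatible with the orientation constraints coming from $S^3$ and from the strong inversion $\tau$. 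Once that incompatibility is isolated, the rest of the argument — passing between knotoids, $\theta$-curves, and orbifolds, and invoking Mostow rigidity plus the Orbifold Theorem — is exactly as in Proposition~\ref{lemma:xxx} and goes through verbatim.
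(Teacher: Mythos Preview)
Your main line of attack contains a concrete error in the description of $\partial\mathcal{O}$. The four cone points of the pillowcase are \emph{not} ``two from $e_0$ and two from $e_\pm$''. The singular locus of $\mathcal{O}$ is $e_-\cup e_+$ (the edge $e_0$ has been drilled out), and the four cone points are exactly where the two arcs $e_-\cap\mathcal{O}$ and $e_+\cap\mathcal{O}$ meet $\partial N(e_0)$: two points near $v_0$ (one on $e_-$, one on $e_+$) and two near $v_1$. A rotation swaps $e_-\leftrightarrow e_+$ while fixing $v_0,v_1$, so on the cone points it acts as two disjoint transpositions and fixes \emph{no} cone point. Such an action \emph{is} realised by an isometry of the Euclidean pillowcase (a $\pi$-rotation about an axis through the midpoints of two opposite seams), so your proposed contradiction at the level of $\partial\mathcal{O}$ does not materialise. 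The case analysis you sketch therefore cannot close the argument.

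The paper does start exactly as you do --- pass to the hyperbolic orbifold $\mathcal{O}$, straighten the induced self-map to an isometry $\overline{h}$, check $\overline{h}\neq\mathrm{id}$ and $\overline{h}^2=\mathrm{id}$ --- but then takes a genuinely different step that you are missing. Rather than seeking a contradiction on $\partial\mathcal{O}$, it \emph{lifts} $\overline{h}$ along the double branched cover $p$ to an isometry $\phi$ of $S^3\setminus\mathrm{int}(N(\gamma_S(k)))$ (possible because $\overline{h}_\ast$ preserves the index-two subgroup $p_\ast\pi_1$). One then checks, using a small geodesic arc in $\partial\mathcal{O}$ joining the two cone points near a single vertex, that $\phi$ preserves a meridian of $\gamma_S(k)$; since the rotation fixes the vertex labels, $\phi$ also preserves the orientation of $\gamma_S(k)$. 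Hence $\phi$ extends to a nontrivial involution of $S^3$ fixing $\gamma_S(k)$ pointwise, and the Smith Conjecture (the fixed-point set of such an involution is an unknot) gives the contradiction, because $\gamma_S(k)$ is hyperbolic and in particular nontrivial. Your alternative sketch via Sakuma--Kodama is pointing in this direction, but the stated contradiction (``both preserves and reverses the orientation of $K$'') is not the right one; the actual obstruction is Smith, and getting there requires the lifting step.
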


\begin{proof}
Associated to the hyperbolic knotoid $k$, there is the $\theta$-curve $t(k)$, which has three edges $e_0$, $e_-$ and $e_+$, and where $e_- \cup e_+$ is the preferred constituent unknot. By hypothesis, $\gamma_S(k)$ is a hyperbolic knot. As in the proof of Proposition \ref{lemma:xxx}, the involution on the hyperbolic manifold $S^3 \setminus \text{int}(N(\gamma_S(k)))$ is realised by a hyperbolic isometry $\tau$. The quotient $(S^3 \setminus \text{int}(N(\gamma_S(k)))/\tau$ is a hyperbolic orbifold $\mathcal{O}$. Its underlying space is the $3$-ball $S^3 \setminus \text{int}(N(e_0))$, and its singular locus is the intersection with $e_- \cup e_+$.
Let $k_{\textbf{rot}}$ be the knotoid obtained by rotating $k$. If we suppose that $k_{\textbf{rot}}$ and $k$ are equivalent, then by Lemma \ref{lemma:xxx} there is a homeomorphism of $S^3$ taking $t(k)$ back to itself that swaps the edges labelled $e_-$ and $e_+$ and leaves all other labels unchanged. This therefore induces a homeomorphism $h \colon \mathcal{O} \rightarrow \mathcal{O}$ that preserves the singular locus. It is homotopic to an isometry $\overline h \colon \mathcal{O} \rightarrow \mathcal{O}$. 

We claim that $\overline{h}$ is not equal to the identity, but that $(\overline{h})^2$ is the identity. First observe that $h$ swaps the two components of the singular locus. These are distinct geodesics in $\mathcal{O}$. Thus, the isometry $\overline{h}$ also swaps these two geodesics and therefore is not the identity. On the other hand, $h^2$ acts as the identity on $\partial N(t(k))$. Hence, the restriction of $(\overline{h})^2$ to $\partial \mathcal{O}$ is isotopic to the identity, via an isotopy that preserves the singular points of $\partial \mathcal{O}$. But any isometry of a Euclidean pillowcase orbifold that is isotopic to the identity via an isotopy preserving the singular points must be equal to the identity. So the restriction of the isometry $(\overline{h})^2$ to $\partial \mathcal{O}$ is the identity and hence $(\overline{h})^2$ is the identity on $\mathcal{O}$. By the double branched cover construction, we obtain $p \colon S^3 - {\rm int}(N(\gamma_S(k))) \rightarrow \mathcal{O}$ that is an orbifold covering map. This induces a homomorphism $p_\ast \colon \pi_1(S^3 - {\rm int}(N(\gamma_S(k))) \rightarrow \pi_1(\mathcal{O})$. The image of this homomorphism is an index $2$ subgroup of $\pi_1(O)$. This subgroup consists of those loops in $S^3 - {\rm int}(N(t(k)))$ that have even linking number with the unknot $e_- \cup e_+$.

Now $\overline{h}$ lifts to an isometry $\phi \colon S^3 \setminus \mathrm{int}(N(\gamma_S(k))) \rightarrow S^3 \setminus \mathrm{int}(N(\gamma_S(k)))$. This is because $\overline{h}_\ast \colon \pi_1(\mathcal{O}) \rightarrow \pi_1(\mathcal{O})$ preserves the subgroup $p_\ast \pi_1(S^3 - {\rm int}(N(\gamma_S(k)))$. This lift $\phi$ swaps the arcs $p^{-1}(e_- \cap \mathcal{O})$ and $p^{-1}(e_+ \cap \mathcal{O})$. It preserves each of the meridians of $\gamma_S(k)$ at the endpoints of these arcs. To see this, pick a small arc $\alpha$ in $\partial \mathcal{O}$ near one of the endpoints of $k$, joining a point of $e_- \cap \partial \mathcal{O}$ to a point of $e_+ \cap \partial \mathcal{O}$. This is preserved by $h$ and hence the Euclidean geodesic representative for $\alpha$ is preserved by $\overline{h}$. The inverse image of this geodesic in $S^3 \setminus \mathrm{int}(N(\gamma_S(k)))$ is therefore a meridian of $\gamma_S(k)$ that is preserved by $\phi$. Hence, $\phi$ extends to an involution of $S^3$ that fixes $\gamma_S(k)$. But no such symmetry exists, by the solution to the Smith Conjecture, since $\gamma_S(k)$ is a non-trivial knot.
\end{proof}

\begin{rmk}\label{rmk:onlyoneinvo}
Consider a hyperbolic knotoid $k$, and suppose that its lift $\gamma_S(k)$ admits simultaneously free and cyclic period $2$. Then, by Theorem \ref{thm:detectinvert}, $k$ is equivalent both to $-k$ and to $-k_{\textbf{rot}}$. This imply that $k$ is equivalent to its rotation $k_{\textbf{rot}}$, contradicting Theorem \ref{thm:rotmarc}. Thus, Theorem \ref{thm:rotmarc} and Theorem \ref{thm:detectinvert} imply that a strongly invertible hyperbolic knot $K$ can not admit simultaneously free and cyclic period $2$. 
We believe that this statement holds for hyperbolic knots in general, but we weren't able to find a reference.
\end{rmk}

Thus, we obtain the following improvement of Corollary \ref{cor:completo2}, dealing with hyperbolic knotoids.

\begin{cor}\label{atmost2}
Given any strongly invertible hyperbolic knot $K$ there are exactly $4$ distinct oriented knotoids associated to it. Moreover, one of the following holds.
\begin{itemize}

 \item If $K$ has cyclic period $2$, these are two inequivalent reversible knotoids $k^1$, $k^2$ and their rotations $k^1_{\textbf{rot}}$, $k^2_{\textbf{rot}}$;

 \item if $K$ has free period $2$, these are two inequivalent knotoids $k^1$, $k^2$ (each equivalent to the reverse of its rotation) and their reverses $-k^1$, $-k^2$;
 \item if $K$ does not have period $2$, these are a knotoid $k$, its reverse $-k$, its rotation $k_{\textbf{rot}}$ and its reverse rotation $-k_{\textbf{rot}}$.
\end{itemize}
 
\end{cor}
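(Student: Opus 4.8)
The plan is to combine Theorem \ref{thm:hyper} (Sakuma's result that a strongly invertible hyperbolic knot admits either $1$ or $2$ inequivalent strong inversions, and exactly $2$ iff it has cyclic or free period $2$), Theorem \ref{thm:completo} (the bijection $\gamma_S \colon \mathbb{K}(S^2)/_{\approx} \to \mathcal{K}SI(S^3)$), Theorem \ref{thm:detectinvert} (reversibility $\Leftrightarrow$ cyclic period $2$; equivalence to the reverse of the rotation $\Leftrightarrow$ free period $2$), and Theorem \ref{thm:rotmarc} (hyperbolic knotoids are never rotatable). First I would count the oriented knotoids lying over a fixed strongly invertible hyperbolic knot $K$. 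Each strong inversion $\tau$ on $K$ yields, via $\Pi = t_{\approx}^{-1} \circ \beta$, one class in $\mathbb{K}(S^2)/_{\approx}$, i.e.\ an orbit $\{k, -k, k_{\textbf{rot}}, -k_{\textbf{rot}}\}$ of oriented knotoids under reversion and rotation. The key point is that this orbit has exactly $4$ elements: if any two of $k, -k, k_{\textbf{rot}}, -k_{\textbf{rot}}$ coincided, then either $k$ would be rotatable (ruled out by Theorem \ref{thm:rotmarc}), or $k$ would simultaneously be reversible and equivalent to the reverse of its rotation (forcing $k = k_{\textbf{rot}}$, again contradicting Theorem \ref{thm:rotmarc} via Remark \ref{rmk:onlyoneinvo}). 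So each strong inversion contributes exactly $4$ oriented knotoids, and different strong inversions contribute disjoint sets because they give distinct classes in $\mathcal{K}SI(S^3)$, hence distinct classes in $\mathbb{K}(S^2)/_{\approx}$ under the bijection $\gamma_S$.

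Next I would split into the three cases according to the period structure of $K$. If $K$ has cyclic period $2$: by Theorem \ref{thm:hyper} there are exactly $2$ inequivalent strong inversions $\tau_1, \tau_2$, giving classes $[k^1], [k^2] \in \mathbb{K}(S^2)/_{\approx}$; by Theorem \ref{thm:detectinvert}, cyclic period $2$ means each $k^i$ is reversible, so $-k^i = k^i$ and the orbit of $k^i$ is $\{k^i, k^i_{\textbf{rot}}\}$ — but wait, the orbit must have $4$ elements, so in fact $k^i \neq k^i_{\textbf{rot}}$ and the $4$ oriented knotoids over $K$ are $k^1, k^1_{\textbf{rot}}, k^2, k^2_{\textbf{rot}}$, matching the first bullet. (Here I should be careful: the orbit of $k^i$ under reversion and rotation has $4$ elements only counting with multiplicity the group action; since $k^i$ is reversible, the set $\{k^i, -k^i, k^i_{\textbf{rot}}, -k^i_{\textbf{rot}}\} = \{k^i, k^i_{\textbf{rot}}\}$ has only $2$ distinct elements — so actually each of the two strong inversions contributes only $2$ distinct oriented knotoids, and $2 + 2 = 4$ total. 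I would phrase the counting carefully to account for this.) If $K$ has free period $2$: again exactly $2$ strong inversions; by Theorem \ref{thm:detectinvert} each $k^i$ is equivalent to the reverse of its rotation, so $-k^i_{\textbf{rot}} = k^i$ and the orbit $\{k^i, -k^i, k^i_{\textbf{rot}}, -k^i_{\textbf{rot}}\} = \{k^i, -k^i\}$ has $2$ distinct elements; total $2+2 = 4$, giving the second bullet. If $K$ has no period $2$: there is exactly $1$ strong inversion, $k$ is neither reversible nor equivalent to the reverse of its rotation, and (by Theorem \ref{thm:rotmarc}) not rotatable, so all four of $k, -k, k_{\textbf{rot}}, -k_{\textbf{rot}}$ are distinct, giving the third bullet.

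Finally I must check in the first two cases that $k^1 \not\sim k^2$ as oriented knotoids (and that the various listed knotoids within a case are genuinely distinct from each other across the two inversions). This follows because $[k^1] \neq [k^2]$ in $\mathbb{K}(S^2)/_{\approx}$ — that is exactly what the inequivalence of $\tau_1$ and $\tau_2$ in $\mathcal{K}SI(S^3)$ translates into under the bijection $\Pi$ — so no oriented knotoid in the $\tau_1$-orbit equals one in the $\tau_2$-orbit. Hence in the cyclic-period case the four knotoids $k^1, k^1_{\textbf{rot}}, k^2, k^2_{\textbf{rot}}$ are pairwise distinct, and likewise in the free-period case.

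The main obstacle, and the step deserving the most care, is the bookkeeping around orbit sizes: showing that every strongly invertible hyperbolic knot has \emph{exactly} $4$ oriented knotoids over it, not more and not fewer, in all three cases simultaneously. The ``not more'' direction rests on Theorem \ref{thm:rotmarc} and the identifications of Theorem \ref{thm:detectinvert} collapsing the $4$-element orbit to a $2$-element set precisely when there is period $2$ (and the surjectivity of $\gamma_S$ onto $\mathcal{K}SI(S^3)$); the ``not fewer'' direction uses that the reversion–rotation orbit of a hyperbolic knotoid is never smaller than forced, again by Theorem \ref{thm:rotmarc} and Remark \ref{rmk:onlyoneinvo}. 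Once the arithmetic $2+2 = 4$ and $1 \times 4 = 4$ is pinned down in each case, the rest is a direct translation through the bijections already established.
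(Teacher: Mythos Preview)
Your proposal is correct and follows essentially the same approach as the paper: both arguments split into the three cases of Theorem~\ref{thm:hyper}, use Theorem~\ref{thm:detectinvert} to collapse each reversion--rotation orbit to two elements in the period-$2$ cases, and invoke Theorem~\ref{thm:rotmarc} (together with the observation in Remark~\ref{rmk:onlyoneinvo}) to ensure the orbits do not collapse further, with disjointness of orbits across distinct strong inversions coming from the bijection of Theorem~\ref{thm:completo}. Your exposition is more discursive---including the mid-course correction about orbit sizes---but the substance and ingredients are identical to the paper's proof.
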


\begin{proof}
By Theorem \ref{thm:hyper}, $K$ has either $1$ or $2$ strong inversions up to equivalence. It has exactly $2$ if and only if $K$ has period $2$.

Suppose first that $K$ does not have period $2$. Then, up to reversion and rotation, it is associated with just one knotoid $k$. Moreover, by Theorems \ref{thm:detectinvert} and \ref{thm:rotmarc}, $k$, $-k$, $k_{rot}$ and $-k_{rot}$ are all distinct.

Suppose that $K$ has cyclic period $2$. Then, it is associated with two knotoids $k^1$ and $k^2$ that are distinct, even up to rotation and reversion. By Theorem \ref{thm:detectinvert}, $k^1$ is equivalent to $-k^1$. Hence, $k^1_{rot}$ is equivalent to $-k^1_{rot}$. By Theorem \ref{thm:rotmarc}, $k^1$ and $-k^1$ are distinct from $k^1_{rot}$ and $-k^1_{rot}$. Similar statements are true for $k^2$.

Finally, suppose that $K$ has free period $2$. Then again it is associated with two knotoids $k^1$ and $k^2$ that are distinct up to reversion and rotation. By Theorem \ref{thm:detectinvert}, $k^1$ is equivalent to $-k^1_{rot}$. Hence, $-k^1$ is equivalent to $k^1_{rot}$. By Theorem \ref{thm:rotmarc}, $k^1$ and $-k^1_{rot}$ are distinct from $-k^1$ and $k^1_{rot}$. Again, similar statements are true for $k^2$.
\end{proof}

In particular, the knotoids in Figure \ref{fig:8_20} are all not reversible. In fact, their images under the double branched cover construction are the knot $8_{20}$, which is hyperbolic with symmetry group isomorphic to the dihedral group $D_1$ (thus, it does not admit period $2$).

\begin{figure}[h]
\includegraphics[width=12cm]{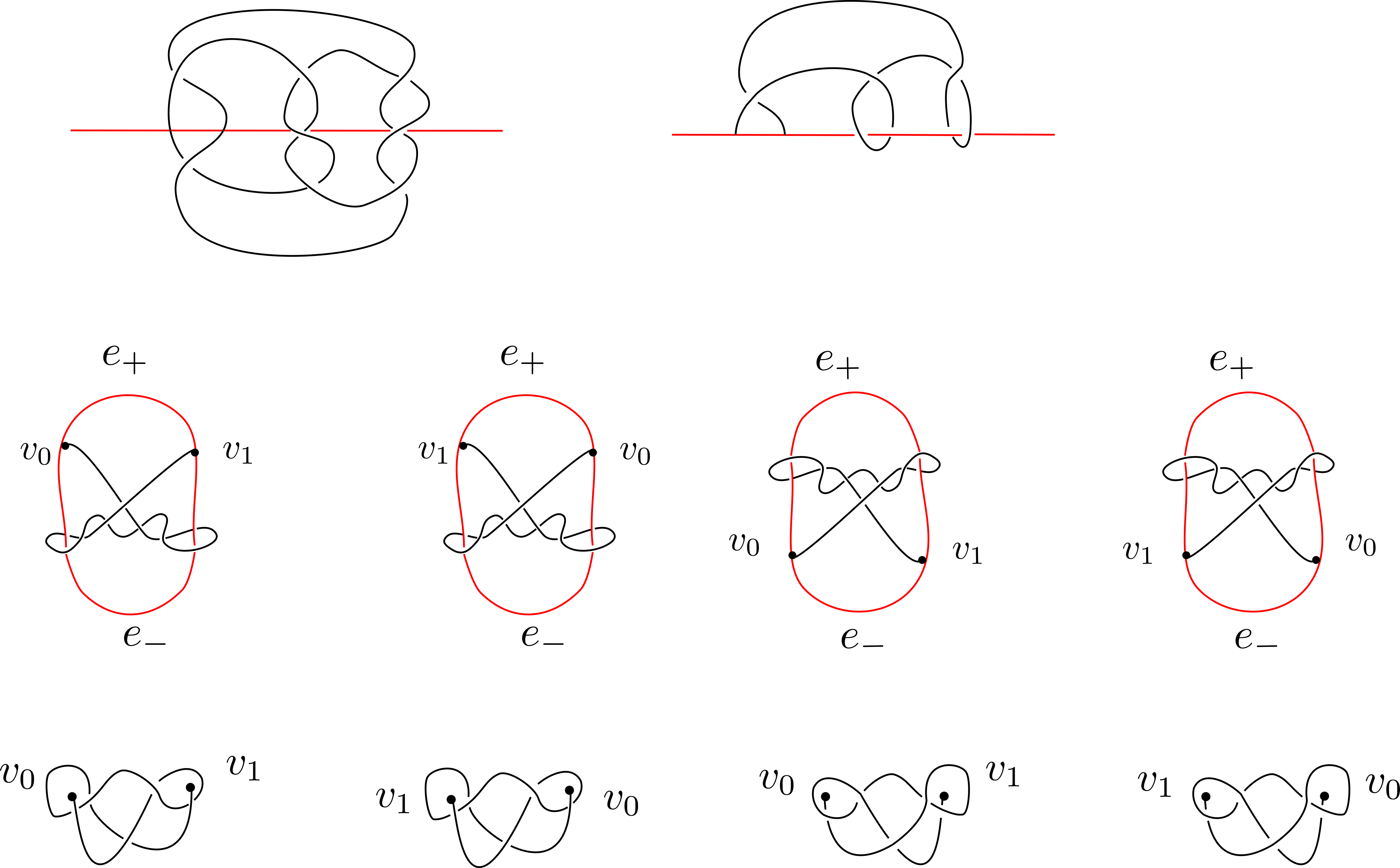}
\caption{There are $4$ oriented and not reversible knotoids associated to the knot $8_{20}$. These are related to each other by reversion and/or rotation.}
\label{fig:8_20}
\end{figure}

\subsection{Amphichiral strongly invertible knots}
It is possible to give a refinement of Corollary \ref{cor:completo2} in the case of amphichiral hyperbolic knots.

\begin{defi}\label{defi:amphi}
A knot $K$ is called \emph{amphichiral} if there exists an orientation-reversing homeomorphism of $S^3$ fixing the knot (setwise). Note that this implies that $K$ is equivalent to its mirror $K_m$.
\end{defi}

Consider an invertible, hyperbolic, amphichiral knot $K$, and suppose that it admits period $2$. From Theorem \ref{thm:hyper} it follows that $K$ admits two non-equivalent involutions $\tau_1$ and $\tau_2$. Let $\phi$ be the (isotopy class of) the orientation reversing homeomorphism of Definition \ref{defi:amphi}; from  [\cite{sakuma}, Proposition $3.4$], we know that $\tau_1$ and $\tau_2$ are conjugated through $\phi$ $$ \tau_2 = \phi \circ \tau_1 \circ \phi^{-1}$$

Thus,  $(K,\tau_1)$ is equivalent to $m(K,\tau_2)$, where $m(K,\tau_2)$ is the strongly invertible knot obtained from $(K,\tau_2)$ by reversing the orientation of $S^3$, and the following holds.

\begin{prop}\label{amphi}
Given an invertible, hyperbolic, amphichiral knot $K$ admitting period $2$, and let $\tau_1$ and $\tau_2$ be the two non-equivalent strong involutions of $K$. Then $\Pi(K,\tau_1)$ is the equivalence class in $\mathbb{K}(S^2)/_\approx$ containing the mirror images of the knotoids in $\Pi(K,\tau_2)$.
 
\end{prop}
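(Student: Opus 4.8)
The plan is to push the identity $\Pi = t_\approx^{-1}\circ\beta$ through both of its factors and to observe that on each factor the passage from $(K,\tau_2)$ to $(K,\tau_1)$ is realised by taking mirror images. By the discussion immediately preceding the statement — which is a consequence of [\cite{sakuma}, Proposition $3.4$] — the strongly invertible knot $(K,\tau_1)$ is equivalent to $m(K,\tau_2)$. Since $\beta$ takes equivalent strongly invertible knots to the same class in $\Theta^s/_\approx$, we have $\beta(K,\tau_1)=\beta(m(K,\tau_2))$, so it is enough to prove (a) that $\beta(m(K,\tau_2))$ is the mirror image of $\beta(K,\tau_2)$ in $\Theta^s/_\approx$, and (b) that $t_\approx^{-1}$ carries the mirror operation on $\Theta^s/_\approx$ to the mirror operation on $\mathbb{K}(S^2)/_\approx$. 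Granting these, $\Pi(K,\tau_1)=t_\approx^{-1}(\beta(K,\tau_1))$ is precisely the class of the mirror images of the knotoids constituting $t_\approx^{-1}(\beta(K,\tau_2))=\Pi(K,\tau_2)$, which is the assertion.

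For (a), recall that $\theta(K,\tau_2)=p(\mathrm{Fix}(\tau_2))\cup p(K)$, where $p\colon S^3\to S^3/\tau_2\cong S^3$ is the quotient map, viewed as the double cover of $S^3$ branched over $p(\mathrm{Fix}(\tau_2))$. By definition $m(K,\tau_2)$ is obtained from $(K,\tau_2)$ by reversing the orientation of the ambient $S^3$, and the quotient construction commutes with this reversal (the branched covering projection is equivariant with respect to orientation reversal of source and target). Hence $\theta(m(K,\tau_2))$ is literally $\theta(K,\tau_2)$ sitting inside $S^3$ with the opposite orientation, that is, its mirror image; and the resulting class in $\Theta^s/_\approx$ is unaffected by the edge- and vertex-relabelling ambiguities. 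This proves (a).

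Step (b) is the heart of the matter. Given $k\in\mathbb{K}(S^2)$, place a representative of $t_\approx(k)$ in standard position: $e_+$ in the upper half-space of $\mathbb{R}^3$, $e_-$ in the lower half-space, both projecting to a fixed arc $a$, and $e_0$ projecting to a diagram $D$ of $k$. Reflecting $\mathbb{R}^3$ in the horizontal plane fixes the arc $a$ and its projection, turns the projection of $e_0$ into the diagram $D$ with every crossing switched — a diagram of $k_m$ — and interchanges the two half-spaces, hence swaps the roles of $e_+$ and $e_-$. Reading off the knotoid from the reflected $\theta$-curve, after the relabelling $e_+\leftrightarrow e_-$ that restores standard position (this relabelling is the rotation move, hence invisible in $\mathbb{K}(S^2)/_\approx$), one obtains $k_m$ up to rotation. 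Since mirror reflection commutes with reversion and with rotation, it descends to a well-defined involution of $\mathbb{K}(S^2)/_\approx$; and since all orientation-reversing self-homeomorphisms of $S^3$ are isotopic, the mirror image of a $\theta$-curve, and hence the class obtained, is well defined. This establishes (b) and completes the proof. The main obstacle is exactly this last step: a reflection of $S^3$ does not respect the upper-edge/lower-edge decomposition used to reconstruct a knotoid from a $\theta$-curve in standard position, so the correspondence between mirroring $\theta$-curves and mirroring knotoids only becomes clean once the $e_\pm$-swap (i.e.\ the rotation move) has been quotiented away on both sides; everything else is formal naturality of the branched-cover construction.
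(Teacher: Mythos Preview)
Your argument is correct and follows the same route as the paper: the proposition is stated there as an immediate consequence of the identity $(K,\tau_1)\sim m(K,\tau_2)$ deduced from \cite{sakuma}, Proposition~$3.4$, and you have simply spelled out the step the paper leaves implicit, namely that the bijection $\Pi=t_\approx^{-1}\circ\beta$ intertwines mirror reflection on $\mathcal{K}SI(S^3)$ with mirror reflection on $\mathbb{K}(S^2)/_\approx$. Your handling of the $e_+\leftrightarrow e_-$ swap induced by the horizontal reflection is the only nontrivial point, and you resolve it correctly by noting that this is precisely the rotation already quotiented out in $\mathbb{K}(S^2)/_\approx$.
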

Note that Theorem \ref{thm:detectinvert} tells us that the oriented knotoids in the equivalence classes of $\Pi(K,\tau_1)$ and $\Pi(K,\tau_2)$ are all either reversible or equivalent to the reverse of their rotations, and Theorem \ref{amphi} tells us that each class contains the mirror reflections of the other.

\subsubsection{Example: $4_1$}\label{41}
We work out the case of the figure eight knot ($4_1$ in the Rolfsen table) as an example of Proposition \ref{amphi}. The $4_1$ knot is known to be hyperbolic, invertible, amphichiral and it admits period $2$; thus, it admits two distinct inversions $\tau_1$ and $\tau_2$, shown in the upper part of Figure \ref{fig:4_1}.

\begin{figure}[h]
\includegraphics[width=6cm]{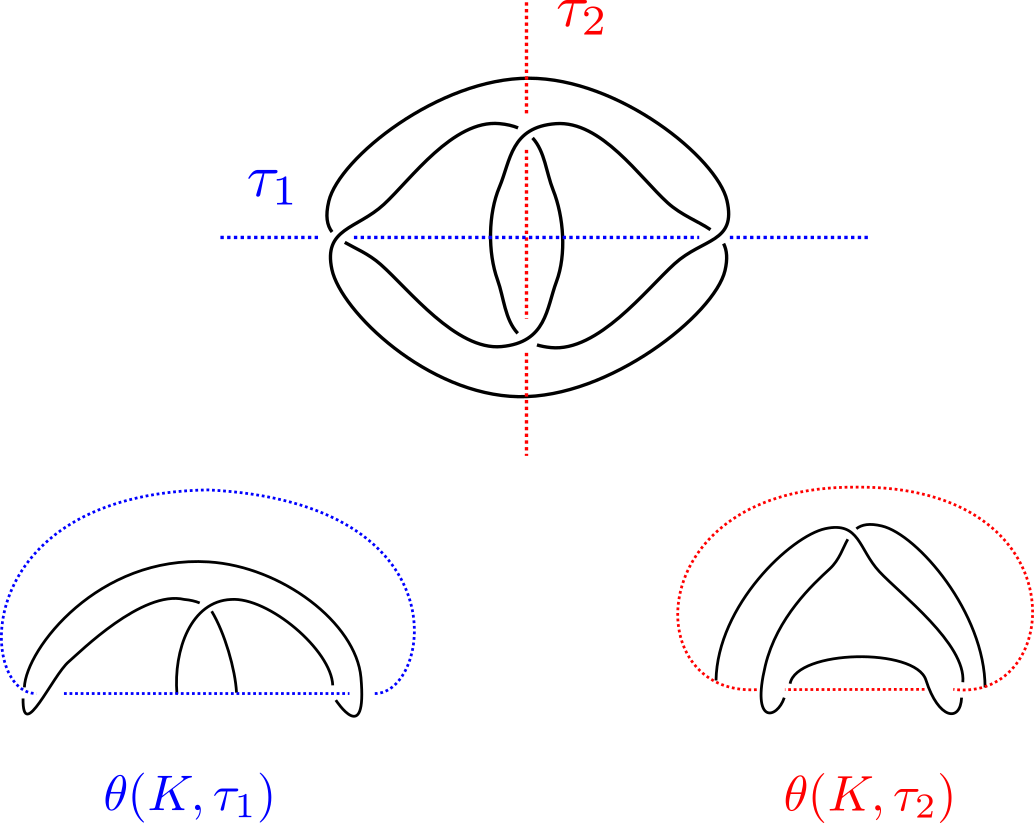}
\caption{On the top, a diagram for $4_1$ with the fixed point sets of $\tau_1$ and $\tau_2$ represented as straight lines. On the bottom, the $\theta$-curves $\theta(4_1,\tau_1)$ and $\theta(4_1,\tau_2)$.}
\label{fig:4_1}
\end{figure}

By considering the quotients under $\tau_1$ and $\tau_2$ we obtain two elements $\theta(4_1,\tau_1)$ and $\theta(4_1,\tau_2)$ of $\Theta^s/_{\approx}$, shown in the bottom of Figure \ref{fig:4_1}. Their constituent knots are two unknots and the torus knot $5_1$, and two unknots and the mirror image of $5_1$ respectively. Since it is well known that $5_1 \nsim m5_1$, it follows that $\theta(4_1,\tau_1) $ and $ \theta(4_1,\tau_2)$ represent different elements of $\Theta^s/_{\approx}$.

\begin{figure}[h]
\includegraphics[width=7cm]{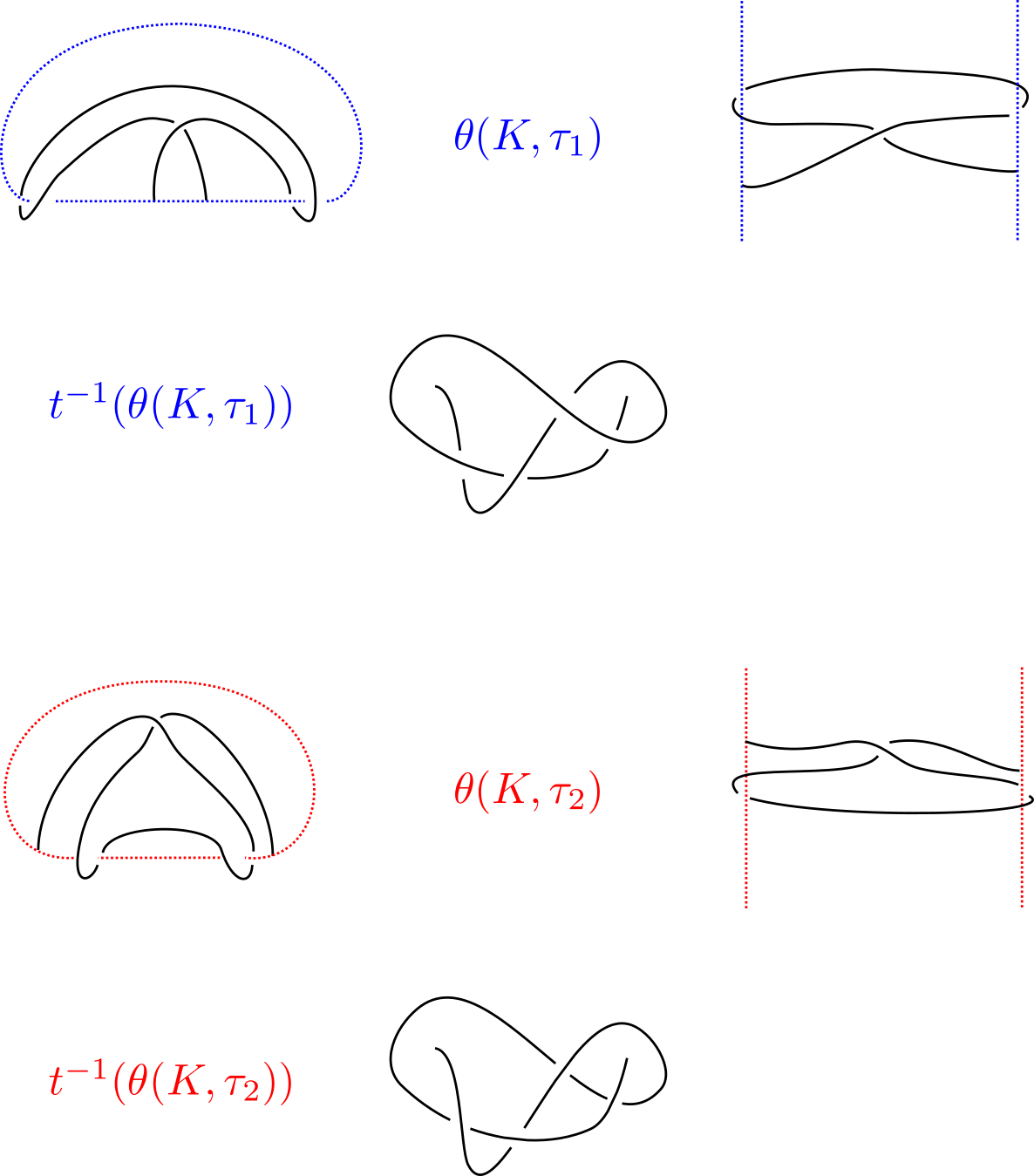}
\caption{On the top: from $\theta(4_1,\tau_1)$ to $\Pi(4_1,\tau_1)$. On the bottom: from $\theta(4_1,\tau_2)$ to $\Pi(4_1,\tau_2)$. The two knotoids are one the mirror image of the other.}
\label{fig:4_1rosso}
\end{figure}

In figure \ref{fig:4_1rosso} we show how to obtain two specific representatives of the equivalence classes $\Pi(4_1,\tau_1)$ and $\Pi(4_1,\tau_2)$. It is clear from the picture that these knotoids are one the mirror image of the other.

\subsection{Strongly invertible composite knots}\label{sec:quanteinvo}
As a corollary of Theorem \ref{thm:completo} and Corollary \ref{cor:completo3}, we have the following result dealing with strong involutions of composite knots.

\begin{prop}\label{prop:quanteinvocomposite}
Consider a knot $K$ isotopic to the connected sum of $\#_{i=1}^n K^i_h$, where $n \geq 2$ and every $K_h^i$ is a strongly invertible, hyperbolic knot. Suppose that these hyperbolic knots are pairwise distinct. Then, the number of non-equivalent strong involutions of $K$ is equal to $4^{n-1}(n!)$.
\end{prop}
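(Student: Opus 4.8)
The plan is to convert the statement into a counting problem about knotoids via Theorem~\ref{thm:completo}, and then exploit the rigidity of prime decomposition of knotoids (Theorem~\ref{unicadecomp}) together with the behaviour of $\gamma_S$ under multiplication (Theorem~\ref{thm:multandcon}). By Theorem~\ref{thm:completo} the set of strong involutions of $K$ up to equivalence is in bijection with the fibre $\gamma_S^{-1}(K)\subseteq\mathbb{K}(S^2)/_\approx$, so it suffices to count the elements of $\mathbb{K}(S^2)/_\approx$ lifting to $K=\#_{i=1}^nK_h^i$. I will count first at the level of \emph{oriented} knotoids in $\mathbb{K}(S^2)$, and then pass to the quotient.

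For the oriented count, let $k$ be an oriented knotoid with $\gamma_S(k)=K$ and write $k=q_1\cdots q_m$ as an ordered product of prime knotoids (Theorem~\ref{unicadecomp}); then $\gamma_S(k)=\gamma_S(q_1)\#\cdots\#\gamma_S(q_m)$ by Theorem~\ref{thm:multandcon}. If some $q_j$ were a knot-type knotoid, then $\gamma_S(q_j)=K'\#rK'$ by Proposition~\ref{liftknottype}; since each $K_h^i$ is \emph{strongly invertible}, hence invertible, and the $K_h^i$ are pairwise distinct, such a summand cannot occur inside $K$ (it would force a repeated prime summand), so every $q_j$ is a proper prime knotoid and $\gamma_S(q_j)$ is a prime knot. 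Unique factorisation of knots then gives $m=n$ and a permutation identifying $\{\gamma_S(q_1),\dots,\gamma_S(q_n)\}$ with $\{K_h^1,\dots,K_h^n\}$; and since no factor is knot-type, the relation $k\cdot k'=k'\cdot k$ of Theorem~\ref{unicadecomp} never applies, so the ordered product $q_1\cdots q_n$ is unique. Conversely every such ordered product lifts to $K$. By Corollary~\ref{cor:completo3} each $K_h^i$ has exactly $4$ oriented knotoids mapping to it under $\gamma_S$. Hence the oriented knotoids lifting to $K$ are in bijection with pairs consisting of a bijection assigning to each position $1,\dots,n$ one of the $K_h^i$ (there are $n!$ of these) together with a choice, for each position, of one of the $4$ oriented knotoids over the assigned knot; this gives $n!\cdot 4^n$ oriented knotoids.

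Finally I pass to $\mathbb{K}(S^2)/_\approx$. The group $\mathbb{Z}_2\times\mathbb{Z}_2$ generated by reversion $r\colon k\mapsto -k$ and rotation $\rho\colon k\mapsto k_{\textbf{rot}}$ acts on these $n!\cdot 4^n$ knotoids, and I claim this action is free when $n\ge 2$. Using the identities $-(q_1\cdots q_n)=-q_n\cdots -q_1$ from Section~\ref{miserve} and $(q_1\cdots q_n)_{\textbf{rot}}=q_{1\,\textbf{rot}}\cdots q_{n\,\textbf{rot}}$ (which follows from the description in Section~\ref{miserve2} of rotation as swapping the labels $e_-$ and $e_+$, compatibly with vertex multiplication), a fixed point of $\rho$ would force every $q_j$ to be rotatable, which is impossible because each $q_j$ is a hyperbolic knotoid (Theorem~\ref{thm:rotmarc}); a fixed point of $r$ or of $r\rho$ would force $\gamma_S(q_j)=\gamma_S(q_{n+1-j})$ for all $j$, i.e.\ the position-to-knot assignment is reversed by $j\mapsto n+1-j$, and for $n\ge 2$ this contradicts the $K_h^i$ being pairwise distinct (some $j\neq n+1-j$ exists). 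So the action is free, the number of $\approx$-classes equals $n!\cdot 4^n/4=4^{n-1}(n!)$, and by Theorem~\ref{thm:completo} this is the number of non-equivalent strong involutions of $K$.

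I expect the main obstacle to be the middle paragraph: establishing that the prime decomposition of $k$ is a rigid, length-$n$ ordered product of proper prime knotoids. This is precisely where the hypotheses are used — hyperbolicity and pairwise distinctness, and crucially strong invertibility of the $K_h^i$ (hence invertibility), which together rule out knot-type prime factors and make the freeness argument for the $\mathbb{Z}_2\times\mathbb{Z}_2$-action go through. The remaining bookkeeping is routine.
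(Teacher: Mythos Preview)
Your proof is correct and follows essentially the same route as the paper's: translate via Theorem~\ref{thm:completo} to counting classes in $\mathbb{K}(S^2)/_\approx$ over $K$, use Theorems~\ref{thm:multandcon} and~\ref{unicadecomp} together with pairwise distinctness to get a rigid ordered prime decomposition of length $n$ with no knot-type factors, invoke Corollary~\ref{cor:completo3} for the factor count $4^n\,n!$, and then divide by $4$ using the formulas for reversion and rotation of a product. Your justification that no prime factor is knot-type (via Proposition~\ref{liftknottype} and invertibility of the $K_h^i$) and your freeness argument for the $\mathbb{Z}_2\times\mathbb{Z}_2$-action (using Theorem~\ref{thm:rotmarc} for $\rho$ and the position-reversal contradiction for $r$ and $r\rho$) are in fact more explicit than what the paper writes, but the underlying argument is the same.
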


\begin{proof}
By Theorem \ref{thm:completo}, the number of different involutions of $K = \#_{i=1}^n K^i_h$ is equal to the number of elements of $\mathbb{K}(S^2)/_\approx$ whose image under $\gamma_S$ is a knot isotopic to $K$. Theorem \ref{thm:main} implies that every knotoid $k$ with $\gamma_S(k)=K$ is decomposable as $k=\cdot_{j=1}^n k_j$ where each $k_j $ belong to the equivalence class $\Pi(K^i_h, \tau) \in \mathbb{K}(S^2)/_\approx$ for a unique $(K_h^i, \tau)$. By hypothesis, the summands of $\#_{i=1}^n K^i_h$ are pairwise distinct, thus, none of the $k_j$ is a knot-type knotoid. Thus, Theorem \ref{unicadecomp} implies that this decomposition is unique in $\mathbb{K}(S^2)$. There are $n!$ ways to order the factors of $k=\cdot_{j=1}^n k_j$, and each arrangement corresponds to a different element in $\mathbb{K}(S^2)$. Moreover, by Corollary \ref{cor:completo3}, each $K^i_h$ is associated to exactly $4$ inequivalent oriented knotoids. Depending on whether or not $K^i_h$ has period $2$, these can be either 
\begin{itemize}
\item two inequivalent reversible knotoids $k^1_i$, $k^2_i$ and their rotations $k^1_{i\textbf{rot}}$, $k^2_{i\textbf{rot}}$, contained in the two classes $\Pi(K_h^i, \tau_1)$ and $\Pi(K_h^i, \tau_2)$;

\item two inequivalent knotoids $k^1_i$, $k^2_i$ (each equivalent to the reverse of its rotation) and their reverses $-k^1$, $-k^2$, contained in the two classes $\Pi(K_h^i, \tau_1)$ and $\Pi(K_h^i, \tau_2)$;

 \item a knotoid $k_i$, its reverse $-k_i$, its rotation $k_{i\textbf{rot}}$ and its reverse rotation $-k_{i\textbf{rot}}$, contained in $\Pi(K_h^i, \tau)$.
\end{itemize}
Choosing a different oriented knotoid associated to the same hyperbolic knot in the decomposition $k=\cdot_{j=1}^n k_j$ corresponds to creating a different element in $\mathbb{K}(S^2)$. Thus, there are a total of $4^n(n!)$ different composite knotoids in $\mathbb{K}(S^2)$ whose double branched cover is a knot isotopic to $K$. Since for every knotoid $k' = k'_1 \cdot k'_2 \dots k'_{m-1} \cdot k'_m$ it holds $k'_{\textbf{rot}} = k'_{1\textbf{rot}} \cdot k'_{2\textbf{rot}} \dots k'_{m-1\textbf{rot}} \cdot k'_{m\textbf{rot}}$ and $-k' = -k'_m \cdot -k'_{m-1} \dots -k'_2\cdot -k'_1$, by considering reversion and reflection on the composite knotoids, the claim follows.
\end{proof}

\section{On the map $\gamma_T$: an example}\label{sec:dimos}
It is often hard to distinguish non-equivalent planar knotoids which represent the same class in $\mathbb{K}(S^2)$. Important developments in this direction have been carried on in \cite{knotoID}, where polynomial invariants are used to detect the planar knotoid types associated to open polymers. In what follows, we show how we can efficiently use the map $\gamma_T$ to this end. Consider the pair of knotoids $k_1$ and $k_2$ in $\mathbb{K}(\mathbb{R}^2)$ of Figure \ref{fig:dimos}, on the top. They both represent the trivial knotoid in $\mathbb{K}(S^2)$.  

\begin{figure}[h]
\includegraphics[width=6cm]{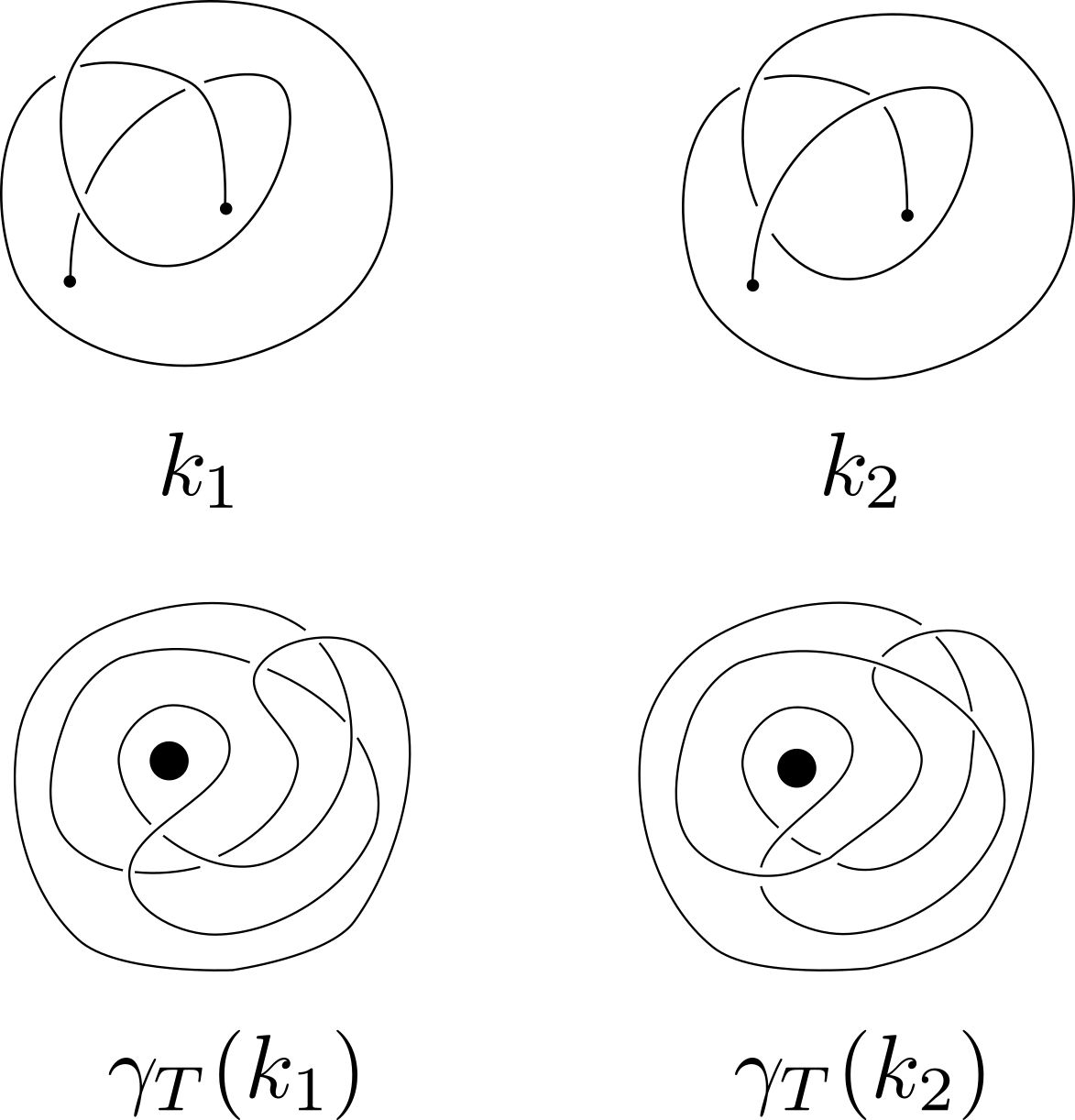}
\caption{On the top, the knotoids $k_1$ and $k_2$ in $\mathbb{K}(\mathbb{R}^2)$. On the bottom, their images under $\gamma_T$.}
\label{fig:dimos}
\end{figure}

The images of knotoids $k_1$ and $k_2$ under $\gamma_T$ are two knots in the solid torus. To distinguish them, we can consider the following construction. We can embed the solid torus in $S^3$ as done in Section \ref{miserve}, but this time after giving a full twist along the meridian of $S^1 \times D^2$. We then obtain two knots in $S^3$, shown in Figure \ref{fig:dimostoro}, that can be easily shown to be the knots $9_{46}$ and $5_2$ by computing the Alexander and Jones polynomials. These invariants are in fact enough to distinguish them since, according to knotinfo \cite{knotinfo}, the knots $5_2$ and $9_{46}$ are uniquely determined by their Alexander and Jones polynomials among all knots up to $12$ crossings. Note that this procedure may be applied to several similar cases, highlighting the power of the map $\gamma_T$. We emphasise that the authors are not aware of any other method other than using $\gamma_T$ that is capable of distinguishing $k_1$ and $k_2$. This example was kindly suggested by Dimos Goundaroulis.

\begin{figure}[h]
\includegraphics[width=6cm]{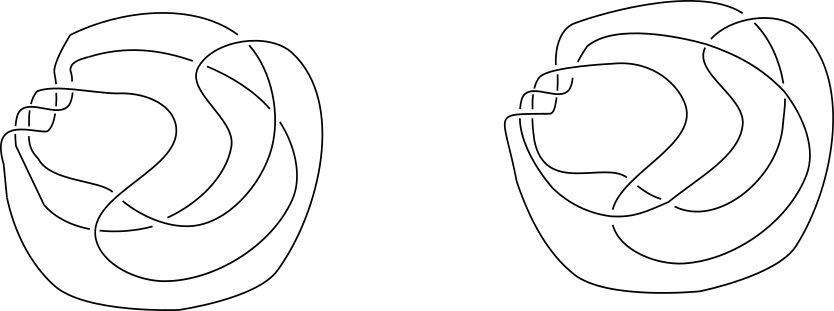}
\caption{By embedding the solid torus in $S^3$ as in Section \ref{miserve} after giving a full twist along the meridian, we obtain a pair of knots in $S^3$.}
\label{fig:dimostoro}
\end{figure}

\begin{rmk}
As mentioned in Section \ref{miserve}, after applying the map $\gamma_T$ one could directly compare the resulting knots in the solid torus by using invariants for knots in the solid torus, see \emph{e.g.}~\cite{sophia}, \cite{maciei} and \cite{hoste}.
Alternatively, one could also consider the two-component link $L$ in $S^3$ obtained as the union of $\gamma_T(k)$ with the meridian of the solid torus $S^1 \times D^2$. 
 
\end{rmk}

\section{Gauss Code and Computations}\label{sec:gauss}
The oriented Gauss code $GC(D)$ for a knot diagram $D$ is a pair $(C,S)$, where $C$ is a $2n$-tuple and $S$ an $n$-tuple, $n$ being the number of crossings of the diagram. Given a diagram $D$, $GC(D)$ is constructed as follows: assign a number between $1$ and $n$ to each crossing, and pick a point $a$ in the diagram, which is not a double point. Start \emph{walking} along the diagram from $a$, following the orientation, and record every crossing encountered (in order) by adding an entry to $C$ consisting of the corresponding number, together with a sign $+$ for overpassing and $-$ for underpassing, until you reach the starting point $a$ again. Note that each crossing is encountered twice. $S$ is the $n$-tuple whose $i$th entry is equal to $1$ if the $i$th crossing is positive and $-1$ otherwise. As an example, the Gauss code associated to the diagram in Figure \ref{fig:trifogliocode} is equal to: $$ GC(D)= ((1,-2,3,-1,2,-3),(  1,1,1)) $$ 

\begin{figure}[h]
\includegraphics[width=3cm]{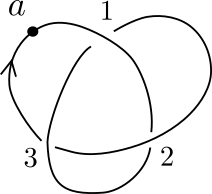}
\caption{Computing the Gauss code for a knot diagram.}
\label{fig:trifogliocode}
\end{figure}

 Gauss codes can be easily extended to knotoid diagrams, see \cite{knotoids}. The procedure is basically the same, but in this case the starting point $a$ coincides with the tail of the diagram. As an example, the Gauss code for the knotoid in Figure \ref{fig:knotoidcode} is equal to:  $$GC(D) = ((-1, 1,2,-3,-2,3), ( 1,1,1)) $$

\begin{figure}[h]
\includegraphics[width=3cm]{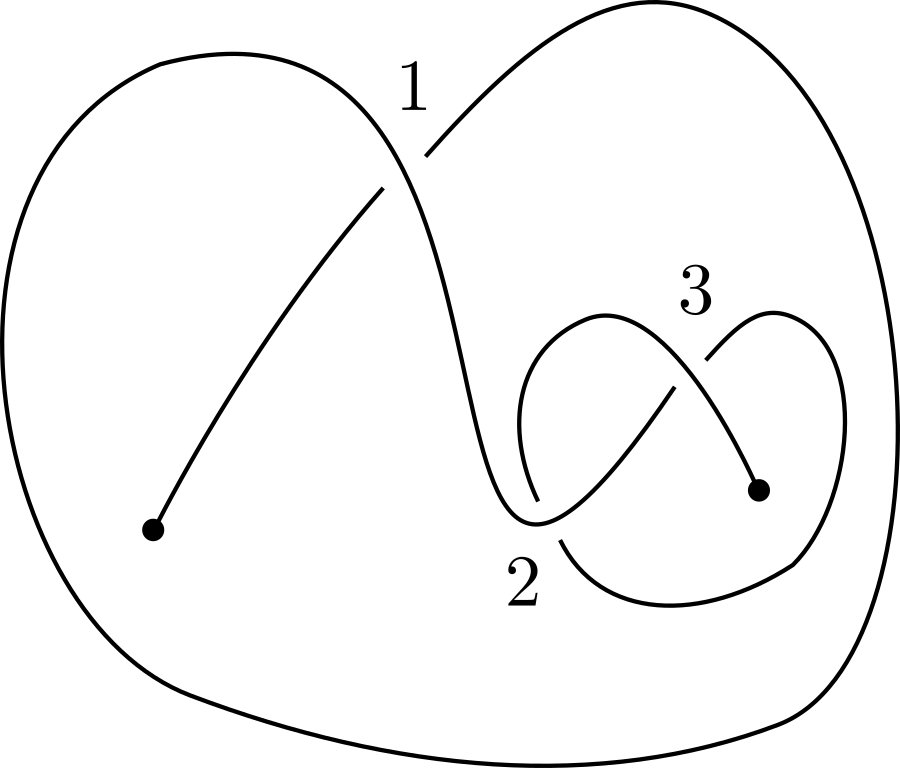}
\caption{Computing the Gauss code for a knotoid.}
\label{fig:knotoidcode}
\end{figure}

The information encoded in $GC(D)$ is enough to reconstruct $D$, both in the case of knot and knotoid diagrams.

\subsection{Generalised Gauss code for knotoids}
Gauss code for knotoid diagrams may be generalised to contain also the information about the intersection with the branching set. We will call the \emph{generalised Gauss code} (indicated as $gGC(D)$) the pair $(C,S)$ where $S$ is the same as in $GC(D)$, while $C$ contains also entries equal to $b$ every time every time the diagram intersects with the arcs that connect the branched points (\emph{i.e.} the endpoints) with the boundary of the disk containing the diagram. For instance, the Gauss code for the knotoid in Figure \ref{fig:knotoidcodebr} is: $$-1,b,b, 1,2,-3,b,-2,3 / 1,1,1 $$

\begin{figure}[h]
\includegraphics[width=3cm]{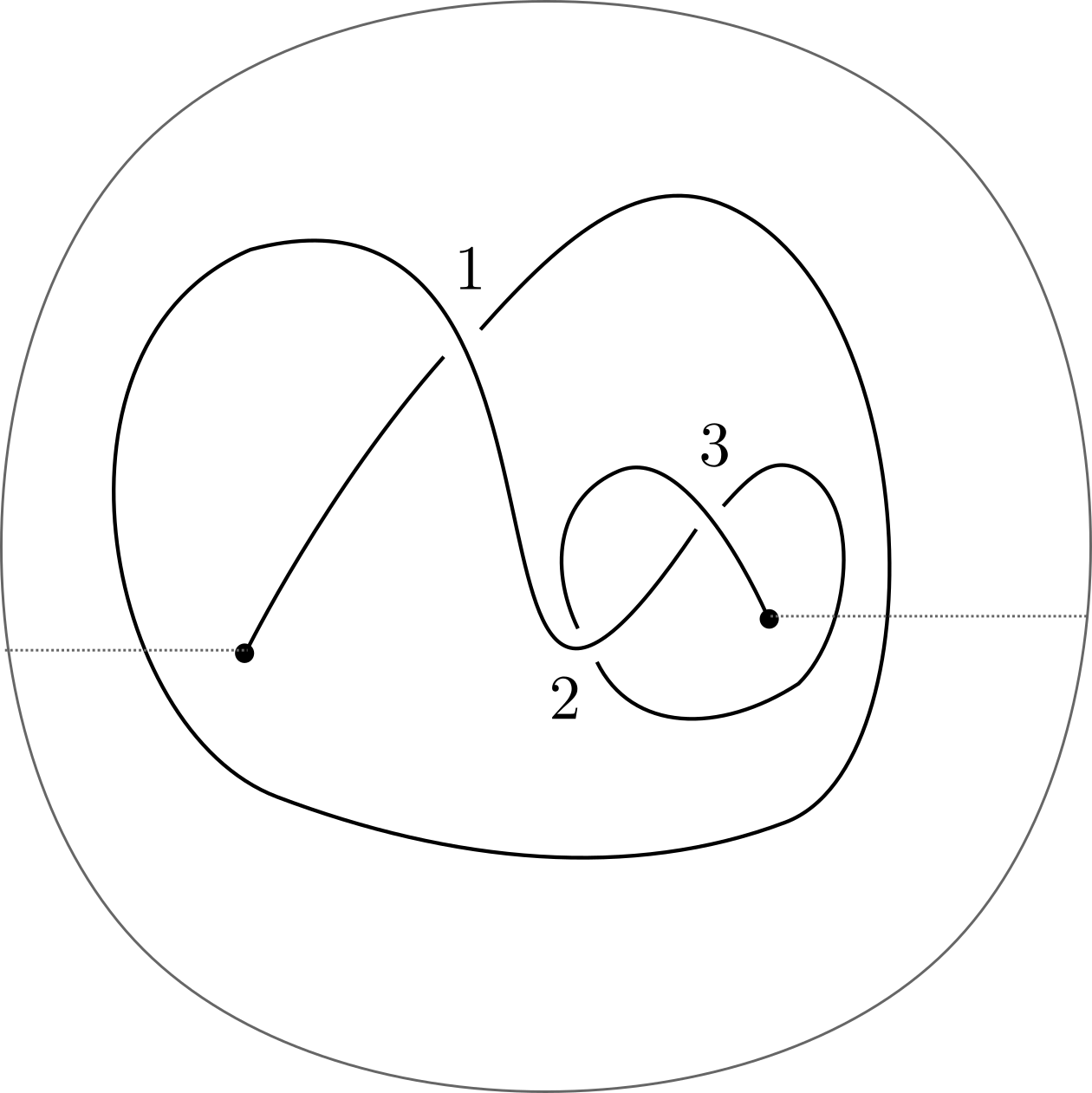}
\caption{Computing the generalised Gauss code for a knotoid.}
\label{fig:knotoidcodebr}
\end{figure}

\subsection{Gauss code for the lifts} \label{gclift}
Given a diagram $D$ representing a knotoid $k$ with $gGC(D) = (C,S)$ it is possible to compute $GC(\gamma_S(D))$, where $\gamma_S(D)$ is the diagram representing $\gamma_S(k)$ obtained with the ``cuts'' technique, as in Figure \ref{rivestimento}. 

\begin{figure}[h]
\includegraphics[width=7cm]{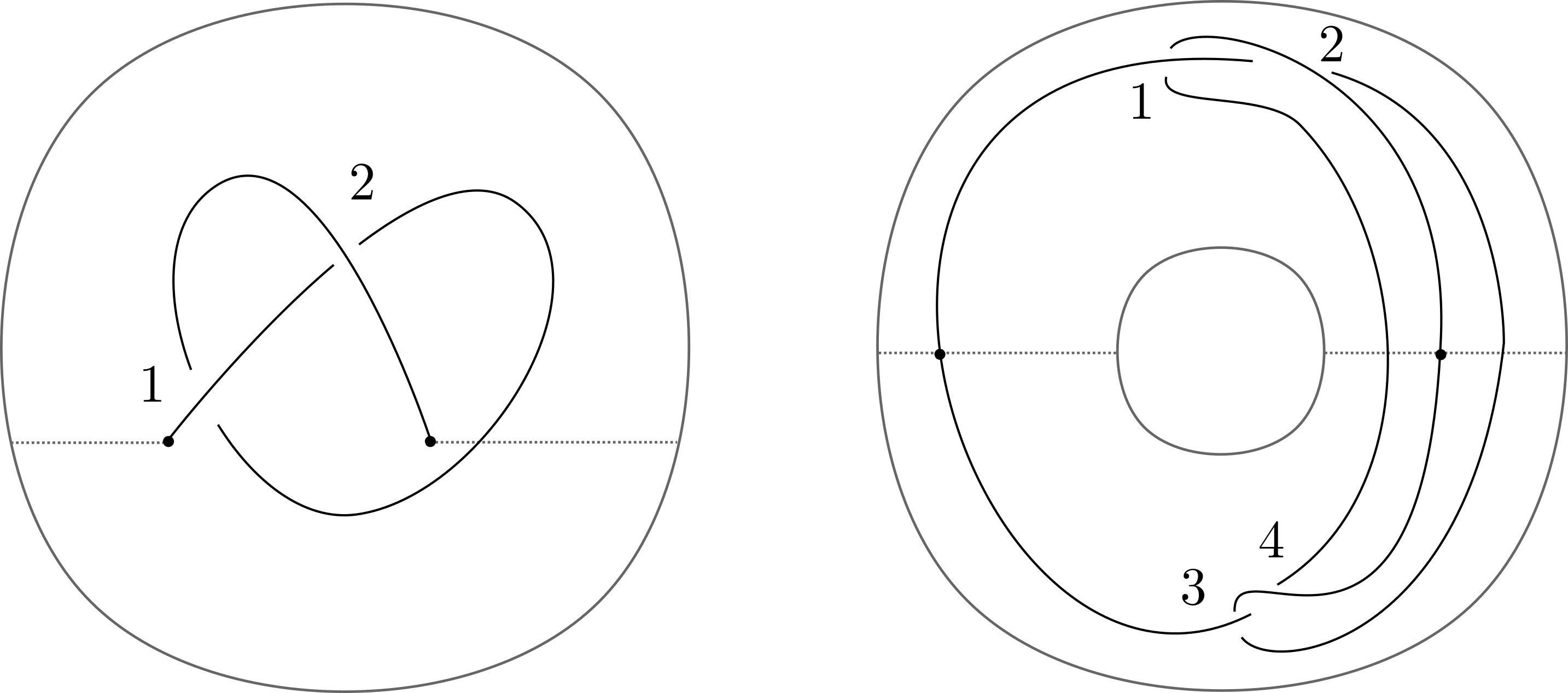}
\caption{Computing the Gauss code for the lift: an example.}
\label{fig:esempiogc}
\end{figure}

Consider the knotoid diagram $D$ on the left-side of Figure \ref{fig:esempiogc} with $gGC(D)=(( 1,-2,b,-1,2),(1,1) $. Label the crossings in $\gamma_S(D)$ as shown on the right-side of Figure \ref{fig:esempiogc}: every half of the annulus is a copy of the disk in which $D$ lies, keep the same enumeration on the top-half and increase by $2$ the labels in the bottom one.
Now, start computing $GC(\gamma_S(D))$: notice that until we reach an intersection between the diagram and one of the arcs splitting the annulus, the entries added in $GC(\gamma_S(D))$ are equal to the first entries in $gGC(D)$. After an intersection point, the path continues on the bottom half of the annulus, and the next entries added in $GC(\gamma_S(D))$ are equal to the corresponding ones in $gGC(D)$, but with every label increased by $2$. Once we reach the lift of the head, the path along the knot continues, and it is the same path we have just done, but in the opposite direction and on opposite halves of the annulus. Thus, the last entries added are a copy of the entries written so far, added in the opposite order and with labels corresponding to opposite halves of the annulus and thus: $$GC(\gamma_S(D)) = ((1,-2,-3,4,2,-1,-4,3), S)$$

To compute $S$, note that the sign of a crossing in the top-half is the same as its corresponding crossing in the bottom-half. Moreover, since the labels corresponding to each crossings in $gGC(D)$ appear once before the entry $b$ and once after, the signs of the first two crossings in the knot diagram are changed, and the complete Gauss code is $$GC(\gamma_S(D)) = ((1,-2,-3,4,2,-1,-4,3),(-1,-1,-1))$$

\begin{figure}[h]
\includegraphics[width=4cm]{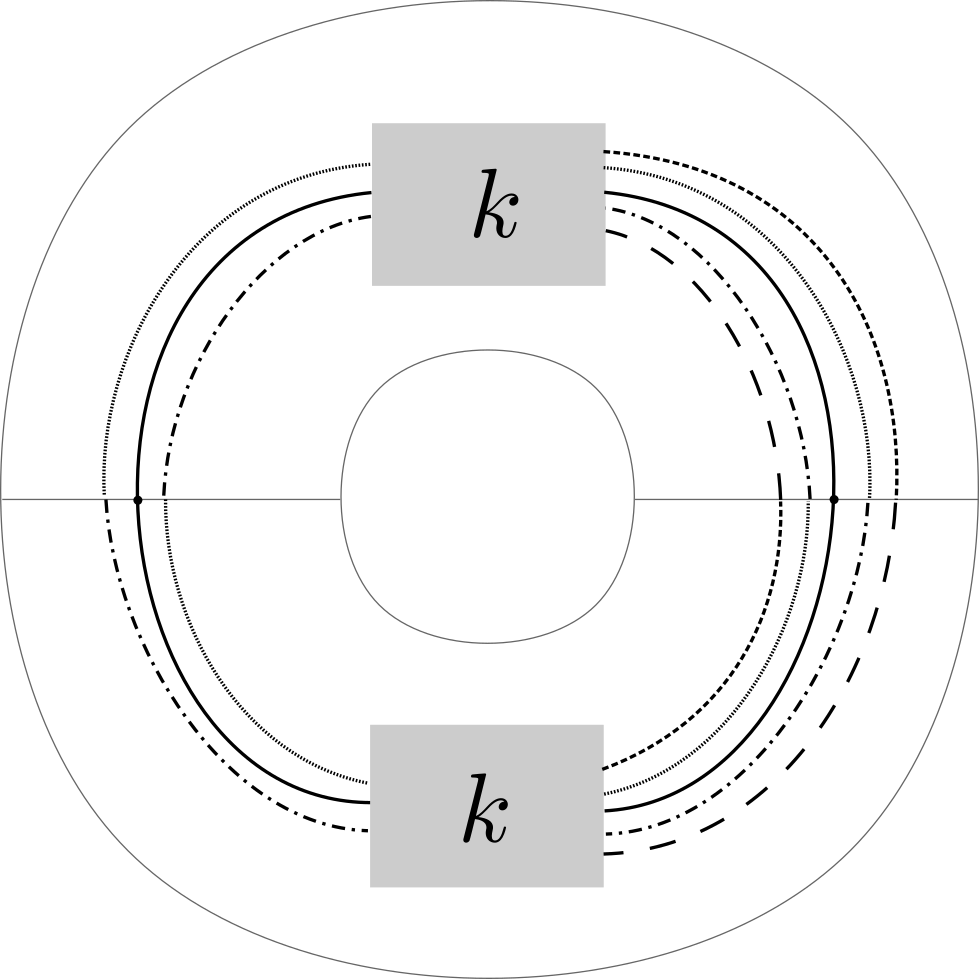}
\caption{Computing the Gauss code of the lift $\gamma_S(D)$.}
\label{fig:exgclift}
\end{figure}

The previous procedure can be generalised to produce an algorithm. Thus, consider the diagram in Figure \ref{fig:exgclift}, and start walking along the knot from the lift of the tail. Every time we pass from one half of the annulus to the other, the path on the diagram follows as in the knotoid diagram, but on a different half. Moreover, as before, once we reach the lift of the head of the knotoid, the path proceeds as the one just traced, in the opposite direction and on different halves as before. Now, suppose that on $gGC(D)$ the two appearances of the same label happen without the occurrence of a $b$ entry between them. This means that in $\gamma_S(D)$ we are going to reach the top-lift of the crossing twice without passing to the other half (thus, without swapping the orientation), and the same holds for the bottom-lift of the crossings. In this case the signs of both the lifted crossings in $\gamma_S(D)$ are equal to the sign of the corresponding one in $D$.
Putting everything together, we obtain the following algorithm, that can be easily implemented.\\

\textbf{Input}: generalised Gauss code of the knotoid, \textbf{n} = number of crossings in the knotoid diagram;

\begin{itemize}
 \item  go through the knotoid code: copy the entries until you find a $b$;
 \item  until you reach the point corresponding to the head of the knotoid: after reaching a $b$
\begin{itemize}
  \item if the number of $b$-entries encountered is odd, add entries equals to the knotoid ones, but changing the labels by adding $n$ to them. Do that until you reach another $b$; 
  \item if the number of $b$-entries encountered is even, add entries equals to the knotoid ones. Do that until you reach another $b$.
\end{itemize}

 \item After reaching the head: copy the entries added so far, starting from the last one, and changing the labels by subtracting $n$ if they are greater than $n$, and adding $n$ otherwise;
 \item Consider the $k$ crossing in the knotoid diagram:
 \begin{itemize}
  \item if the corresponding labels in the knotoid code appear twice with an even (or zero) number of $b$-entries between them, then the sign of the $k$ and $k+n$ crossings in the knot diagram are equal to the sign of the starting crossing;
  \item if the corresponding labels in the knotoid code appear twice with an odd number of $b$-entries between them, then the sign of the $k$ and $k+n$ crossings in the knot diagram are opposite to the sign of the starting crossing.
 \end{itemize}

\end{itemize}

\textbf{Output}: Gauss code for the lifted knot diagram.

\bibliographystyle{amsplain}

\begin{thebibliography}{10}

\bibitem{band1}
Abe, T. and Kanenobu, T. \emph{Unoriented band surgery on knots and links.} arXiv preprint arXiv:1112.2449 (2011).

\bibitem{tableknotoids}
Bartholomew, A. \emph{Knotoids} http://www.layer8.co.uk/maths/knotoids/, September 17, 2018.

\bibitem{BoileauLeebPorti}
Boileau, M., Leeb, B., and Porti, J. \emph{Geometrization of 3-dimensional orbifolds.} Annals of mathematics, 195-290, (2005).

\bibitem{theta}
Calcut, J. S. and Metcalf-Burton, J. R. \emph{Double branched covers of theta-curves.} Journal of Knot Theory and Its Ramifications 25.08 (2016).

\bibitem{knotinfo}
Cha, J. C. and Livingston, C. \emph{KnotInfo: Table of Knot Invariants}, http://www.indiana.edu/$\sim$knotinfo, September 17, 2018.

\bibitem{long}
Chumutov, S., Duzhin, S. and Mostovoy, J. \emph{Introduction to Vassiliev Knot Invariants} Cambridge
University Press, Cambridge, (2012).

\bibitem{conemanifolds}
Cooper, D., Hodgson, C. D., and Kerckhoff, S. \emph{Three-dimensional orbifolds and cone-manifolds} (Vol. 5). Mathematical society of Japan (2000).

\bibitem{knotoID}
Dorier, Julien, et al. \emph{Knoto-ID: a tool to study the entanglement of open protein chains using the concept of knotoids.} Bioinformatics 1 (2018).

\bibitem{inv1}
Dugger, D. \emph{Involutions on surfaces.} arXiv preprint arXiv:1612.08489 (2016).

\bibitem{dinni}
Dynnikov, I. A. \emph{Recognition algorithms in knot theory.} Russian Mathematical Surveys 58.6 (2003).


\bibitem{farb}
Farb, B. and Margalit, D. \emph{A primer on mapping class groups.} Princeton University Press, 2011.

\bibitem{fomenko}
Fomenko, A. T. and Matveev, S. V. \emph{Algorithmic and computer methods for three-manifolds} (Vol. 425). Springer Science and Business Media, (2013).

\bibitem{gabai}
Gabai, D. and Thurston, W. P. \emph{Genera of Arborescent Links: 1986}. Vol. 339. American Mathematical Soc., (1986).

\bibitem{maciei}
Gabrovsek, B. and Mroczkowski, M. \emph{Knots in the solid torus up to 6 crossings.} Journal of Knot Theory and Its Ramifications 21.11 (2012).


\bibitem{knotoids}
G\"ug\"umc\"u, N. and Kauffman, L. H. \emph{New invariants of knotoids.}, European Journal of Combinatorics, 65, 186-229. (2017).

\bibitem{haken}
Haken, W. \emph{Theorie der Normalflachen} Acta. Math. 105 (1961), 245-375. MR 25:4519a

\bibitem{henri}
Henry, S. R. and Weeks, J. R. \emph{Symmetry groups of hyperbolic knots and links.} Journal of Knot Theory and Its Ramifications 1.02 (1992): 185-201.

\bibitem{solidtorus}
Holzmann, W. H. \emph{Involutions with 1-or 2-dimensional fixed point sets on orientable torus bundles over a 1-sphere and on unions of orientable twisted I-bundles over a Klein bottle.} Diss. University of British Columbia, 1984.

\bibitem{hoste}
Hoste, J., and Kidwell, M. E. \emph{Dichromatic link invariants.} Transactions of the American Mathematical Society, 321(1), 197-229 (1990).

\bibitem{band2}
Hoste, J., Nakanishi, Y. and Taniyama, K. \emph{Unknotting operations involving trivial tangles.} Osaka Journal of Mathematics 27.3 (1990).



\bibitem{inv2}
Kim, P. K. and Tollefson, J. L. \emph{PL involutions of fibered 3-manifolds.} Transactions of the American Mathematical Society 232 (1977).


\bibitem{inv3}
Klein, F. \emph{{\"U}ber Realit{\"a}tsverh{\"a}ltnisse bei der einem beliebigen Geschlechte zugeh{\"o}rigen Normalkurve der $\varphi$} Springer, Berlin, Heidelberg, 1922.

\bibitem{kodama}
Kodama, K., and Sakuma, M. \emph{Symmetry groups of prime knots up to 10 crossings.} Knots. Vol. 90. de Gruyter, 1992.

\bibitem{railknotoids}
Kodokostas, D., and Lambropoulou, S. \emph{Rail knotoids.} arXiv preprint arXiv:1812.09493 (2018).


\bibitem{finite}
Kojima, S. \emph{Finiteness of symmetries on 3-manifolds.}, Transformation Groups and Representation Theory Pages 1-5 (1983).



\bibitem{knotoidstorus}
Korablev, P. G., and May, Y. K. \emph{Knotoids and knots in the thickened torus.} Siberian Mathematical Journal 58.5 (2017).

\bibitem{kupe}
Kuperberg, G. \emph{Algorithmic homeomorphism of 3-manifolds as a corollary of geometrization.} arXiv preprint arXiv:1508.06720 (2015).


\bibitem{Lackenby}
Lackenby, M. \emph{Elementary knot theory.} arXiv preprint arXiv:1604.03778 (2016).
\bibitem{Lackenbynew}
Lackenby, M. \emph{Links with splitting number one.} arXiv preprint arXiv:1808.05495 (2018).

\bibitem{sophia}
Lambropoulou, S. \emph{Knot theory related to generalized and cyclotomic Hecke algebras of type B} J Knot Theory Ramif. (1999)

\bibitem{mutation2}
Lickorish, WB. R. and Millett, K.C. \emph{A polynomial invariant of oriented links.} Topology. 1987.



\bibitem{mutation1}
Lickorish, WB. R. \emph{Polynomials for links.} Bulletin of the London Mathematical Society 20.6 (1988).

\bibitem{manning}
Manning, J. F. \emph{Algorithmic detection and description of hyperbolic structures on closed 3-manifolds with solvable word problem.} Geometry and Topology 6.1 (2002).

\bibitem{moriuchi}
Moriuchi, H. \emph{An enumeration of theta-curves with up to seven crossings.} Journal of Knot Theory and Its Ramifications 18.02 (2009).

\bibitem{rolfsen}
Rolfsen, D. \emph{Knots and links.} Vol. 346. American Mathematical Soc., 1976.

\bibitem{sakuma}
Sakuma, M. \emph{On strongly invertible knots.} Algebraic and Topological Theories. Papers from the Symposium Dedicated to the Memory of Dr. Takehiko Miyata (Kinosaki, 1984), Kinokuniya Company Ltd., Tokyo. 1986.

\bibitem{scott}
Scott, P. \emph{The geometries of 3-manifolds.} Bulletin of the London Mathematical Society 15.5 (1983): 401-487.

\bibitem{theta2}
Taniyama, K. \emph{Cobordism of theta curves in $S^3$.} Mathematical Proceedings of the Cambridge Philosophical Society. Vol. 113. No. 1. Cambridge University Press, 1993.


\bibitem{abbi}
Thompson, A. \emph{Algorithmic recognition of 3-manifolds.} Bulletin of the American Mathematical Society 35.1 (1998).

\bibitem{turaev}
Turaev, V. \emph{Knotoids.} Osaka Journal of Mathematics 49.1 (2012).

\bibitem{wald}
Waldhausen, F. \emph{Uber Involutionen der 3-Sphare}, Topology (1969), 81-92. MR 38 5209.


\bibitem{liam}
Watson, L. \emph{Khovanov homology and the symmetry group of a knot.} Advances in Mathematics 313 (2017).







\end{thebibliography}

\end{document}